\documentclass[12pt]{amsart}
\usepackage{graphicx}
\vfuzz2pt 
\hfuzz2pt 
\newtheorem{thm}{Theorem}[section]
\newtheorem{cor}[thm]{Corollary}
\newtheorem{lem}[thm]{Lemma}
\newtheorem{prop}[thm]{Proposition}
\theoremstyle{definition}

\theoremstyle{remark}

\numberwithin{equation}{section}

\begin{document}

\title[Balanced Metrics and Chow Stability]{Balanced Metrics and Chow Stability of Projective Bundles over K\"ahler Manifolds II}%
\author{REZA SEYYEDALI}%
\address{UCI, Department of Mathematics}%
\email{rseyyeda@math.uci.edu}%

\thanks{}%
\subjclass{}%
\keywords{}%

\date{April 20, 2011}

\begin{abstract}

In the previous article (\cite{S}), we proved that  slope stability of  a holomorphic vector bundle $E$ over a polarized manifold $(X,L)$ implies  Chow stability  of $(\mathbb{P}E^*,\mathcal{O}_{\mathbb{P}E^*}(1)\otimes \pi^* L^k)$  for $k \gg 0$ if the base manifold has no nontrivial holomorphic vector field and admits a constant scalar curvature metric in the class of  $2\pi  c_{1}(L)$. In this article using asymptotic expansions of Bergman kernel on  $\textrm{Sym}^d E$, we generalize the main theorem of \cite{S} to polarizations   $(\mathbb{P}E^*,\mathcal{O}_{\mathbb{P}E^*}(d)\otimes \pi^* L^k)$ for $k \gg 0$, where $d$ is a positive integer.

\end{abstract}
\maketitle
\pagenumbering{arabic}

\newtheorem*{Mthm}{Main Theorem}
\newtheorem{Thm}{Theorem}[section]
\newtheorem{Prop}[Thm]{Proposition}
\newtheorem{Lem}[Thm]{Lemma}
\newtheorem{Cor}[Thm]{Corollary}
\newtheorem{Def}[Thm]{Definition}
\newtheorem{Guess}[Thm]{Conjecture}
\newtheorem{Ex}[Thm]{Example}
\newtheorem{Rmk}{Remark}
\newtheorem{Not}{Notation}
\def\thesection{\arabic{section}}
\renewcommand{\theThm} {\thesection.\arabic{Thm}}

\section{Introduction}

In \cite{S}, we prove that if  a holomorphic vector bundle  $E$ over a polarized
algebraic manifold  $(X,L)$  is  Mumford stable and if $(X,L)$ admits a constant
scalar curvature metric and has discrete automorphism group, then $(\mathbb{P}E^*,\mathcal{O}_{\mathbb{P}E^*}(1)\otimes \pi^* L^k)$
is Chow stable for $k \gg 0$. The goal of this article is to generalize this result  for the  polarizations  $(\mathbb{P}E^*,\mathcal{O}_{\mathbb{P}E^*}(d)\otimes \pi^* L^k)$  for positive integer $d$ and  $k \gg 0$. More precisely,

\begin{thm}\label{thm1}

Suppose that $Aut(X)$ is discrete and $X$ admits a constant scalar
curvature K\"ahler metric in the class of $2\pi  c_{1}(L)$. Let $d$ be a positive integer. If $E$
is Mumford stable, then 
$$(\mathbb{P}E^*,\mathcal{O}_{\mathbb{P}E^*}(d)\otimes \pi^* L^k)$$
is Chow stable for $k \gg 0$.

\end{thm}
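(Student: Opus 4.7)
The projection formula identifies
$$H^0(\mathbb{P}E^*,\mathcal{O}_{\mathbb{P}E^*}(d)\otimes\pi^*L^k)\;\cong\;H^0(X,\textrm{Sym}^d E\otimes L^k),$$
so Chow (semi)stability of the polarized projective bundle translates, via Zhang's criterion, into the existence for all $k\gg 0$ of a balanced embedding of $\mathbb{P}E^*$ into the projectivization of the space on the right. The goal is therefore to construct, for each sufficiently large $k$, a Hermitian inner product on $H^0(X,\textrm{Sym}^d E\otimes L^k)$ whose associated Bergman kernel is a constant multiple of the identity endomorphism of $\textrm{Sym}^d E$.

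Since $E$ is Mumford stable, the Donaldson--Uhlenbeck--Yau theorem supplies a Hermitian-Einstein metric $h$ on $E$; the induced metric $\textrm{Sym}^d h$ on $\textrm{Sym}^d E$ inherits the Hermitian-Einstein property. Combined with the cscK metric $\omega\in 2\pi c_1(L)$ and the corresponding Chern connection on $L$, this provides a canonical model $L^2$-metric on $H^0(X,\textrm{Sym}^d E\otimes L^k)$ for every $k$. The main technical input is the asymptotic expansion
$$B_k\;\sim\;k^n\bigl(A_0+k^{-1}A_1+k^{-2}A_2+\cdots\bigr)$$
of the Bergman kernel of $\textrm{Sym}^d E\otimes L^k$, in the spirit of Catlin, Zelditch and Wang. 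One expects $A_0=\textrm{id}_{\textrm{Sym}^d E}$ and $A_1$ to be a universal expression in the scalar curvature of $\omega$ and the mean curvature of $\textrm{Sym}^d h$, so that the cscK condition on $X$ together with the Hermitian-Einstein condition on $\textrm{Sym}^d E$ forces $A_1$ to be a constant multiple of the identity. The model metric is therefore balanced to arbitrary order in $1/k$ up to a universal overall constant.

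Once this almost-balanced model is in place, the argument proceeds by perturbation exactly as in \cite{S}. The balanced equation is linearized at the model, viewed as an equation on Hermitian endomorphisms of $\textrm{Sym}^d E$ together with Kähler potentials on $X$. The hypothesis that $\textrm{Aut}(X)$ is discrete, combined with the simplicity of $\textrm{Sym}^d E$ inherited from stability of $E$, is used to produce a uniformly bounded inverse to this linearization; the error from almost-balanced to exactly balanced is then absorbed by the implicit function theorem. \emph{The main obstacle} is the Bergman kernel step: although formally parallel to the $d=1$ case treated in \cite{S}, the identification of the subleading coefficients $A_j$ on $\textrm{Sym}^d E$ cannot be quoted directly from existing asymptotic expansions for $E$, and carrying the subsequent perturbation through with the enlarged endomorphism bundle $\textrm{End}(\textrm{Sym}^d E)$ in place of $\textrm{End}(E)$ is where the principal effort is required.
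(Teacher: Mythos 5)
Your overall strategy---reduce Chow stability to balanced metrics via Luo/Zhang/Phong--Sturm, use Donaldson--Uhlenbeck--Yau to get a Hermitian--Einstein metric $h$ on $E$ so that $\textrm{Sym}^d h$ is Hermitian--Einstein, expand the Bergman kernel, and perturb---is the same skeleton as the paper's. But there are two genuine gaps, and the second one is precisely the new difficulty that distinguishes the case $d\ge 2$ from the case $d=1$ treated in \cite{S}.

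First, the assertion that the model metric is ``balanced to arbitrary order in $1/k$'' is false. The cscK and Hermitian--Einstein conditions make only the coefficients of $k^m$ and $k^{m-1}$ in the expansion constant; the higher coefficients $A_2, A_3,\dots$ are not, and one must correct them inductively \`a la Donaldson by perturbing the data at each order. Moreover the relevant expansion is not simply that of the Bergman kernel of $\textrm{Sym}^d E\otimes L^k$ with respect to $(\textrm{Sym}^d h,\omega^m/m!)$: the balanced condition lives on $\mathbb{P}E^*$ with the non-product volume form $k^{-m}(\omega_g+k\omega)^{m+r-1}/(m+r-1)!$, and pushing this down to $X$ introduces the correction endomorphisms $\Psi_j$ (Theorem \ref{thmH1}), which already shift $A_1$ by the factor $r/(r+d)$.

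Second, and more seriously, the order-by-order correction cannot be carried out in the way you describe. If you only deform the metric $h$ on $E$ and the K\"ahler potential on $X$, the linearization of $A_1$ lands in $S^d(\Gamma(\mathrm{End}(E)))\oplus C^\infty(X)\cdot I$, which for $d\ge 2$ is a \emph{proper} subspace of the Hermitian endomorphisms of $\textrm{Sym}^d E$; so the non-constant parts of $A_2$ lying outside this subspace cannot be killed. Conversely, you cannot simply deform by an arbitrary Hermitian metric on $\textrm{Sym}^d E$, because by (the converse part of) Lemma \ref{lemN1} only metrics of the form $\textrm{Sym}^d h$ are reproduced by fibrewise integration of the induced Fubini--Study metric; for a general deformation the pushed-forward metric differs from the one you started with. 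The paper's resolution is to perturb by $\textrm{Sym}^d h(I+k^{-1}\Phi)$ with $\Phi\in\Gamma(\mathrm{End}(\textrm{Sym}^d E))$, compute $A_1(h,\omega,\Phi)=A_1(h,\omega)+\Phi-T\Phi$ for the fibre-integration operator $T$ of Definition \ref{defN2}, and prove $\ker(I-T)=S^d(\mathrm{End}(E))$ together with the splitting $\mathrm{End}(\textrm{Sym}^d E)=\ker(I-T)\oplus\mathrm{Im}(I-T)$ (Lemmas \ref{2011lem4}, \ref{2011lem3}), which yields surjectivity of the combined linearization (Corollary \ref{corH2}). Your proposal has no substitute for this step. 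Relatedly, your appeal to ``the simplicity of $\textrm{Sym}^d E$ inherited from stability of $E$'' is unjustified: $\textrm{Sym}^d E$ is polystable but need not be simple, and the paper only uses simplicity of $E$ itself, handling the remaining directions in $\mathrm{End}(\textrm{Sym}^d E)$ through the operator $T$. The final passage from almost-balanced to balanced is also not a bare implicit function theorem at fixed $k$; it requires the quantitative perturbation result \cite[Theorem 4.6]{S} together with the absence of holomorphic vector fields on $\mathbb{P}E^*$ (Proposition \ref{2011prop 3}).
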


The proof of the Theorem for general $d$ as opposed to $d=1$ needs a new result for the asymptotic expansion of the Bergman kernel.

In order to prove Thm.\ \ref{thm1} we use the concept of
\emph{balanced metrics}. Combining
the results of Luo, Phong, Sturm and Zhang (\cite{L}, \cite{PS1}, \cite{Zh}) on the relation between
balanced metrics and stability, it suffices to prove the following

\begin{thm}\label{thm2}
Let $X$ be a compact complex manifold and $L \rightarrow X$ be an
ample line bundle. Suppose that $X$ admits a constant scalar
curvature K\"ahler metric in the class of $2\pi  c_{1}(L)$ and
$Aut(X)$ is discrete. Let $E\rightarrow X$ be a holomorphic vector
bundle on $X$. If $E $ is Mumford stable, then
$\mathcal{O}_{\mathbb{P}E^*}(d)\otimes \pi^* L^k$ admits balanced 
metrics for $k \gg 0$.

\end{thm}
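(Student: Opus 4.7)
The plan is to extend the $d=1$ argument of \cite{S} to arbitrary $d$, with the new technical input being an asymptotic expansion of the Bergman kernel of $\mathrm{Sym}^d E\otimes L^k$ as $k\to\infty$. The reduction to the base is via $\pi_*\bigl(\mathcal{O}_{\mathbb{P}E^*}(d)\otimes\pi^*L^k\bigr)=\mathrm{Sym}^d E\otimes L^k$: global sections of the polarization on $\mathbb{P}E^*$ correspond to global sections of $\mathrm{Sym}^d E\otimes L^k$ on $X$. A Hermitian metric $h$ on $E$ together with a K\"ahler form $\omega\in 2\pi c_1(L)$ induce a Hermitian metric on the polarization (via the fiberwise Fubini--Study construction on $\mathcal{O}_{\mathbb{P}E^*}(d)$ and a metric on $L$ of curvature $\omega$). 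Unpacking the balanced condition on $\mathbb{P}E^*$ yields a coupled system on the base: the endomorphism-valued Bergman kernel of $\mathrm{Sym}^d E\otimes L^k$ must equal a constant multiple of the identity, and a fiber integral involving $\omega$ must be a prescribed constant.

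To construct approximate solutions I would take the Hermitian--Einstein metric $h_{HE}$ on $E$ supplied by Donaldson--Uhlenbeck--Yau (using Mumford stability), induce $\mathrm{Sym}^d h_{HE}$ on $\mathrm{Sym}^d E$, and combine it with the constant scalar curvature K\"ahler metric $\omega_\infty\in 2\pi c_1(L)$. The new Bergman kernel expansion of $\mathrm{Sym}^d E\otimes L^k$ should take the form $k^n\,\mathrm{Id}+k^{n-1}A_1+k^{n-2}A_2+\dots$, where $A_1,A_2,\dots$ are explicit endomorphism-valued polynomials in the curvatures of $h_{HE}$ and $\omega_\infty$. Because $h_{HE}$ is Hermitian--Einstein and $\omega_\infty$ has constant scalar curvature, these leading coefficients should be scalar multiples of the identity. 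Iteratively perturbing $(h_{HE},\omega_\infty)$ by terms of order $k^{-j}$ then gives approximately balanced pairs $(h_k,\omega_k)$ solving the coupled system modulo $O(k^{-N-1})$ in a suitable Sobolev norm, for any prescribed $N$.

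A quantitative implicit function theorem should then correct $(h_k,\omega_k)$ to a genuine balanced pair. The linearization of the balanced operator at the approximate solution decomposes, to leading order in $k$, into a bundle part whose cokernel corresponds to $\mathrm{End}(E)/\mathbb{C}\cdot\mathrm{Id}$ and a scalar part whose cokernel consists of potentials of Hamiltonian holomorphic vector fields on $X$. Both cokernels vanish by assumption: Mumford stability forces $E$ to be simple, and discreteness of $\mathrm{Aut}(X)$ removes holomorphic vector fields. Uniform lower bounds on the linearization, analogous to those in the $d=1$ case of \cite{S}, then allow Newton iteration to converge for $k\gg 0$. The main obstacle is establishing the Bergman kernel expansion on $\mathrm{Sym}^d E\otimes L^k$ with enough control on the endomorphism-valued coefficients to ensure that the approximate pairs $(h_k,\omega_k)$ can be constructed to arbitrarily high order; this is the key step that goes beyond the $d=1$ case.
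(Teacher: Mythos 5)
Your overall architecture---push sections forward to $\mathrm{Sym}^d E\otimes L^k$, expand the Bergman kernel, build approximate solutions order by order at $(h_{HE},\omega_\infty)$, then correct by a quantitative perturbation argument using simplicity of $E$ and discreteness of $\mathrm{Aut}(X)$---matches the paper. But there is a genuine gap at the heart of the inductive construction. You perturb only the pair $(h,\omega)$, with $h$ a metric on $E$, and you assert that the bundle part of the linearization has cokernel $\mathrm{End}(E)/\mathbb{C}\cdot\mathrm{Id}$, which vanishes by simplicity. That is the correct picture for $d=1$, but for $d\ge 2$ the balanced equation lives in $\mathrm{End}(\mathrm{Sym}^d E)$, whose rank $\binom{r+d-1}{d}$ exceeds $r=\mathrm{rank}\,E$. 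Varying $h$ moves the first Bergman coefficient only inside $S^d(\mathrm{End}(E))\subset \mathrm{End}(\mathrm{Sym}^d E)$, the image of the Lie-algebra map $S^d$, which is a proper subspace. The error terms produced at each stage of your iteration are general hermitian endomorphisms of $\mathrm{Sym}^d E$ and need not lie in that subspace, so the induction stalls after the first step: simplicity of $E$ does not give surjectivity onto all of the trace-free hermitian endomorphisms of $\mathrm{Sym}^d E$, and your Newton/implicit-function step has an uncontrolled cokernel.

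The missing ingredient, which is precisely the new content of the paper beyond the $d=1$ case, is to enlarge the space of perturbations: one considers metrics of the form $\mathrm{Sym}^d h\,(I+k^{-1}\Phi)$ with $\Phi$ an arbitrary hermitian endomorphism of $\mathrm{Sym}^d E$, proves the corresponding Bergman expansion whose first coefficient is $A_1(h,\omega)+\Phi-T\Phi$ for an explicit fiber-integral operator $T$, and shows that $\ker(I-T)=S^d(\mathrm{End}(E))$, so that $\mathrm{End}(\mathrm{Sym}^d E)$ splits as $S^d(\mathrm{End}(E))\oplus \mathrm{Im}(I-T)$. Only with the extra variable $\Phi$ does the combined linearization $(\varphi,\eta,\Phi)\mapsto A_{1,1}(\varphi,\eta,\Phi)$ become surjective, which is what allows the order-by-order construction of almost balanced metrics (and hence the final perturbation to genuine balanced metrics) to go through. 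Without this your argument proves only the $d=1$ statement already in the earlier paper, not the theorem as stated.
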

The balanced condition may be formulated in terms of Bergman
kernels. First, we show that there exists an asymptotic expansion
for the Bergman kernel of
$(\mathbb{P}E^*,\mathcal{O}_{\mathbb{P}E^*}(d)\otimes \pi^* L^k)$.
Fix a K\"ahler form $\omega$ on $X$ and a positive hermitian metric $\sigma$ on $L$ such that $i\bar{\partial}\partial \log
\sigma=\omega$. For any positive hermitian metric $g$ on
$\mathcal{O}_{\mathbb{P}E^*}(d)$, we define the sequence of volume
forms $d\mu_{g,k}$ on $\mathbb{P}E^*$ as follows
$$d\mu_{g,k}=k^{-m}\frac{(\omega_{g} + k
\pi^*\omega)^{m+r-1}}{(m+r-1)!}= \sum_{j=0}^{m}
 k^{j-m} \frac{\omega_{g}^{m+r-1-j}}{(m+r-j)!} \wedge
 \frac{\pi^*\omega^j}{j!},$$ where $\omega_{g}=i\bar{\partial}\partial \log g$.

Let $\rho_{k}(g,\omega)$ be the Bergman kernel of
$H^{0}(\mathbb{P}E^*,\mathcal{O}_{\mathbb{P}E^*}(d)\otimes
\pi^*L^{k})$ with respect to the $L^2$-inner product $L^2(g
\otimes \sigma^{\otimes k}, d\mu_{k,g})$. We prove the
 following theorem.

\begin{thm}{\label{thmH1}}
For any hermitian metric $h$ on $E$ and K\"ahler form $\omega \in
2\pi c_{1}(L)$, there exist smooth endomorphisms
$\widetilde{B}_{k}(h,\omega)$ such that $$\rho_{k}(g,\omega)([v])=
C_{r, d}^{-1}tr \big( \lambda_{d}(v,\textrm{Sym}^d h)\widetilde{B}_{k}(h,\omega)
\big),$$ where $g$ is the Fubini-Study metric on
$\mathcal{O}_{\mathbb{P}E^*}(d)$ induced by the hermitian metric
$h$, $C_{r,d}$ is a constant  defined by \eqref{eqN1} and  $ \lambda_{d}(v,\textrm{Sym}^d h)$ is an endomorphism of $\textrm{Sym}^d E$ defined in Def. \ref{defN1}. Moreover,
\begin{enumerate}
\item There exist smooth endomorphisms  $A_{i}(h,\omega) \in
\Gamma(X,End(\textrm{Sym}^d E))$ such that the following asymptotic expansion holds as
$k \longrightarrow \infty$,
$$ \widetilde{B}_{k}(h,\omega) \sim
k^m+A_{1}(h,\omega)k^{m-1}+\dots.$$

\item In particular
$$A_{1}(h,\omega)=\frac{r}{(r+d)}  \Big (  \Lambda F_{\textrm{Sym}^d h} - \frac{1}{R} tr( \Lambda F_{\textrm{Sym}^d h}) I_{\textrm{Sym}^d E}   \Big )+ \frac{1}{2} S(\omega) I_{\textrm{Sym}^d  E},$$ where $\Lambda $ is the trace operator acting
on $(1,1)$-forms with respect to the K\"ahler form $\omega$, 
$F_{\textrm{Sym}^d h}$ is the curvature of $(\textrm{Sym}^d E,\textrm{Sym}^d h)$, $R$ is the rank of the bundle $\textrm{Sym}^d E$ and $S(\omega)$ is the scalar curvature of $\omega$.

\item The asymptotic expansion holds in $C^{\infty}$. More
precisely, for any positive integers $a$ and $p$, there exists a
positive constant $K_{a,p,\omega,h}$ such that
$$\Big |\Big|\widetilde{B}_{k}(h,\omega)-\big(
k^m+\dots+A_{p}(h,\omega)k^{m-p} \big)\Big|\Big|_{C^a}\leq
K_{a,p,\omega,h} k^{m-p-1}.$$ Moreover the expansion is uniform in
the sense that there exists a positive integer $s$ such that if
$h$ and $\omega$ run in a bounded family in $C^s$ topology and
$\omega$ is bounded from below, then the constants $K_{a,p,\omega,h}$
are bounded by a constant depending only on $a$ and $p$.

\end{enumerate}

\end{thm}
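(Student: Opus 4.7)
The plan is to reduce the Bergman kernel on the projective bundle to the endomorphism-valued Bergman kernel on $X$ of the twisted vector bundle $\textrm{Sym}^d E \otimes L^k$, and then apply the Catlin--Zelditch--Lu--Ma--Marinescu asymptotic expansion for Bergman kernels of hermitian vector bundles. The reduction rests on the canonical isomorphism
\begin{align*}
H^0\bigl(\mathbb{P}E^*, \mathcal{O}_{\mathbb{P}E^*}(d) \otimes \pi^* L^k\bigr) \;\cong\; H^0\bigl(X, \textrm{Sym}^d E \otimes L^k\bigr)
\end{align*}
coming from $\pi_* \mathcal{O}_{\mathbb{P}E^*}(d) = \textrm{Sym}^d E$: a section $s$ of $\textrm{Sym}^d E \otimes L^k$ corresponds to the section $\tilde{s}$ whose value at $[v] \in \mathbb{P}(E^*_x)$ is the pairing of $s(x)$ with $v^{\otimes d}$. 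Under this identification, the Fubini--Study metric $g$ induced by $h$ satisfies $|\tilde{s}([v])|_g^2 = |\langle s(x), v^{\otimes d}\rangle|^2 / (\textrm{Sym}^d h)(v^{\otimes d}, v^{\otimes d})$, and $d\mu_{g,k}$ splits into a vertical Fubini--Study piece on $\mathbb{P}^{r-1}$ and horizontal factors involving powers of $\omega$.

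Fiber integration then yields a $k$-dependent endomorphism $T_{d,k}(h,\omega) \in \Gamma(X, \textrm{End}(\textrm{Sym}^d E))$ and an identity
\begin{align*}
\bigl\langle \tilde{s}_1, \tilde{s}_2 \bigr\rangle_{L^2(g\otimes \sigma^k,\, d\mu_{g,k})} = \int_X \bigl\langle T_{d,k}(h,\omega)\, s_1(x), s_2(x) \bigr\rangle_{\textrm{Sym}^d h \otimes \sigma^k}\, \frac{\omega^m}{m!}.
\end{align*}
Expanding the Bergman kernel through an orthonormal basis of this modified $L^2$ inner product gives the trace formula of the theorem, with $\widetilde{B}_k(h,\omega)$ the endomorphism-valued Bergman kernel of $\textrm{Sym}^d E \otimes L^k$ on $X$ with respect to the twisted metric, and $C_{r,d}$ arising as the leading (constant in $k$) fiber-integration factor. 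Parts (1) and (3) of the theorem then follow from the Ma--Marinescu asymptotic expansion for hermitian vector bundles, combined with the smooth polynomial dependence of $T_{d,k}$ on $k^{-1}$ and on $(h,\omega)$; the uniform $C^a$ estimates transfer directly from the vector-bundle statement.

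The hard part will be the explicit identification of $A_1$ in part (2). The standard vector-bundle Bergman expansion contributes the term $\Lambda F_{\textrm{Sym}^d h} + \frac{1}{2} S(\omega)\, I$, but the presence of the modifier $T_{d,k}$ and the Fubini--Study structure must produce both the prefactor $\frac{r}{r+d}$ and the traceless correction $-\frac{1}{R}\,\textrm{tr}(\Lambda F_{\textrm{Sym}^d h})\, I$. I would handle this by expanding $T_{d,k}$ to second order in $k^{-1}$ using the fiberwise decomposition $\omega_g = d\, \omega_{FS} + \pi^*(\text{term involving } F_h)$, evaluating the resulting integrals of monomials in $v,\bar{v}$ over $\mathbb{P}^{r-1}$ in closed form, and then multiplying the two asymptotic series to isolate the coefficient of $k^{m-1}$. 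The traceless modification should emerge from inverting the fiber-integration factor, which is a rank-one perturbation of a multiple of the identity, as required to match the stated form of $A_1$.
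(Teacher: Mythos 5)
Your proposal follows essentially the same route as the paper: reduce to the endomorphism-valued Bergman kernel of $\textrm{Sym}^d E\otimes L^k$ on $X$ by fiber integration, absorb the ratio $d\mu_{g,k}/d\mu_{g}$ into a $k$-dependent perturbation $h(k)=\textrm{Sym}^d h\,(\sum_j k^{j-m}\Psi_j)$ of the metric, invoke the standard Bergman expansion uniformly over bounded families, and evaluate the order-$k^{-1}$ fiber integrals of monomials explicitly to obtain $A_1=B_1(\textrm{Sym}^d h)-\Psi_{m-1}$. The only slip is your description of the fiber-integration factor as a rank-one perturbation of a multiple of the identity --- the relevant correction $\Psi_{m-1}=\frac{d}{r+d}\bigl(\Lambda F_{\textrm{Sym}^d h}+\mathrm{tr}(\Lambda F_{h})\,I\bigr)$ is not rank one --- but this does not affect the argument.
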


Finding balanced metrics on $\mathcal{O}_{\mathbb{P}E^*}(d)\otimes
\pi^* L^k$ is basically the same as finding solutions to the
equations $\rho_{k}(g,\omega)= \textrm{Constant}.$ Therefore in
order to prove Thm.\ \ref{thm2}, we need to solve the equations
$\rho_{k}(g,\omega)= \textrm{Constant}$ for $k \gg 0$. Now if $h$ satisfies the
Hermitian-Einstein equation $\Lambda_{\omega}F_{(E,h)}=\mu I_{E}$, then  $\textrm{Sym}^d h$ satisfies a Hermitian-Einstein equation as well. Therefore if
$\omega$ has constant scalar curvature and  $h$ satisfies the
Hermitian-Einstein equation, then $A_{1}(h,\omega)$ is constant. Notice that in order to make
$A_{1}$ constant, existence of Hermitian-Einstein metric is not
enough. We need the existence of constant scalar curvature
K\"ahler metric as well.  Therefore if we know that the linearization of $A_{1}$ at $(h_{HE}, \omega_{CSK})$ is surjective, we could construct formal solutions as power series in 
$k^{-1}$ for the equation $\rho_{k}(g,\omega)= \textrm{Constant}.$ Unfortunately the linearization of $A_{1}$ at $(h_{HE}, \omega_{CSK})$ is  only onto the subspace of hermitian endomorphisms  of $\textrm{Sym}^d  E$ that are induced from endomorphisms of $E$. In order to overcome this issue, we generalize Theorem 1.3 to metrics of the form $\textrm{Sym}^d  h (I+k^{-1} \Phi)$, where $h$ is a metric on $E$ and $\Phi$ is a hermitian endomorphism of $\textrm{Sym}^d  E$ .
Let $g$ and $g_{k}(\Phi)$ be the Fubini-Study metrics on $\mathcal{O}_{\mathbb{P}E^*}(d)$ induced by $\textrm{Sym}^d  h $ and $\textrm{Sym}^d  h (I+k^{-1} \Phi)$ respectively. Let $\rho_{k}(g, \omega, \Phi)$ be the Bergman kernel of $(\mathcal{O}_{\mathbb{P}E^*}(d)\otimes \pi^* L^k)$ with respect to $L^2(g_{k}(\Phi)
\otimes \sigma^{\otimes k}, d\mu_{k,g, \Phi})$, where  $\displaystyle d\mu_{k,g, \Phi}= k^{-m}\frac{(\omega_{g_{k}(\Phi)}+ k \omega)^{m+r-1}}{(m+r-1)!}$ and $\omega_{g_{k}(\Phi)}= i\bar{\partial}\partial  \log g_{k}(\Phi)$.

\begin{thm}{\label{thmH1a}}
Let $h$ be a hermitian metric on $E$, $\Phi$ be a hermitian endomorphism of $\textrm{Sym}^d  E$  and $\omega \in
2\pi c_{1}(L)$ be a K\"ahler form. Then there exist smooth endomorphisms
$\widetilde{B}_{k}(h,\omega, \Phi)$ such that $$\rho_{k}(g,\omega, \Phi)([v])=
C_{r, d}^{-1}tr \big( \lambda_{d}(v,\textrm{Sym}^d h(I+k^{-1}\Phi))\widetilde{B}_{k}(h,\omega, \Phi)
\big),$$ where $g$ is the Fubini-Study metric on
$\mathcal{O}_{\mathbb{P}E^*}(d)$ induced by the hermitian metric
$h$. Moreover,
\begin{enumerate}
\item There exist smooth endomorphisms  $A_{i}(h,\omega. \Phi) \in
\Gamma(X,End(\textrm{Sym}^d E))$ such that the following asymptotic expansion holds as
$k \longrightarrow \infty$,
$$ \widetilde{B}_{k}(h,\omega, \Phi) \sim
k^m+A_{1}(h,\omega, \Phi)k^{m-1}+\dots.$$

\item In particular
$$A_{1}(h,\omega,\Phi)=A_{1}(h, \omega)+ \Phi-T(\Phi)$$ where $A_{1}(h,\omega)$ is given in Theorem \ref{thmH1}   and $T:\textrm{End}(\textrm{Sym}^d  E)\rightarrow \textrm{End}(\textrm{Sym}^d E)$ is a bundle map defined in Def. \ref{defN2}. 
\item The asymptotic expansion holds in $C^{\infty}$. More
precisely, for any positive integers $a$ and $p$, there exists a
positive constant $K_{a,p,\omega,h}$ such that
$$\Big |\big|\widetilde{B}_{k}(h,\omega, \Phi)-\big(
k^m+\dots+A_{p}(h,\omega, \Phi)k^{m-p} \big)\big|\Big|_{C^a}\leq
K_{a,p,\omega,h, \Phi} k^{m-p-1}.$$ Moreover the expansion is uniform in
the sense that there exists a positive integer $s$ such that if
$h$, $\omega$ and $\Phi$ run in a bounded family in $C^s$ topology and
$\omega$ and $h$ are bounded from below, then the constants $K_{a,p,\omega,h, \Phi}$
are bounded by a constant depending only on $a$ and $p$.

\end{enumerate}

\end{thm}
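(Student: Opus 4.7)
The plan is to adapt the proof of Theorem~\ref{thmH1} by treating the $\Phi$-perturbation of the fibre metric as an $O(k^{-1})$ correction. The key observation is that $H_{k,\Phi}:=\textrm{Sym}^d h\,(I+k^{-1}\Phi)$ is an $O(k^{-1})$ deformation of $\textrm{Sym}^d h$ in $C^\infty$, so the induced Fubini-Study metric $g_k(\Phi)$ on $\mathcal{O}_{\mathbb{P}E^*}(d)$, its curvature $\omega_{g_k(\Phi)}$, and the volume form $d\mu_{k,g,\Phi}$ all depend smoothly on $\Phi$ and reduce to their unperturbed counterparts when $\Phi=0$. One would first establish the trace identity for $\rho_k(g,\omega,\Phi)$ in the spirit of Theorem~\ref{thmH1}, and then expand the resulting endomorphism $\widetilde{B}_k(h,\omega,\Phi)$ as a formal power series in $k^{-1}$, using Theorem~\ref{thmH1} itself to handle the $\Phi=0$ contribution.

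For the trace identity, I would use the identification $H^0(\mathbb{P}E^*,\mathcal{O}_{\mathbb{P}E^*}(d)\otimes\pi^*L^k)\cong H^0(X,\textrm{Sym}^d E\otimes L^k)$ and perform fibre integration along $\pi$. The $L^2$-inner product coming from $(g_k(\Phi)\otimes\sigma^{\otimes k},d\mu_{k,g,\Phi})$ descends to a twisted inner product on $H^0(X,\textrm{Sym}^d E\otimes L^k)$, whose Gram-type operator --- call its inverse $\widetilde{B}_k(h,\omega,\Phi)$ --- yields the claimed pointwise formula when traced against $\lambda_d(v,H_{k,\Phi})$. The asymptotic expansion of $\widetilde{B}_k(h,\omega,\Phi)$ then follows from the same Bergman-kernel machinery used in Theorem~\ref{thmH1}, now applied to the $\Phi$-dependent setting; the smooth, polynomial dependence on $\Phi$ ensures both the existence of the expansion and the uniformity asserted in part~(3). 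The formula for $A_1(h,\omega,\Phi)$ is obtained by isolating the first-order correction in $k^{-1}$: the pointwise weight satisfies $\lambda_d(v,H_{k,\Phi})=\lambda_d(v,\textrm{Sym}^d h)+O(k^{-1})$, whose leading correction contributes $+\Phi$ after fibre integration and trace, while the $O(k^{-1})$ corrections in the twisted volume form and fibrewise norm combine to a fibre-averaging of $\Phi$, namely $-T(\Phi)$. Adding the unperturbed contribution $A_1(h,\omega)$ supplied by Theorem~\ref{thmH1} yields the stated expression.

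The principal technical obstacle is the bookkeeping needed to show that the two competing first-order corrections really combine to $\Phi-T(\Phi)$ with $T$ given by exactly the bundle map of Def.~\ref{defN2}. This reduces to algebraic identities involving moments of the fibrewise Fubini-Study measure on $\mathbb{P}^{r-1}$ with $\textrm{End}(\textrm{Sym}^d E)$-valued integrands, and is what justifies the specific form of $T$ as the natural obstruction to $\Phi$ arising from an endomorphism of $E$ --- precisely the obstruction that forced the introduction of $\Phi$ in the first place.
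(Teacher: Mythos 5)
Your proposal follows essentially the same route as the paper: push the perturbed fibre metric and volume form down to a metric $h(k,\Phi)$ on $\textrm{Sym}^d E$, apply the standard Bergman kernel expansion there, and track the two competing $O(k^{-1})$ corrections, which is exactly how the paper arrives at $A_1(h,\omega)+\Phi-T\Phi$. One clarification: in the paper the $+\Phi$ term arises not from expanding the weight $\lambda_d(v,\textrm{Sym}^d h(I+k^{-1}\Phi))$ (which is kept exact in the trace identity), but from the discrepancy between the pushforward metric $h(k,\Phi)=\textrm{Sym}^d h\bigl(I+k^{-1}(T\Phi+\Psi_{m-1})+O(k^{-2})\bigr)$, with respect to which the sections are $L^2$-orthonormal, and the reference metric $\textrm{Sym}^d h(I+k^{-1}\Phi)$ used to take adjoints in the definition of $\widetilde{B}_k(h,\omega,\Phi)$.
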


Next, the crucial fact is that the
linearization of $A_{1}$ at $(h,\omega, I_{\textrm{Sym}^d E})$ is surjective. This
enables us to construct formal solutions as power series in
$k^{-1}$ for the equation $\rho_{k}(g,\omega)= \textrm{Constant}.$
Therefore, for any positive integer $q$, we can construct a
sequence of metrics $g_{k}$ on
$\mathcal{O}_{\mathbb{P}E^*}(d)\otimes \pi^*L^{k}$ and bases
$s^{(k)}_{1},...,s_{N}^{(k)}$ for
$H^0(\mathbb{P}E^*,\mathcal{O}_{\mathbb{P}E^*}(d))$ such that
$$\sum |s^{(k)}_{i}|_{g_{k}}^2=1\ , \text{ and}\quad
\int \langle s^{(k)}_{i},s^{(k)}_{j}\rangle_{g_{k}} dvol_{g_{k}}=
D_{k}I+M_{k},$$ where $D_{k} \rightarrow C_{r,d}$ as $k \rightarrow
\infty$ (See \eqref{eqN1} for definition of $C_{r, d}$.), and $M_{k}$
is a trace-free hermitian matrix such that
$||M_{k}||_{\textrm{op}}=o(k^{-q-1})$ as $k\rightarrow \infty$.
Now  \cite[Theorem 4.6.]{S} implies that we can perturb these almost balanced metrics to get
balanced metrics. 

This article covers the following. In section $2$, we review some basics about symmetric powers. In section $3$, we prove the existence of an asymptotic expansions of Bergman kernels for $(\mathbb{P}E^*,\mathcal{O}_{\mathbb{P}E^*}(d)\otimes
\pi^*L^{k})$. We prove Theorem \ref{thmH1} and Theorem \ref{thmH1a} in this section. Section $4$ is devoted to construction of sequences of almost balanced metrics on $(\mathbb{P}E^*,\mathcal{O}_{\mathbb{P}E^*}(d)\otimes
\pi^*L^{k})$ using Theorem \ref{thmH1a}. In section $5$, we prove Theorem \ref{2011thm 1} and Theorem \ref{2011thm 1a} which guarantee that the asymptotic expansions obtained in Theorem \ref{thmH1} and Theorem \ref{thmH1a} hold in $C^{\infty}$. 
 
We refer the reader to \cite{S} for a history on the subject and related results.

\thanks{\textbf{Acknowledgements:} I am sincerely grateful to Richard Wentworth for  many helpful discussions and suggestions
and his continuous help, support and encouragement. I would  also like to thank Zhiqin Lu for many helpful discussions and suggestions.}

\section{Preliminaries}

Let $V$ be a complex vector space of dimension $r$. We define $$\textrm{Sym}^d V =V^{\otimes d}/\sim,$$ where
$$v_{1} \otimes \dots \otimes v_{d}\sim v_{\sigma(1)} \otimes \dots \otimes v_{\sigma(d)}$$ for any $\sigma \in S_{d}.$
We simply denote the class of $v_{1} \otimes \dots \otimes v_{d}$ in $\textrm{Sym}^d V$ by $v_{1}  \dots  v_{d}.$

Any hermitian inner product $h$ on $V$ defines a hermitian inner product  $\textrm{Sym}^d h$ on $\textrm{Sym}^d V$ by 
$$<v_{1}  \dots  v_{d}, w_{1}  \dots  w_{d}>_{\textrm{Sym}^d h}= \frac{1}{d!}\sum _{\sigma \in S_{d}} <v_{1},w_{\sigma(1)}> \dots <v_{d},w_{\sigma(d)}> .$$  

\begin{lem}

Let $e_{1}, \dots , e_{r}$ be a basis for $V$, then $$\{ e_{1}^{i_{1}} \dots e_{r}^{i_{r}} | 0\leq i_{\alpha} \leq d,\sum_{\alpha=1}^r j_{\alpha}=d \}$$ forms a basis for $\textrm{Sym}^d V$. Moreover, if the basis  $e_{1}, \dots , e_{r}$ is an orthonormal basis with respect to $h$, then    $$<e_{1}^{i_{1}} \dots e_{r}^{i_{r}}, e_{1}^{j_{1}} \dots e_{r}^{i_{r}}>_{\textrm{Sym}^d h}=0  \,\,\,\,\,\,  \textrm{if} \,\,\,\,\,\,\,\, (i_{1},\dots ,i_{r})\neq (j_{1},\dots ,j_{r}),$$
$$||e_{1}^{i_{1}} \dots e_{r}^{i_{r}}||_{\textrm{Sym}^d h}^2= \frac{i_{1}! \dots i_{r}!}{d!},$$ where $i_{1},\dots, i_{r}, j_{1}, \dots, j_{r}$ are integers such that $0\leq i_{\alpha},j_{\alpha} \leq d  $ and 
$\sum_{\alpha=1}^r i_{\alpha}=\sum_{\alpha=1}^r j_{\alpha}=d.$ 

\end{lem}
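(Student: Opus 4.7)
The plan is to handle the two parts of the lemma in sequence: first establish that the monomials span and are linearly independent, then compute the inner products in the orthonormal case using the explicit definition of $\textrm{Sym}^d h$.

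For the basis statement, I would start from the fact that $\{e_{j_1}\otimes\cdots\otimes e_{j_d}:1\le j_\alpha\le r\}$ is a basis of $V^{\otimes d}$, and observe that the symmetrization equivalence $\sim$ partitions this basis into the $S_d$-orbits. Two pure tensors lie in the same orbit precisely when the multisets $\{j_1,\ldots,j_d\}$ agree, so the orbits are indexed by multiplicity vectors $(i_1,\ldots,i_r)$ with $i_\alpha\ge 0$ and $\sum i_\alpha=d$. Since the quotient map $V^{\otimes d}\to\textrm{Sym}^d V$ identifies exactly the elements within each orbit and introduces no further relations between orbits (this is visible from the fact that the symmetrizer projection $\frac{1}{d!}\sum_\sigma \sigma$ has image exactly $\textrm{Sym}^d V$ and maps distinct orbits to linearly independent symmetric tensors), the images $e_1^{i_1}\cdots e_r^{i_r}$ are linearly independent and span.

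For the inner product computation, I would represent each monomial as a product of basis vectors, writing $e_1^{i_1}\cdots e_r^{i_r}=e_{k_1}\cdots e_{k_d}$ where the sequence $(k_1,\ldots,k_d)$ lists each index $\alpha$ exactly $i_\alpha$ times, and similarly $e_1^{j_1}\cdots e_r^{j_r}=e_{\ell_1}\cdots e_{\ell_d}$. Applying the definition gives
$$\langle e_1^{i_1}\cdots e_r^{i_r},e_1^{j_1}\cdots e_r^{j_r}\rangle_{\textrm{Sym}^d h}=\frac{1}{d!}\sum_{\sigma\in S_d}\prod_{\alpha=1}^d \langle e_{k_\alpha},e_{\ell_{\sigma(\alpha)}}\rangle_h=\frac{1}{d!}\,\#\{\sigma\in S_d:k_\alpha=\ell_{\sigma(\alpha)}\ \forall\alpha\},$$
since by orthonormality only those $\sigma$ matching the sequences contribute, each with value $1$.

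The counting step is the only real content. If the multiplicity vectors differ then no such $\sigma$ exists and the inner product vanishes. If they agree, then $\sigma$ must permute each block of constant value in $(\ell_1,\ldots,\ell_d)$ among itself, and any such permutation works; the count is therefore $\prod_\alpha i_\alpha!$, yielding $\|e_1^{i_1}\cdots e_r^{i_r}\|^2_{\textrm{Sym}^d h}=\frac{i_1!\cdots i_r!}{d!}$. There is no real obstacle here; the only point that deserves care is justifying that the symmetrization relation imposes no hidden relation between different multi-indices, which is most cleanly seen via the symmetrizer projection as above.
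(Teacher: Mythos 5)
Your proof is correct: the paper states this lemma without proof (treating it as a standard fact), and your argument — orbits of pure tensors under $S_d$ for the basis claim, with the symmetrizer projection justifying independence across orbits, followed by the permutation count $\prod_\alpha i_\alpha!$ for the norm — is exactly the expected verification. The only caveat is to read the displayed set and inner product in the statement with the evident typos corrected ($\sum i_\alpha = d$ and $e_1^{j_1}\cdots e_r^{j_r}$), which you have implicitly done.
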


\begin{Def}{\label{defN1}}
 For any hermitian inner product $H$ on $\textrm{Sym}^d V$ and $v \in V$, define an endomorphism  $\lambda_{d}(v, H )$ of  $\textrm{Sym}^d V$ by $$\lambda_{d}(v, H)= |v_{d}|_{H}^{-2}v_{d} \otimes v_{d}^{*_{H}},$$
where $v_{d}= v \dots v.$  
For any  hermitian inner product $H$ on $\textrm{Sym}^d V$ and $v^* \in V^*$, define an endomorphism  $\lambda_{d}(v^*, H )$ of  $\textrm{Sym}^d V$ by $$\lambda_{d}(v^*, H)= |w|_{H}^{-2}w \otimes w^{*_{H}},$$
where $w$ is the unique vector in $\textrm{Sym}^d V$ satisfying $$v_{d}^*(u)=\langle u, w \rangle_{H}   \,\,\,\,\,\, \forall  u \in \textrm{Sym}^d V.$$

\end{Def}

There is a natural isomorphism between $\textrm{Sym}^d V$ and
$H^{0}(\mathbb{P}V^*,\mathcal{O}_{\mathbb{P}V^*}(d))$ which sends
$v_{1}\dots v_{d}\in \textrm{Sym}^d V$ to $\widehat{v_{1}\dots v_{d} }\in
H^{0}(\mathbb{P}V^*,\mathcal{O}_{\mathbb{P}V^*}(d)) $ defined by 
$$\widehat{v_{1}\dots v_{d} }([v^*])(w_{1}^* \otimes \dots \otimes w_{d}^*)= w_{1}^*(v_{1})\dots w_{d}^*(v_{d}),$$ where $w_{1}^* \in V^*$ and 
there exist complex numbers $\lambda_{1}, \dots ,\lambda_{d}$ such that $w_{i}^*=\lambda_{i} v^*. $
Notice that $$\widehat{v_{1}\dots v_{d} }([v^*])(w_{1}^* \otimes \dots \otimes w_{d}^*)=\lambda_{1} \dots \lambda_{d} v^*(v_{1})\dots v^*(v_{d})$$ and therefore it is a well-defined section of  $\mathcal{O}_{\mathbb{P}V^*}(d) $.

For any hermitian inner product $H$ on $\textrm{Sym}^d V$, define a metric $\widehat{H}$ on $\mathcal{O}_{\mathbb{P}V^*}(d) $ by 
$$\langle\hat{s},\hat{t}\rangle _{\widehat{H}}[v]= \frac{v^{\otimes d}(s) \overline{v^{\otimes d}(t)}}{|v \dots v|_{H}^{2}}.$$ Note that originally $H$ is a hermitian inner product on $\textrm{Sym}^d V$ and $v\dots v \in \textrm{Sym}^d V^*$. However, the hermitian inner product $H$ induces a hermitian inner product on $\textrm{Sym}^d V^*$ which we denote it by $H$ as well.
In particular $$\langle\hat{s},\hat{t}\rangle_{\widehat{\textrm{Sym}^d h}}[v]= \frac{v^{\otimes d}(s) \overline{v^{\otimes d}(t)}}{|v |_{h}^{2d}}.$$
The following lemma is straightforward.

\begin{lem}

For any hermitian inner product $h$ on $V$, we have  $$\hat{h}^{\otimes d}=\widehat{\textrm{Sym}^d h}.$$
\end{lem}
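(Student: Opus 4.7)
The plan is to check the identity by reducing both metrics to the same pointwise formula on sections of $\mathcal{O}_{\mathbb{P}V^*}(d)$. The key computation is the norm of the diagonal symmetric power: for any $v \in V$,
\[|v\cdots v|^2_{\textrm{Sym}^d h} = |v|_h^{2d},\]
which follows immediately from the definition of $\textrm{Sym}^d h$, since each of the $d!$ terms in the symmetrized sum reduces to $\langle v,v\rangle_h^d$ and the $1/d!$ prefactor exactly cancels.

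Substituting this into the definition of $\widehat{\textrm{Sym}^d h}$ gives
\[\langle \hat{s},\hat{t}\rangle_{\widehat{\textrm{Sym}^d h}}[v] = \frac{v^{\otimes d}(s)\,\overline{v^{\otimes d}(t)}}{|v|_h^{2d}}.\]
On the other side, I would evaluate $\hat{h}^{\otimes d}$ on decomposable sections. Under the canonical isomorphism $\mathcal{O}_{\mathbb{P}V^*}(1)^{\otimes d} \simeq \mathcal{O}_{\mathbb{P}V^*}(d)$, the section $\widehat{v_1\cdots v_d}$ is identified with $\hat{v}_1\otimes\cdots\otimes\hat{v}_d$, whose tensor-product norm at $[v]$ equals
\[\prod_{i=1}^d |\hat{v}_i|^2_{\hat{h}}[v] = \prod_{i=1}^d \frac{|v(v_i)|^2}{|v|_h^2} = \frac{|v^{\otimes d}(v_1\cdots v_d)|^2}{|v|_h^{2d}}.\]
This matches the formula above. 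Since decomposable symmetric tensors span $\textrm{Sym}^d V$, sesquilinearity extends the agreement to all sections, giving the equality of the two hermitian metrics.

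I do not anticipate any real obstacle; the lemma is essentially a definition-chase, whose entire content is the elementary identity $|v\cdots v|_{\textrm{Sym}^d h} = |v|_h^d$ together with the compatibility of the tensor-product identification of sections of $\mathcal{O}_{\mathbb{P}V^*}(1)^{\otimes d}$ with sections of $\mathcal{O}_{\mathbb{P}V^*}(d)$ coming from symmetric tensors.
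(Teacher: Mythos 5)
Your proof is correct and is exactly the verification the paper has in mind: the paper offers no argument, simply calling the lemma straightforward, and your definition-chase via the identity $|v\cdots v|^2_{\textrm{Sym}^d h}=|v|_h^{2d}$ together with the identification of $\widehat{v_1\cdots v_d}$ with $\hat{v}_1\otimes\cdots\otimes\hat{v}_d$ is the intended computation. (Since $\mathcal{O}_{\mathbb{P}V^*}(d)$ is a line bundle, checking norms of the nonvanishing decomposable sections at each point already determines the metric, so the final extension step is immediate.)
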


\begin{lem}{\label{lemN1} }
 There exists a constant $C_{r,d}$ such that for any $v,w \in \textrm{Sym}^d V$ and any hermitian inner product $h$ on $V$,
 
 $$d^{r-1}\int_{\mathbb{P}V^*} \langle \hat{v},\hat{w}\rangle_{\widehat{h}^{\otimes d}}
\frac{\omega_{\hat{h}}^{r-1}}{(r-1)!}= C_{r,d} \langle v,w \rangle _{\textrm{Sym}^d h},$$ where $\omega_{\hat{h}}= i\bar{\partial}\partial \log \hat{h}$.
The constant $C_{r,d}$ is given by the following formula
\begin{equation}\label{eqN1}C_{r,d}=\int_{\mathbb{C}^{r-1}}
\frac{d\xi \wedge d\overline{\xi}}{(1+\sum_{j=1}^{r-1}
|\xi_{j}|^{2})^{r+d}}.\end{equation} Here $d\xi \wedge
d\overline{\xi}= (\sqrt{-1}d\xi_{1} \wedge
d\overline{\xi}_{1})\wedge \dots \wedge (\sqrt{-1}d\xi_{r-1}
\wedge d\overline{\xi}_{r-1}).$

Conversely, let $H$ be a hermitian inner product on $\textrm{Sym}^d V$. Suppose there exists a constant $C$ such that 
 $$\int_{\mathbb{P}V^*} \langle \hat{v},\hat{w}\rangle_{\widehat{H}} \frac{\omega_{\widehat{H}}^{r-1}}{(r-1)!}= C \langle v,w \rangle _{H},$$  for any $v,w \in \textrm{Sym}^d V$. Then there exists a hermitian inner product $h$ on $V$ such that $H=\textrm{Sym}^d h$.

\end{lem}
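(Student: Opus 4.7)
For the forward implication, I would use Schur's lemma. Fix $h$ on $V$ and set
$$\beta_h(v,w):=d^{r-1}\int_{\mathbb{P}V^*}\langle \hat{v},\hat{w}\rangle_{\hat{h}^{\otimes d}}\,\frac{\omega_{\hat{h}}^{r-1}}{(r-1)!}.$$
Both $\beta_h$ and $\langle v,w\rangle_{\textrm{Sym}^d h}$ are positive-definite Hermitian inner products on $\textrm{Sym}^d V$ that are invariant under the natural action of $U(V,h)$, because the unitary action on $\mathbb{P}V^*$ preserves both the pointwise metric $\hat h$ and its curvature $\omega_{\hat h}$. Since the symmetric-power representation of $U(r)$ on $\textrm{Sym}^d V$ is irreducible, Schur's lemma forces $\beta_h=c(h)\langle \cdot,\cdot\rangle_{\textrm{Sym}^d h}$ for some $c(h)>0$. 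A direct $GL(V)$-equivariance check---comparing how the two sides transform under $h\mapsto (g^{-1})^* h g^{-1}$ using the induced biholomorphism of $\mathbb{P}V^*$---then shows $c(h)$ is independent of $h$.

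To evaluate this universal constant, I would test on $v=w=e_1^d$ for an $h$-orthonormal basis $e_1,\dots,e_r$; the preceding lemma gives $\|e_1^d\|^2_{\textrm{Sym}^d h}=1$. In the affine coordinates $\xi_\alpha=v^*_{\alpha+1}/v^*_1$ on $\mathbb{P}V^*$, the definition of $\widehat{H}$ yields $|\widehat{e_1^d}|^2_{\hat h^{\otimes d}}=(1+|\xi|^2)^{-d}$, while $\omega_{\hat h}=i\bar\partial\partial\log\hat h$ produces the standard Fubini--Study volume form $\omega_{\hat h}^{r-1}/(r-1)!$ in these coordinates. The prefactor $d^{r-1}$ exactly absorbs the scaling $\omega_{\hat h^{\otimes d}}=d\,\omega_{\hat h}$, so after substitution the integral reduces to $\int_{\mathbb{C}^{r-1}} d\xi\wedge d\bar\xi/(1+|\xi|^2)^{r+d}$, matching the defining formula \eqref{eqN1}.

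For the converse, I would appeal to the uniqueness of balanced metrics. The hypothesis
$$\int_{\mathbb{P}V^*}\langle \hat{v},\hat{w}\rangle_{\widehat{H}}\,\frac{\omega_{\widehat{H}}^{r-1}}{(r-1)!}=C\langle v,w\rangle_H$$
is precisely the balanced-embedding condition for the Kodaira embedding $\mathbb{P}V^*\hookrightarrow \mathbb{P}(\textrm{Sym}^d V^*)$ by $|\mathcal{O}_{\mathbb{P}V^*}(d)|$. By the uniqueness results of Luo, Phong--Sturm and Zhang cited in the introduction, the set of solutions $H\in \mathcal{P}(\textrm{Sym}^d V)$ forms a single orbit under $\mathrm{Aut}(\mathbb{P}V^*,\mathcal{O}(d))=PGL(V)$ acting through the symmetric-power representation, up to an overall positive scalar. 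Part (1) exhibits $\textrm{Sym}^d h_0$ as such a solution, and since $\sigma(g)\cdot \textrm{Sym}^d h_0=\textrm{Sym}^d(g\cdot h_0)$ and $\lambda\,\textrm{Sym}^d h=\textrm{Sym}^d(\lambda^{1/d}h)$, this orbit coincides with $\{\textrm{Sym}^d h:h\in\mathcal{P}(V)\}$. Hence every balanced $H$ is of the asserted form. The main obstacle is precisely this converse step: the Schur argument of part (1) cannot by itself distinguish $\textrm{Sym}^d h$ from other $U(r)$-invariant Hermitian forms on $\textrm{Sym}^d V$, so an external uniqueness input (equivalently a moment-map argument on $\mathbb{P}V^*$) is unavoidable.
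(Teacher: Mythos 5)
Your proposal is correct and follows essentially the same route as the paper: the converse is exactly the paper's argument (the hypothesis makes $\widehat{H}$ a balanced metric on $(\mathbb{P}V^*,\mathcal{O}_{\mathbb{P}V^*}(d))$, and uniqueness of balanced metrics up to $PGL(V)$ forces $H$ into the $\textrm{Sym}^d$-orbit), while the forward direction is the "straightforward computation" the paper leaves implicit, which your Schur's-lemma organization plus the evaluation at $e_1^d$ carries out cleanly. Your explicit remarks that the $PGL(V)$-orbit of $\textrm{Sym}^d h_0$ modulo scalars is exactly $\{\textrm{Sym}^d h\}$ (using $\lambda\,\textrm{Sym}^d h=\textrm{Sym}^d(\lambda^{1/d}h)$) fill in a detail the paper glosses over.
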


\begin{proof}
The first part is a straightforward computation. For the second part, suppose that $H$ is a hermitian inner product on $\textrm{Sym}^d V$ satisfying $$\int_{\mathbb{P}V^*} \langle \hat{v},\hat{w}\rangle_{\widehat{H}} \frac{\omega_{\widehat{H}}^{r-1}}{(r-1)!}= C \langle v,w \rangle _{H},$$  for any $v,w \in \textrm{Sym}^d V$. 
Let $v_{1}, \dots v_{R}$ be an orthonormal basis for $\textrm{Sym}^d V$ with respect to $H$.
For any $e^* \in V^*$, we have $$\sum||\widehat{v_{i}}||^2_{\widehat{H}}([e^*])= \frac{\sum e^*_{d}(v_{i})\overline{e^*_{d}(v_{i})}}{||e^*_{d}||^2_{H}}=\frac{||e^*_{d}||^2_{H}}{||e^*_{d}||^2_{H}}=1.$$
On the other hand $$\int_{\mathbb{P}V^*} \langle \hat{v_{i}},\hat{v_{j}}\rangle_{\widehat{H}} \frac{\omega_{\widehat{H}}^{r-1}}{(r-1)!}= C \langle v_{i},v_{j} \rangle _{H}=C\delta_{ij}.$$ 
Therefore $\widehat{H}$ is a balanced metric on $(\mathbb{P}V^*,\mathcal{O}_{\mathbb{P}V^*}(d))$. It concludes the proof since balanced metrics on $(\mathbb{P}V^*,\mathcal{O}_{\mathbb{P}V^*}(d))$ are unique up to $Aut(\mathbb{P}V^*,\mathcal{O}_{\mathbb{P}V^*}(d))\cong PGL(V)$.

\end{proof}

There is a canonical representation of $\textrm{Sym}^d :GL(V) \rightarrow GL(\textrm{Sym}^d  V )$ defined as follows:
$$(\textrm{Sym}^d A)(v_{1}\dots v_{d})= Av_{1}\dots  Av_{d},$$ where $A \in GL(V)$ and $v_{1}, \dots , v_{d} \in V$. This induces a Lie algebra homomorphism $S^d: End(V) \rightarrow End(\textrm{Sym}^d V)$ defined by \begin{equation}\label{2011eq9}(S^dA)(v_{1}\dots v_{d})= \sum_{i=1}^d v_{1} \dots Av_{i} \dots v_{d}\end{equation} for any $A \in End(V).$ Suppose that the vector space $V$ is equipped with a hermitian inner product. Then the Lie algebra homomorphism $S^d$ maps hermitian endomorphisms to hermitian endomorphisms. More precisely, we have the following.

\begin{lem}

Let $h$ be a hermitian inner product on $V$. We denote the space of hermitian endomorphisms  of $V$ with respect to $h$ by $End_{h}(V)$ and the space of hermitian endomorphisms  of $\textrm{Sym}^d V$ with respect to $\textrm{Sym}^d h$ by $End_{h}(\textrm{Sym}^d V)$.   
Then $$S^d(End_{h}(V)) \subset End_{h}(\textrm{Sym}^d V).$$

\end{lem}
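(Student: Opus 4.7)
The plan is to verify directly that $S^d$ intertwines the adjoint operations on $(V,h)$ and $(\textrm{Sym}^d V, \textrm{Sym}^d h)$, i.e.\ $(S^d A)^{*_{\textrm{Sym}^d h}} = S^d(A^{*_h})$ for every $A \in End(V)$; the claim then follows by specializing to $A = A^{*_h}$.

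First I would compute the left hand side on decomposable elements. Take $v_1,\dots,v_d,w_1,\dots,w_d\in V$ and expand
$$\langle (S^d A)(v_1\cdots v_d),\,w_1\cdots w_d\rangle_{\textrm{Sym}^d h}
= \sum_{i=1}^d\frac{1}{d!}\sum_{\sigma\in S_d}\Bigl(\prod_{k\ne i}\langle v_k,w_{\sigma(k)}\rangle_h\Bigr)\langle Av_i,w_{\sigma(i)}\rangle_h,$$
using the definition of $S^d$ and of the symmetric inner product. Moving $A$ across via $\langle Av_i,w_{\sigma(i)}\rangle_h = \langle v_i,A^{*_h}w_{\sigma(i)}\rangle_h$ rewrites this as a sum over $\sigma\in S_d$ and $i$.

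Next I would compute the right hand side analogously: for each $j$, the factor $w'_j := A^{*_h}w_j$ is paired with $v_{\tau^{-1}(j)}$ inside the symmetric inner product, so
$$\langle v_1\cdots v_d,\,(S^d A^{*_h})(w_1\cdots w_d)\rangle_{\textrm{Sym}^d h}
= \sum_{j=1}^d\frac{1}{d!}\sum_{\tau\in S_d}\Bigl(\prod_{k\ne \tau^{-1}(j)}\langle v_k,w_{\tau(k)}\rangle_h\Bigr)\langle v_{\tau^{-1}(j)},A^{*_h}w_j\rangle_h.$$
Reindexing via $i=\tau^{-1}(j)$ (equivalently $j=\tau(i)$) identifies this expression term by term with the one obtained above, which proves $(S^d A)^{*_{\textrm{Sym}^d h}} = S^d(A^{*_h})$. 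Setting $A^{*_h}=A$ yields $(S^dA)^{*_{\textrm{Sym}^d h}} = S^dA$, hence $S^d(End_h(V))\subset End_h(\textrm{Sym}^d V)$.

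The main (and only) obstacle is the bookkeeping in the permutation sum: one has to carefully track where the operator $A^{*_h}$ lands when it is inserted into the $j$-th slot of $w_1\cdots w_d$, because under the pairing encoded by $\tau$ that slot is matched with the $\tau^{-1}(j)$-th $v$. The substitution $i=\tau^{-1}(j)$ is precisely what converts the two sums into the same expression. As an alternative (more conceptual) route, one could instead note that $\textrm{Sym}^d U$ is unitary with respect to $\textrm{Sym}^d h$ whenever $U$ is unitary with respect to $h$ (a one-line computation using the permutation expansion), then differentiate $U=\exp(itA)$ at $t=0$ to conclude that $iS^dA$ is skew-hermitian for any hermitian $A$; this gives the statement without any reindexing argument.
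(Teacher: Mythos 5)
Your proof is correct. The paper states this lemma without any proof (it is presented as an immediate consequence of the definitions), and your direct computation --- expanding both $\langle (S^d A)(v_1\cdots v_d), w_1\cdots w_d\rangle_{\textrm{Sym}^d h}$ and $\langle v_1\cdots v_d, (S^d A^{*_h})(w_1\cdots w_d)\rangle_{\textrm{Sym}^d h}$ over $S_d$ and matching terms via $i=\tau^{-1}(j)$ --- is exactly the verification the author is implicitly relying on; the only (standard) point worth making explicit is that checking the identity on decomposable elements suffices because they span $\textrm{Sym}^d V$.
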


Let $E$ be a holomorphic vector bundle over a K\"ahler manifold $(X,\omega)$ and $h$ be a hermitian metric on $E$. Then straightforward computation shows that 
 $$F_{(\bar{\partial}_{\textrm{Sym}^d E}, \textrm{Sym}^d h)}=S^d F_{(\bar{\partial}_{E},h)},$$ where $F_{(\bar{\partial}_{E},h)}$ is the curvature of the chern connection on $(E,h)$.
A direct consequence of the above formula is the following:

\begin{prop}

Let $h_{HE}$ be a Hermitian-Einstein metric on $E$ with respect to $\omega$, i.e. $\Lambda_{\omega} F_{h_{HE}}=\mu I_{E}$. Then $\textrm{Sym}^d h_{HE}$
is a Hermitian-Einstein metric on $\textrm{Sym}^d E$ with respect to $\omega$. 

\end{prop}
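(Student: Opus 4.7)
The plan is to combine the curvature identity $F_{(\bar\partial_{\mathrm{Sym}^d E},\mathrm{Sym}^d h)}=S^dF_{(\bar\partial_E,h)}$ (stated immediately before the proposition) with the fact that the Lie algebra homomorphism $S^d$ sends the identity to a scalar multiple of the identity. Since $S^d$ only modifies the endomorphism factor of a $\mathrm{End}(E)$-valued $(1,1)$-form while the trace operator $\Lambda_\omega$ only contracts the form part, the two operations commute. Thus
\begin{equation*}
\Lambda_\omega F_{\mathrm{Sym}^d h_{HE}}=\Lambda_\omega\bigl(S^d F_{h_{HE}}\bigr)=S^d\bigl(\Lambda_\omega F_{h_{HE}}\bigr)=S^d(\mu I_E)=\mu\,S^d(I_E).
\end{equation*}

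Next I would compute $S^d(I_E)$ directly from the defining formula \eqref{2011eq9}: for any decomposable $v_1\cdots v_d\in\mathrm{Sym}^d E$,
\begin{equation*}
(S^d I_E)(v_1\cdots v_d)=\sum_{i=1}^d v_1\cdots I_E(v_i)\cdots v_d=d\,(v_1\cdots v_d),
\end{equation*}
so $S^d(I_E)=d\,I_{\mathrm{Sym}^d E}$. Substituting back yields $\Lambda_\omega F_{\mathrm{Sym}^d h_{HE}}=d\mu\,I_{\mathrm{Sym}^d E}$, which is exactly the Hermitian--Einstein equation for $(\mathrm{Sym}^d E,\mathrm{Sym}^d h_{HE})$ with Einstein constant $d\mu$.

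There is essentially no obstacle here: the proof is a two-line consequence of the curvature identity and a one-line computation of $S^d(I_E)$. The only minor point to verify carefully is that $\Lambda_\omega$ and $S^d$ commute, which is immediate because $S^d$ acts fiberwise on the $\mathrm{End}$-factor whereas $\Lambda_\omega$ contracts with $\omega$ in the differential-form factor; the two act on tensor factors that are independent of each other.
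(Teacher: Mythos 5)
Your proof is correct and follows exactly the route the paper intends: the paper states the proposition as ``a direct consequence'' of the identity $F_{(\bar{\partial}_{\mathrm{Sym}^d E},\mathrm{Sym}^d h)}=S^dF_{(\bar{\partial}_E,h)}$, and your argument simply fills in the two details (commutation of $\Lambda_\omega$ with the fiberwise linear map $S^d$, and $S^d(I_E)=d\,I_{\mathrm{Sym}^d E}$) that the paper leaves implicit. The resulting Einstein constant $d\mu$ is also consistent with the slope computation $\mu(\mathrm{Sym}^d E)=d\,\mu(E)$.
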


\section{Asymptotic Expansion}

The goal of this section is to give an asymptotic expansion for the Bergman kernel
of $(\mathbb{P}E^*,\mathcal{O}_{\mathbb{P}E^*}(d)\otimes
\pi^*L^{k})$.

Let $(X,\omega)$ be a K\"ahler manifold of dimension $m$ and $E$
be a holomorphic vector bundle on $X$ of rank $r$. Let $L$ be an
ample line bundle on $X$ endowed with a hermitian metric $\sigma$
such that $i\bar{\partial}\partial \log \sigma=\omega$. For any hermitian metric $h$ on
$E$ , we define the volume form
$$d\mu_{g}=\frac{\omega_{g}^{r-1}}{(r-1)!}\wedge \frac{\pi^*
\omega^m}{m!},$$ where $g=\widehat{\textrm{Sym}^d h}=\widehat{h}^{\otimes d},$
 $\omega_{g}= i\bar{\partial}\partial \log g=d i\bar{\partial}\partial \log\widehat{h}$ and
$\pi:\mathbb{P}E^*\rightarrow X$ is the projection map. The goal
is to find an asymptotic expansion for the Bergman kernel of
$\mathcal{O}_{\mathbb{P}E^*}(d) \otimes L^k \rightarrow
\mathbb{P}E^*$ with respect to the $L^2$-metric defined on
$H^{0}(\mathbb{P}E^*,\mathcal{O}_{\mathbb{P}E^*}(d)\otimes
\pi^*L^{k})$. We define the $L^2$- metric using the fibre metric
$g \otimes \sigma^{\otimes k}$ and the volume form $d\mu_{g,k}$
defined as follows
\begin{equation}\label{2011eq13}d\mu_{g,k}=k^{-m}\frac{(\omega_{g} + k
\omega)^{m+r-1}}{(m+r-1)!}= \sum_{j=0}^{m} k^{j-m}
\frac{\omega_{g}^{m+r-1-j}}{(m+r-j)!} \wedge
\frac{\omega^j}{j!}.\end{equation}

In order to do that, we reduce the problem to the problem of
Bergman kernel asymptotics on $\textrm{Sym}^d E\otimes L^k \rightarrow X$. The
first step is to use the volume form $d\mu_{g}$ which is a product
volume form instead of the more complicated one $d\mu_{g,k}$. So,
we replace the volume form $d\mu_{g,k}$ with $d\mu_{g}$ and the
fibre metric $g \otimes \sigma^k$ with $g(k) \otimes \sigma^k$,
where the metrics $g(k)$ are defined on
$\mathcal{O}_{\mathbb{P}E^*}(d)$ by
\begin{equation}\label{eqH2}g(k)=k^{-m}(\sum_{j=0}^{m} k^j
f_{j})g=(f_{m}+k^{-1}f_{m-1}+...+k^{-m}f_{0})g,\end{equation} and
\begin{equation} \label{eqH3}\frac{\omega_{g}^{m+r-1-j}}{(m+r-j)!} \wedge
\frac{\omega^j}{j!}=f_{j}d\mu_{g}. \end{equation} Clearly the
$L^2$-inner products $L^2(g \otimes \sigma^k,d\mu_{g,k})$ and
$L^2(g(k) \otimes \sigma^k,d\mu_{g})$ on
$H^{0}(\mathbb{P}E^*,\mathcal{O}_{\mathbb{P}E^*}(d)\otimes
\pi^*L^{k})$ are the same. The second step is going from
$\mathcal{O}_{\mathbb{P}E^*}(d) \rightarrow \mathbb{P}E^*$ to
$\textrm{Sym}^d E \rightarrow X$. In order to do this we somehow push forward the
metric $g(k)$ to get a metric $\widetilde{g}(k)$ on $\textrm{Sym}^d  E$ (See
Definition \ref{defH4}). Then we can apply the result on the
asymptotics of the Bergman kernel on $\textrm{Sym}^d E$. The last step is to use
this to get the result.

\begin{Def}
Let $\widehat{s_{1}^k},....,\widehat{s_{N}^k} $ be an orthonormal
basis for
$H^{0}(\mathbb{P}E^*,\mathcal{O}_{\mathbb{P}E^*}(d)\otimes
\pi^*L^{k})$ with respect to $L^2(g \otimes \sigma^k, d\mu_{k,g})$. We
define
\begin{equation}\label{eqH4}\rho_{k}(g,\omega)=\sum_{i=1}^{N}
|\widehat{s_{i}^k}|_{g\otimes \sigma^k}^2.\end{equation}
\end{Def}

\begin{Def}\label{defH4}
For any hermitian form $g$ on $\mathcal{O}_{\mathbb{P}E^*}(d)$, we
define a hermitian form $\widetilde{g}$ on $\textrm{Sym}^d E$ as follow
\begin{equation}\label{eqH6}\widetilde{g}(s,t)=C_{r,d}^{-1} \int_{\mathbb{P}E_{x}^*}
g(\widehat{s},\widehat{t}\, )
\frac{\omega_{g}^{r-1}}{(r-1)!},\end{equation} for $s,t \in
\textrm{Sym}^d E_{x}.$ (See \eqref{eqN1} for definition of $C_{r,d}$.)
\end{Def}

Notice that if $g=\widehat{\textrm{Sym}^d h}$ for some hermitian metric $h$ on
$E$, Lemma \ref{lemN1} implies that $\widetilde{g}=\textrm{Sym}^d h.$
Define hermitian metrics $\widetilde{g_{j}}$'s on $\textrm{Sym}^d E$ by
\begin{equation}\label{eqH8a}\widetilde{g_{j}}(s,t)=C_{r, d}^{-1} \int_{\mathbb{P}E_{x}^*}
 f_{j}g(\widehat{s},\widehat{t} ) \frac{\omega_{g}^{r-1}}{(r-1)!},\end{equation} for
$s,t \in \textrm{Sym}^d E_{x}.$ Also we define $\Psi_{j} \in End(\textrm{Sym}^d E)$ by
\begin{equation}\label{eqH8}\widetilde{g_{j}}= \textrm{Sym}^d h \, \Psi_{j}.\end{equation}

If $h$ and $\omega$ vary in bounded family, then $\Psi_{j}$'s vary in a bounded family. More precisely, we have the following

\begin{thm}\label{2011thm 1}
Let $\nu_{0}$ be a fixed K\"ahler form on $X$  and $h_{0}$ be a fixed hermitian metric on $E$. For any positive numbers $l$ and $l'$ and any
positive integer $p$, there exists a positive number $C_{l,l',p}$
such that if $$||\omega||_{C^p(\nu_{0})}, ||h||_{C^{p+2}(h_{0},\nu_{0})}
\leq l \qquad\text{and}\qquad \inf_{x \in X}|\omega(x)^m|_{\nu_{0}(x)}\geq l',$$
then
$||\Psi_{i}||_{C^p(h_{0}, \nu_{0})} \leq C_{l,l',p}$, for any $1 \leq i\leq m$.
\end{thm}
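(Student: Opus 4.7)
The plan is to reduce the $C^p$-bound on $\Psi_j$ to a uniform pointwise $C^p$-bound on the integrand of the fiber integral defining $\widetilde{g_{j}}$. Since $\mathbb{P}E^*_x \cong \mathbb{P}^{r-1}$ is a compact model fiber, independent of the base point $x \in X$, differentiating $\widetilde{g_{j}}(s,t) = C_{r,d}^{-1}\int_{\mathbb{P}E^*_x} f_j\, g(\hat{s},\hat{t})\,\omega_g^{r-1}/(r-1)!$ under the integral sign converts the problem into estimating the integrand in $C^p$, uniformly in the fiber variable and in $x$.

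I would work in local holomorphic coordinates on $X$ together with an $h_0$-orthonormal frame of $E$; this induces a standard trivialization of $\mathbb{P}E^*$, $\mathcal{O}_{\mathbb{P}E^*}(d)$, and $\textrm{Sym}^d E$. In these coordinates the fiber metric $g = \widehat{\textrm{Sym}^d h}$ is an explicit smooth function of the components of $h$ alone, while its curvature $\omega_g = d\,i\bar{\partial}\partial \log \widehat{h}$ is expressed through $h$ and its first two base-derivatives: its vertical component is, up to the factor $d$, the Fubini--Study form in the $h$-adapted fiber coordinates, while the horizontal component encodes the curvature of $(E,h)$. The functions $f_j$, defined by $\omega_g^{m+r-1-j}\wedge\omega^j/((m+r-1-j)!\,j!) = f_j\, d\mu_g$ with $d\mu_g = (\omega_g^{r-1}/(r-1)!)\wedge(\omega^m/m!)$, then become explicit rational expressions in the components of $\omega_g$ and $\omega$, with denominator proportional to $\omega^m/\nu_0^m$ after cancellation of the common factor $\omega_g^{r-1}$ from numerator and $d\mu_g$.

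The hypotheses of the theorem then translate directly into pointwise $C^p$-bounds on the integrand. The bound $\|h\|_{C^{p+2}(h_0,\nu_0)}\le l$ yields uniform control of $g$, $\omega_g$, and all their base derivatives up to order $p$; the bound $\|\omega\|_{C^p(\nu_0)}\le l$ controls the factors involving $\omega$; and the lower bound $\inf|\omega^m|_{\nu_0}\ge l'$ keeps the denominator in $f_j$ away from zero so that the resulting $f_j$ is $C^p$-bounded by a constant depending only on $l, l', p$. Integration over the compact model fiber, followed by multiplication by $(\textrm{Sym}^d h)^{-1}$ (which is $C^p$-controlled by the hypothesis on $h$), then produces the stated estimate on $\|\Psi_j\|_{C^p(h_0,\nu_0)}$. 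The main bookkeeping difficulty will be the mixing of horizontal and vertical powers of $\omega_g$ appearing in $f_j$; this is handled cleanly by working in the $h$-orthonormal frame, in which the vertical form is the standard Fubini--Study form and all $h$-dependence is confined to the horizontal blocks, so that the remaining estimates reduce to elementary bounds on smooth functions of finitely many matrix-valued arguments lying in a compact parameter set, uniform in $x \in X$.
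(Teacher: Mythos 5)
Your proposal follows essentially the same route as the paper: compute $f_j$ explicitly as a fiber-homogeneous rational expression in $h$, $F_h$ and $\omega$, cancel the purely vertical factor, use the lower bound on $\omega^m/\nu_0^m$ to control the division by $\omega^m$ (the paper's Lemma \ref{2011lem2}), and then bound the fiber integral defining $\Psi_j$ by the $C^p$-norms of the coefficient functions (the paper's Proposition \ref{2011prop 2} and Theorem \ref{2011thm 2}). The only caveat is that the fiber coordinates and the explicit formula for $\omega_g$ require a \emph{holomorphic} frame rather than an $h_0$-orthonormal one; the paper resolves this exactly as you would need to, by choosing holomorphic frames uniformly comparable to orthonormal with bounded covariant derivatives (Lemma \ref{2011lem1}).
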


We prove Theorem \ref{2011thm 1} in Section $5$.

\begin{lem}\label{lemH2}
We have the following
\begin{enumerate}
\item $\displaystyle \Psi_{m}=I_{{Sym}^d E}.$ \item $\displaystyle
\Psi_{m-1}=\frac{d}{(r+d)} \Big(  \Lambda F_{{Sym}^d h}+ tr( \Lambda F_{h})  I_{{Sym}^d E} \Big)$.

\end{enumerate}

\end{lem}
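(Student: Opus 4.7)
Part 1 is immediate. From the expansion \eqref{2011eq13}, the $j=m$ contribution is precisely $d\mu_{g}$, so $f_{m} \equiv 1$. The definition \eqref{eqH8a} of $\widetilde{g_{m}}$ then reduces to the integral formula in Lemma \ref{lemN1} applied to $g = \widehat{\textrm{Sym}^d h}$, yielding $\widetilde{g_{m}} = \textrm{Sym}^d h$ and hence $\Psi_{m} = I_{\textrm{Sym}^d E}$.

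For Part 2, I would first compute $f_{m-1}$ pointwise at $y = [v^{*}]\in \mathbb{P}E^{*}_{x}$. Choosing $\omega$-normal coordinates at $x$ and a local holomorphic frame of $E$ with $h(x) = I$ and $\partial h(x) = 0$, a direct expansion of $\omega_{\widehat{h}} = i\bar\partial\partial\log\widehat{h}$ in fiber coordinates $\xi$ and base coordinates $z$ shows that the mixed $(dz\wedge d\bar\xi)$- and $(d\xi \wedge d\bar z)$-type terms vanish at $x$, producing at $y$ a clean splitting into the fiber Fubini--Study form $\omega_{FS}$ and a horizontal $(1,1)$-form equal to $\langle F_{h}v, v\rangle_{h}/|v|_{h}^{2}$, where $v = h^{-1}(v^{*})\in E_{x}$. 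Since $\omega_{g} = d\,\omega_{\widehat{h}}$, of the terms in $\omega_{g}^{r}\wedge(\pi^{*}\omega)^{m-1}$ only the one with exactly one horizontal factor from $\omega_{g}^{r}$ attains top degree on $\mathbb{P}E^{*}$; combining this with the identity $m\,\beta\wedge \omega^{m-1} = (\Lambda_{\omega}\beta)\omega^{m}$ for $(1,1)$-forms $\beta$ on $X$ gives
\[
f_{m-1}(y) \;=\; d\cdot \frac{\langle (\Lambda_{\omega}F_{h})\,v, v\rangle_{h}}{|v|_{h}^{2}}.
\]

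Substituting into \eqref{eqH8a} and setting $A := \Lambda_{\omega}F_{h} \in \textrm{End}(E_{x})$, we must evaluate
\[
\widetilde{g_{m-1}}(s,t) \;=\; C_{r,d}^{-1}\, d \int_{\mathbb{P}E^{*}_{x}}\frac{\langle Av,v\rangle_{h}}{|v|_{h}^{2}}\,\langle \hat{s},\hat{t}\rangle_{g}\,\frac{\omega_{g}^{r-1}}{(r-1)!}.
\]
The integrand is linear in $A$, sesquilinear in $(s,t)$, and invariant under unitary changes of frame on $(E_{x}, h)$. Since the space of $U(r)$-equivariant trilinear forms of this type is two-dimensional, spanned by $\langle S^{d}(A)s,t\rangle_{\textrm{Sym}^d h}$ and $\textrm{tr}(A)\langle s,t\rangle_{\textrm{Sym}^d h}$, the integral equals $\alpha\langle S^{d}(A)s,t\rangle_{\textrm{Sym}^d h} + \beta\,\textrm{tr}(A)\langle s,t\rangle_{\textrm{Sym}^d h}$ for universal constants $\alpha,\beta$ depending only on $r,d$. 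To determine them, specialize to $A = E_{\alpha\alpha}$ (a diagonal matrix unit) and $s = t = e_{J}$ in a monomial basis of $\textrm{Sym}^d E$ and evaluate the resulting fiber integral in affine coordinates via the standard Dirichlet--Beta formula; comparison with $\langle S^{d}(E_{\alpha\alpha})e_{J}, e_{J}\rangle_{\textrm{Sym}^d h} = j_{\alpha}\|e_{J}\|_{\textrm{Sym}^d h}^{2}$ and $\textrm{tr}(E_{\alpha\alpha}) = 1$ forces $\alpha = \beta$. After multiplying by the prefactor $C_{r,d}^{-1}\,d$ and absorbing the $d^{r-1}$ factor of $\omega_{g}^{r-1} = d^{r-1}\omega_{\widehat{h}}^{r-1}$ built into the normalization of Lemma \ref{lemN1}, the overall coefficient becomes $d/(r+d)$, yielding
\[
\Psi_{m-1} \;=\; \frac{d}{r+d}\bigl( S^{d}(\Lambda_{\omega} F_{h}) + \textrm{tr}(\Lambda_{\omega} F_{h})\, I_{\textrm{Sym}^d E}\bigr).
\]
Since $\Lambda_{\omega}$ commutes with $S^{d}$ and $F_{\textrm{Sym}^d h} = S^{d}F_{h}$, this is precisely the claimed formula. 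The main obstacle is the Dirichlet--Beta computation that produces the exact coefficient $1/(r+d)$, together with the careful tracking of the $d^{r-1}$-type normalization factors between $\omega_{g}^{r-1}$ and $\omega_{\widehat{h}}^{r-1}$.
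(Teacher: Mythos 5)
Your argument is correct and lands on the right formula, and the first half coincides with the paper's: both split $\omega_{g}=d(\omega_{FS,h}+\gamma)$ at a point with $\gamma$ the horizontal curvature term, note that only the term of $\omega_{g}^{r}\wedge\omega^{m-1}$ containing exactly one horizontal factor survives, and contract with $\omega$ to get $f_{m-1}$. (Your frame-free expression $f_{m-1}=d\,\langle(\Lambda_{\omega}F_{h})v,v\rangle_{h}/|v|_{h}^{2}$ is in fact slightly cleaner than the paper's, which ``for simplicity'' takes the whole matrix of $(1,1)$-forms $F_{h}(p)$ to be diagonal --- not something one can arrange in general, though harmless here since only the genuine hermitian endomorphism $\Lambda_{\omega}F_{h}$ enters.) Where you genuinely diverge is in evaluating the fiber integral defining $\widetilde{g_{m-1}}$: the paper computes it directly on the monomial basis $e_{1}^{\alpha_{1}}\cdots e_{r}^{\alpha_{r}}$ and reads off $\sum_{i}(\alpha_{i}+1)\Lambda\omega_{i}$, whereas you use $U(r)$-equivariance to reduce everything to two universal constants. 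That reduction is legitimate but rests on a representation-theoretic fact you assert without justification: that the adjoint representation occurs with multiplicity exactly one in $\textrm{Sym}^{d}V\otimes(\textrm{Sym}^{d}V)^{*}$, so that the space of equivariant maps $\textrm{End}(V)\to\textrm{End}(\textrm{Sym}^{d}V)$ is two-dimensional, spanned by $S^{d}$ and $\textrm{tr}(\cdot)\,I$. This is true for the highest weight $d\omega_{1}$, but it deserves a sentence of proof, since if the multiplicity were larger your single family of test cases $(E_{\alpha\alpha},e_{J})$ would not pin down the map. Beyond that, both you and the paper ultimately rely on the same Dirichlet-type fiber integral $\int|\lambda_{i}|^{2}|\lambda^{J}|^{2}(1+|\lambda|^{2})^{-(r+d+1)}$ to produce the coefficient $d/(r+d)$ and the equality of your two constants; you flag this as the remaining computation, and the paper likewise states its value without derivation, so the two arguments are at a comparable level of completeness on that point.
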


\begin{proof}

Fix a point $p \in X$. Let $e_{1},...,e_{r}$ be a local
holomorphic frame for $E$ around $p$ such that $$\langle e_{i},
e_{j}\rangle _{h}(p)=\delta_{ij}, \,\,\,\,\,\,\,\, d \langle
e_{i}, e_{j}\rangle _{h}(p)=0 .$$ For simplicity, we assume that $$\frac{i}{2\pi
}F_{h}(p)=\left(
\begin{matrix}
 \omega_{1} & 0& \cdots &0 \\
 0 & \omega_{2}& \cdots & 0\\
 \vdots &  & \ddots  &\vdots \\
 0 & 0  & \cdots & \omega_{r} \end{matrix} \right).$$

Let $\lambda_{1},...,\lambda_{r}$ be the homogeneous coordinates
on the fibre. At the fixed point $p$, we have $$\omega_{g}=
d(\omega_{\textrm{FS},h}+\frac{\sum \omega_{i} |\lambda_{i}|^2}{\sum
|\lambda_{i}|^2}).$$

Therefore,\begin{align*}\omega_{g}^{r} \wedge
\omega^{m-1}&=d \omega_{\textrm{FS},h}^{r-1}
\wedge \big(\frac{\sum \omega_{i} |\lambda_{i}|^2}{\sum
|\lambda_{i}|^2}\big) \wedge \omega^{m-1}.\end{align*}

Hence

$$f_{m-1}= d\frac{\sum |\lambda_{i}|^2\Lambda \omega_{i}}{\sum |\lambda_{i}|^2}. $$

Let $\alpha_{1}, \dots, \alpha_{r}$ be nonnegative integers such that $\alpha_{1}+ \dots +\alpha_{r}=d$. 
Therefore,

\begin{align*}\widetilde{g_{m-1}}(e_{1}^{\alpha_{1}}\dots e_{r}^{\alpha_{r}}, e_{1}^{\alpha_{1}}\dots & e_{r}^{\alpha_{r}})=C_{r,d}^{-1}\pi_{*}(f_{m-1} \tilde{g}(\widehat{e_{1}^{\alpha_{1}}\dots e_{r}^{\alpha_{r}}}, \widehat{e_{1}^{\alpha_{1}}\dots e_{r}^{\alpha_{r}}})\frac{\omega_{g}^{r-1}}{(r-1)!})\\&=C_{r,d}^{-1} C_{r} \sum \Lambda \omega_{i} \int_{\mathbb{C}^{r-1}}\frac{|\lambda_{i}|^2|\lambda_{1}|^{2\alpha_{1}}\dots |\lambda_{r}|^{2\alpha_{r}}d\lambda \wedge d\overline{\lambda}}{(1+\sum_{j=1}^{r-1}|\lambda_{j}|^{2})^{r+d+1}}\\&= C_{r,d}^{-1} C_{r}\frac{r! \alpha_{1}! \dots \alpha_{r}! }{(r+d)!}\sum_{i=1}^r (\alpha_{i}+1) \Lambda \omega_{i}.\end{align*}

Hence, $$\Psi_{m-1}=\frac{ d}{(r+d)} \Big(  \Lambda F_{{Sym}^d h}+ tr( \Lambda F_{h})  I_{{Sym}^d E} \Big)$$

\end{proof}

The following lemmas are straightforward.

\begin{lem}\label{lemH3}

$\displaystyle \widetilde{g \otimes \sigma^k}=\widetilde{g}
\otimes \sigma^k.$

\end{lem}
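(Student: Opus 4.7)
The plan is to verify the claim by directly unpacking the definitions. The pushforward operation $\widetilde{\cdot}$ from Definition \ref{defH4} is stated for metrics on $\mathcal{O}_{\mathbb{P}E^*}(d)$, but it extends in the obvious way to metrics on $\mathcal{O}_{\mathbb{P}E^*}(d)\otimes \pi^*L^k$, producing a metric on $\textrm{Sym}^d E\otimes L^k$ over $X$ via the same fiberwise integral.

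First, I would fix a point $x\in X$ and pure tensors $s=s'\otimes\tau$, $t=t'\otimes\tau'$ with $s',t'\in \textrm{Sym}^d E_x$ and $\tau,\tau'\in L^k_x$. Under the canonical identification of $\textrm{Sym}^d E_x$ with $H^0(\mathbb{P}E_x^*,\mathcal{O}_{\mathbb{P}E_x^*}(d))$, extended by tensoring pointwise with $L^k$, the associated section on the fiber is $\widehat{s'}\otimes \pi^*\tau$. Hence the integrand appearing in the definition of $\widetilde{g\otimes \sigma^k}(s,t)$ splits as
$$(g\otimes \pi^*\sigma^k)(\widehat{s},\widehat{t}\,)=g(\widehat{s'},\widehat{t'}\,)\cdot (\pi^*\sigma^k)(\tau,\tau').$$

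Next I would exploit two features of the pullback. Since $\pi^*\sigma^k$ comes from $X$, the factor $(\pi^*\sigma^k)(\tau,\tau')$ is constant along $\mathbb{P}E_x^*$, equal to the scalar $\sigma^k_x(\tau,\tau')$, and can be pulled out of the fiber integral. Since the Chern curvature decomposes as $\omega_{g\otimes \sigma^k}=\omega_g+k\pi^*\omega$ and $\pi^*\omega$ restricts to zero on any fiber, one has $\omega_{g\otimes\sigma^k}^{r-1}\big|_{\mathbb{P}E_x^*}=\omega_g^{r-1}\big|_{\mathbb{P}E_x^*}$. Substituting both into the defining integral rewrites the left-hand side as $\sigma^k_x(\tau,\tau')\cdot \widetilde{g}(s',t')|_x=(\widetilde{g}\otimes \sigma^k)(s,t)|_x$, and the general case follows by bilinearity.

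I do not anticipate any genuine obstacle; the statement is flagged as straightforward and amounts to bookkeeping. The only real content is the pair of observations that $\pi^*\sigma^k$ behaves as a scalar along each fiber and that $\pi^*\omega$ drops out of the vertical top form, both of which are immediate from the fact that $\sigma$ and $\omega$ are objects on the base $X$.
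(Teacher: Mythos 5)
Your proof is correct and is exactly the verification the paper has in mind: the paper offers no proof at all, simply declaring the lemma straightforward, and your unpacking (the pulled-back factor $\pi^*\sigma^k$ is constant on each fiber $\mathbb{P}E_x^*$, and $\pi^*\omega$ dies on the fiber so $\omega_{g\otimes\sigma^k}^{r-1}$ and $\omega_g^{r-1}$ agree there) supplies precisely the omitted bookkeeping. No gaps.
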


\begin{lem}\label{lemH4} Let $H$ be a hermitian metric on $\textrm{Sym}^d E$ and $s_{1},..., s_{N}$ be a basis for $H^0(X,{Sym}^d E)$. Then
$$\sum |\widehat{s_{i}}([v^*])|^2_{\widehat{H}}= Tr \big( B \lambda_{d}(v^*,H)          \big),$$
where $B=\sum s_{i} \otimes s_{i}^{*_{H}}.$

\end{lem}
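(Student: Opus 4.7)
The proof amounts to unwinding the definitions on the two sides and observing that they coincide via a Riesz-duality calculation. First I would evaluate the LHS pointwise at $[v^*] \in \mathbb{P}E_x^*$. By the definition of the metric $\widehat{H}$ on $\mathcal{O}_{\mathbb{P}E^*}(d)$ given earlier,
$$|\widehat{s_i}([v^*])|^2_{\widehat{H}} = \frac{|v^{*\otimes d}(s_i)|^2}{|v^{*\otimes d}|_H^2},$$
where $v^{*\otimes d} = v^* \cdots v^* \in \textrm{Sym}^d E_x^*$ is paired with $s_i \in \textrm{Sym}^d E_x$ in the natural way, and $|\cdot|_H$ in the denominator refers to the inner product on $\textrm{Sym}^d E_x^*$ induced by $H$.

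Next I would convert $v^{*\otimes d}(s_i)$ into an $H$-inner product via Riesz duality. Let $w \in \textrm{Sym}^d E_x$ be the unique vector with $v^{*\otimes d}(u) = \langle u, w\rangle_H$ for all $u \in \textrm{Sym}^d E_x$; this is precisely the $w$ appearing in Def.\ \ref{defN1}, so $\lambda_d(v^*, H) = |w|_H^{-2}\, w \otimes w^{*_H}$ is the rank-one $H$-projector onto the line $\mathbb{C}w$. With this identification, $v^{*\otimes d}(s_i) = \langle s_i, w\rangle_H$ and $|v^{*\otimes d}|_H^2 = |w|_H^2$, so
$$\sum_i |\widehat{s_i}([v^*])|^2_{\widehat{H}} = \frac{1}{|w|_H^2}\sum_i |\langle s_i, w\rangle_H|^2.$$

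For the RHS, since $\lambda_d(v^*, H)$ is rank one I would apply the standard identity $Tr\bigl(B \cdot (\alpha \otimes \beta^{*_H})\bigr) = \langle B\alpha, \beta\rangle_H$ to obtain
$$Tr\bigl(B\, \lambda_d(v^*, H)\bigr) = \frac{1}{|w|_H^2}\langle Bw, w\rangle_H.$$
Writing out $B = \sum_i s_i \otimes s_i^{*_H}$ gives $Bw = \sum_i s_i \langle w, s_i\rangle_H$, hence $\langle Bw, w\rangle_H = \sum_i |\langle s_i, w\rangle_H|^2$, matching the expression for the LHS. There is no substantial obstacle; the only genuinely conceptual step is recognizing $\lambda_d(v^*, H)$ as the rank-one $H$-projector onto the Riesz dual of $v^{*\otimes d}$, after which the rest is pure bookkeeping — which is presumably why the author labels this lemma as straightforward.
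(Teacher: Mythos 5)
Your proof is correct, and it fills in exactly the definition-unwinding that the paper itself omits by declaring the lemma ``straightforward'': identify $\lambda_d(v^*,H)$ as the normalized rank-one projector onto the Riesz dual $w$ of $v^{*\otimes d}$, then compare $\sum_i|\langle s_i,w\rangle_H|^2/|w|_H^2$ on both sides. No discrepancy with the paper's (absent) argument.
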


\begin{proof}[Proof of Theorem  \ref{thmH1}]
Define \begin{equation}\label{2011eq3}h(k)=C_{r,d}^{-1}\pi_{*} \big(g\frac{d\mu_{g,k}}{d\mu_{g}}\frac{\omega_{g}^{r-1}}{(r-1)!} \big ),\end{equation} i.e. for any $x \in X$ and $s, t \in \textrm{Sym}^d E_{x}$, we have 
 $$\langle s, t \rangle_{h(k)}=C_{r,d}^{-1}\int_{\mathbb{P}E_{x}^*} \langle \widehat{s},\widehat{t}\, \rangle_{g}\frac{d\mu_{g,k}}{d\mu_{g}} \frac{\omega_{g}^{r-1}}{(r-1)!}.$$
Therefore \eqref{2011eq13}, \eqref{eqH3} and \eqref{eqH8} imply that 
\begin{equation}\label{eqH9}h(k)=\sum_{j=0}^{m}
k^{j-m}\widetilde{g}_{j}=\textrm{Sym}^d h  (\sum_{j=0}^{m}
k^{j-m}\Psi_{j}).\end{equation} Let $B_{k}(h(k),\omega)$ be the
Bergman kernel of $\textrm{Sym}^d E\otimes L^k$ with respect to the $L^2$-metric
defined by the hermitian metric $h(k)\otimes \sigma^k$ on
$\textrm{Sym}^d E\otimes L^k$ and the volume form $\frac{ \omega^m}{m!}$ on $X$.
Therefore, if $s_{1},..., s_{N}$ is an orthonormal basis for
$H^0(X,\textrm{Sym}^d E\otimes L^k)$ with respect to the $L^2(h(k)\otimes
\sigma^k,\frac{ \omega^m}{m!})$, then
\begin{equation} B_{k}(h(k),\omega)=\sum s_{i}\otimes s_{i}^{*_{h(k)\otimes
\sigma^k}},\end{equation} We define $\widetilde{B}_{k}(h,\omega)$
as follows:
\begin{equation}\widetilde{B}_{k}(h,\omega)=\sum s_{i}\otimes s_{i}^{*_{\textrm{Sym}^d h\otimes
\sigma^k}}.\end{equation} Let
$\widehat{s_{1}},....,\widehat{s_{N}} $ be the corresponding basis
for $H^{0}(\mathbb{P}E^*,\mathcal{O}_{\mathbb{P}E^*}(d)\otimes
L^{k})$. Hence,
\begin{align*}\int_{\mathbb{P}E^*} \langle \widehat{s_{i}}, \widehat{s_{j}}
\rangle_{g \otimes \sigma^k}d\mu_{g,k}&=\int_{\mathbb{P}E^*}
\langle \widehat{s_{i}}, \widehat{s_{j}} \rangle_{g \otimes
\sigma^k}(\sum_{j=0}^{m} k^j f_{j})d\mu_{g}\\&= \int_{\mathbb{P}E^*} \langle \widehat{s_{i}}, \widehat{s_{j}}
\rangle_{g(k) \otimes \sigma^k}d\mu_{g}\\&=C_{r, d} \int_{X}\langle
s_{i}, s_{j} \rangle_{h(k) \otimes
\sigma^k}\frac{\omega^m}{m!}\\&=C_{r, d}\delta_{ij}.\end{align*} Therefore
$\frac{1}{\sqrt{C_{r,d}}}\widehat{s_{1}},....,\frac{1}{\sqrt{C_{r, d}}}\widehat{s_{N}}
$ is an orthonormal basis for
$H^{0}(\mathbb{P}E^*,\mathcal{O}_{\mathbb{P}E^*}(d)\otimes L^{k})$
with respect to $L^2(g \otimes \sigma^k, d\mu_{k,g})$.  Hence Lemma
\ref{lemH4} implies that
$$C_{r,d}\rho_{k}(g, \omega)=Tr \big(  \lambda_{d}(v^*, \textrm{Sym}^d h)
\widetilde{B}_{k}(h,\omega)       \big).$$ Now, in order to
conclude the proof, it suffices to show that there exist smooth
endomorphisms $A_{i} \in \Gamma(X,End(\textrm{Sym}^d E))$ such that
$$ \widetilde{B}_{k}(h,\omega)\sim k^m+A_{1}k^{m-1}+\dots              .$$
Let $B_{k}(\textrm{Sym}^d h,\omega)$ be the Bergman kernel of $\textrm{Sym}^d E\otimes L^k$ with
respect to the $L^2(\textrm{Sym}^d h\otimes \sigma^k, \frac{\omega^m}{m!})$. A fundamental result on
the asymptotics of the Bergman kernel (\cite{C}, \cite{Z}, \cite{Lu}, \cite{W}) states
that there exists an asymptotic expansion
$$B_{k}(\textrm{Sym}^d h,\omega)\sim k^m+B_{1}(\textrm{Sym}^d h)k^{m-1}+\dots,$$ where $$B_{1}(\textrm{Sym}^d h)=
\frac{i}{2\pi} \Lambda F_{(\textrm{Sym}^d E,\textrm{Sym}^d h)}+ \frac{1}{2} S(\omega) I_{\textrm{Sym}^d E}
.$$(See also \cite{BBS}.) Moreover this expansion holds
uniformly for any $h$ in a bounded family. Therefore, we can
Taylor expand the coefficients $B_{i}(\textrm{Sym}^d h)$'s. We conclude that for
endomorphisms $\Phi_{1},...,\Phi_{M}$,
$$B_{k}(\textrm{Sym}^d h(I+\sum_{i=0}^{M} k^{-i}\Phi_{i}),\omega)\sim
k^m+B_{1}(\textrm{Sym}^d h)k^{m-1}+\dots .$$Note that $B_{1}(\textrm{Sym}^d h)$ in the above
expansion does not depend on $\Phi_{i}$'s and is given as before
by $$B_{1}(\textrm{Sym}^d h)=
\frac{i}{2\pi} \Lambda F_{(\textrm{Sym}^d E,\textrm{Sym}^d h)}+ \frac{1}{2} S(\omega) I_{\textrm{Sym}^d E}.$$ On the other hand
\begin{align*}B_{k}(h(k),\omega)=\sum s_{i}\otimes
s_{i}^{*_{\widetilde{g(k)}\otimes \sigma^k}}&=(\sum s_{i}\otimes
s_{i}^{*_{h\otimes \sigma^k}})(\sum_{j=0}^m k^{j-m}\Psi_{j})\\&=\widetilde{B}_{k}(h,\omega)(\sum_{j=0}^m
k^{j-m}\Psi_{j}).\end{align*} Therefore,
\begin{align*}\widetilde{B}_{k}(\textrm{Sym}^d h,\omega)&=B_{k}(h(k),\omega)(\sum_{j=0}^m k^{j-m}\Psi_{j})^{-1}\\& \sim k^m+(B_{1}(\textrm{Sym}^d h)-\Psi_{m-1})k^{m-1}+\dots  .\end{align*} 
We have \begin{align}\label{2011eq12}B_{1}-\Psi_{m-1}&=\frac{ir}{2\pi(r+d)} \Lambda F_{\textrm{Sym}^d h}+ \frac{1}{2} S(\omega) I_{\textrm{Sym}^d E}-\frac{id}{2\pi(r+d)} tr( \Lambda F_{h})  I_{{Sym}^d E}\notag \\&=\frac{ir}{2\pi(r+d)}  \Big (  \Lambda F_{\textrm{Sym}^d h} - \frac{1}{\textrm{rank}\big(\textrm{Sym}^d E\big)} tr( \Lambda F_{\textrm{Sym}^d h} )I  \Big )+ \frac{1}{2} S(\omega)I \end{align}


Notice that Theorem \ref{2011thm 1} implies that if $h$ and $\omega$ vary in a bounded family and $\omega$ is bounded from below, then $\Psi_{1},..,\Psi_{m}$ vary
in a bounded family. Therefore the asymptotic expansion that we obtained for $\widetilde{B}_{k}(h,\omega)$ is uniform as long as $h$ and $\omega$ vary in a bounded family and $\omega$ is bounded from below.

\end{proof}

Suppose that $E$ admits a Hermitian-Einstein metric $h_{HE}$ and $(X, L)$ admits a constant scalar curvature K\"ahler metric $\omega_{CSCK}$.
If the linearization of $A_{1}$ at $(h_{HE}, \omega_{CSCK})$ were surjective, then we would be able to construct sequences of almost balanced metrics. 
The problem is that the image of the linearization of $A_{1}$ consists only those endomorphisms of $\textrm{Sym}^d E$ that are induced from endomorphisms of $E$. 
Therefore we need to generalize Theorem \ref {thmH1} .

Let $\Phi \in \Gamma(\textrm{End} (\textrm{Sym}^d E)) $ be hermitian with respect to  $\textrm{Sym}^d h$. As before, let $g$ be the Fubini-Study metric on $\mathcal{O}_{\mathbb{P}E^*}(d)$ induced by the hermitian metric $h$. Define hermitian metrics \begin{equation} \label{2011eq5}h_{t}(\Phi)=\textrm{Sym}^d h(I+t\Phi) \,\,\,\, \textrm{and} \,\,\, g_{k}(\Phi)=\widehat{h_{k^{-1}}(\Phi)}\end{equation} on $\textrm{Sym}^d E$ and $\mathcal{O}_{\mathbb{P}E^*}(d)$ respectively. We define the function $ F(\Phi) \in C^{\infty}(\mathbb{P}E^*)$ by $$F(\Phi)([v])=||v_{d}||_{\textrm{Sym}^d h}^{-2}\frac{d}{dt}\Big|_{t=0} ||v_{d}||_{h_{t}(\Phi)}^2,\,\,\,\,\,\, v \in E^*.$$ Here $v_{d}=v \dots v$ and note that $||v_{d}||_{\textrm{Sym}^d h}^2=||v||_{h}^{2d}.$
Simple calculations show that \begin{equation}\label{2011eq6} F(\Phi)([v])=tr(\lambda_{d}(v,\textrm{Sym}^d h)\Phi). \end{equation} \begin{equation} \label{2011eq7}\frac{g_{k}(\Phi)}{g}([v])=1+ k^{-1 } F(\Phi)+O(k^{-2}).\end{equation}
Thus, \begin{equation} \label{2011eq8}\omega_{g_{k}(\Phi)}=i\bar{\partial}\partial \log g_{k}(\Phi)=\omega_{ g}+  k^{-1 }i\bar{\partial}\partial F(\Phi)+O(k^{-2}).\end{equation}
We define the volume forms $d\mu_{g,\Phi,k}$ on $\mathbb{P}E^*$ as follows
\begin{equation}d\mu_{g,\Phi,k}=k^{-m}\frac{(\omega_{g_{k}(\Phi)} + k \omega)^{m+r-1}}{(m+r-1)!}.\end{equation}
For any smooth function $F \in C^{\infty}(\mathbb{P}E^*) $, define $$  \widetilde{\triangle}F =  \frac{(r-1) i\bar{\partial}\partial F \wedge \omega_{g}^{r-2} \wedge \omega^m}{ \omega_{g}^{r-1} \wedge \omega^m}.     $$
Therefore \eqref{eqH3},  \eqref{2011eq7} and \eqref{2011eq8} imply that 

\begin{equation} \label{2011eq10}g_{k}(\Phi)d\mu_{g,\Phi,k}=\Big(1+k^{-1} \big(    f_{m-1}+ F(\Phi)+ \widetilde{\triangle}F(\Phi)  \big)+ O(k^{-2})\Big) g d\mu_{g}    .\end{equation} Recall that $d\mu_{g}=\frac{\omega_{g}^{r-1}}{(r-1)!}\wedge \frac{\omega^{n}}{n!}.$


\begin{Def}{\label {defN2}}

Define the bundle map   $T: \textrm{End} (\textrm{Sym}^d E) \rightarrow \textrm{End} (\textrm{Sym}^d E)$ by 

\begin{equation}\label{2011eq11}\langle s, (T\Phi)(t) \rangle_{\textrm{Sym}^d h}= C_{r,d}^{-1} \int_{\mathbb{P}E_{x}^*} (  F(\Phi)+ \widetilde{\triangle}F(\Phi) )\langle \widehat{s},\widehat{t}\, \rangle_{g} \frac{\omega_{g}^{r-1}}{(r-1)!},\end{equation} for any $x \in X$ and $s, t \in \textrm{Sym}^d E_{x}$.

\end{Def}
We will use the following Lemmas in the proof of Corollary \ref{corH2}. 

\begin{lem}\label{2011lem4}
For any $\Phi \in \Gamma(\textrm{End} (\textrm{Sym}^d E)) $ hermitian with respect to  $\textrm{Sym}^d h$, we have $Tr(T\Phi)=Tr(\Phi).$
\end{lem}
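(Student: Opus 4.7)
The plan is to evaluate $Tr(T\Phi)(x)$ by substituting an orthonormal basis $\{s_i\}$ of $(\textrm{Sym}^d E_x,\textrm{Sym}^d h)$ into the definition \eqref{2011eq11} and handling the two resulting fiber integrals separately. The key first reduction is the identity $\sum_i |\widehat{s_i}|_g^2 \equiv 1$ on $\mathbb{P}E_x^*$, established in the course of Lemma~\ref{lemN1}; inserting it into $\sum_i\langle s_i,(T\Phi)s_i\rangle_{\textrm{Sym}^d h}$ collapses the basis-dependent factor and yields
$$Tr(T\Phi)(x) = C_{r,d}^{-1}\int_{\mathbb{P}E_x^*}\bigl(F(\Phi) + \widetilde{\triangle}F(\Phi)\bigr)\frac{\omega_g^{r-1}}{(r-1)!}.$$

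For the $F(\Phi)$ integral, I will use \eqref{2011eq6} to rewrite $F(\Phi) = tr(\lambda_d(v,\textrm{Sym}^d h)\Phi)$; unwinding Definition~\ref{defN1} gives the pointwise identity $\langle s,\lambda_d(v,\textrm{Sym}^d h)t\rangle_{\textrm{Sym}^d h} = \langle \widehat{s},\widehat{t}\rangle_g([v^*])$. Lemma~\ref{lemN1} then integrates this to
$$\int_{\mathbb{P}E_x^*}\lambda_d(v,\textrm{Sym}^d h)\,\frac{\omega_g^{r-1}}{(r-1)!} = C_{r,d}\,I_{\textrm{Sym}^d E_x},$$
from which $\int_{\mathbb{P}E_x^*} F(\Phi)\,\omega_g^{r-1}/(r-1)! = C_{r,d}\,Tr(\Phi)$ follows by taking the trace against $\Phi$.

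The only mildly technical step is to show that $\int_{\mathbb{P}E_x^*}\widetilde{\triangle}F(\Phi)\,\omega_g^{r-1}/(r-1)! = 0$. Rearranging the definition of $\widetilde{\triangle}F$ produces the equality of top forms on $\mathbb{P}E^*$
$$\widetilde{\triangle}F\cdot\frac{\omega_g^{r-1}}{(r-1)!}\wedge\omega^m = i\bar\partial\partial F \wedge \frac{\omega_g^{r-2}}{(r-2)!}\wedge\omega^m,$$
and since $\omega^m$ is the pull-back of a nowhere-vanishing top form on $X$, the projection formula $\pi_*(\alpha\wedge\pi^*\omega^m) = \pi_*(\alpha)\cdot\omega^m$ cancels it from both sides to give
$$\int_{\mathbb{P}E_x^*}\widetilde{\triangle}F\,\frac{\omega_g^{r-1}}{(r-1)!} = \int_{\mathbb{P}E_x^*}i\bar\partial\partial F\wedge\frac{\omega_g^{r-2}}{(r-2)!}.$$
Because $\omega_g$ is closed, the right-hand integrand equals $d\bigl(i\partial F\wedge\omega_g^{r-2}/(r-2)!\bigr)$; since exterior differentiation commutes with restriction to the closed fiber $\mathbb{P}E_x^*\cong\mathbb{P}^{r-1}$, Stokes' theorem forces this integral to vanish. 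Combining the two evaluations gives $Tr(T\Phi) = C_{r,d}^{-1}\cdot C_{r,d}\,Tr(\Phi) = Tr(\Phi)$, as required. The point requiring some care is the projection-formula step, since $\widetilde{\triangle}F$ is not manifestly a fiberwise Laplacian of $F$ given that $\omega_g$ has both horizontal and vertical components; it is precisely the pull-back nature of $\omega^m$ which permits rewriting the integral in a manifestly exact form to which Stokes' theorem applies.
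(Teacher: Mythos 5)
Your proof is correct and follows essentially the same route as the paper: reduce via an orthonormal frame and the identity $\sum_I |\widehat{E_I}|_g^2=1$, discard the $\widetilde{\triangle}F(\Phi)$ term, and evaluate the remaining fiber integral of $F(\Phi)$ using \eqref{2011eq6} and Lemma \ref{lemN1}. The only difference is that you spell out the Stokes-theorem argument for $\int_{\mathbb{P}E_x^*}\widetilde{\triangle}F(\Phi)\,\omega_g^{r-1}=0$, which the paper passes over silently; that justification is valid and a worthwhile addition.
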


\begin{proof}

Let $e_{1}, \dots e_{r}$ be an orthonormal local frame for $E$ with respect to $h$ and $E_{I}$ be the corresponding orthonormal local frame for $\textrm{Sym}^d E$ with respect to $\textrm{Sym}^d h$. We have 
\begin{align*}  Tr(T\Phi)= \sum_{I} \langle E_{I}, (T\Phi)(E_{I}) \rangle_{\textrm{Sym}^d h}&=\sum_{I} C_{r,d}^{-1}\int_{\textrm{Fiber}} (F+\widetilde{\triangle}F)\langle \widehat{E_{I}}, \widehat{E_{I}} \rangle_{g} \frac{\omega_{g}^{r-1}}{(r-1)!} \\&=C_{r,d}^{-1}\int_{\textrm{Fiber}} (F+\widetilde{\triangle}F)\sum_{I}|\widehat{E_{I}}|_{g}^2 \frac{\omega_{g}^{r-1}}{(r-1)!}\\&=C_{r,d}^{-1}\int_{\textrm{Fiber}} (F+\widetilde{\triangle}F)\frac{\omega_{g}^{r-1}}{(r-1)!}\\&=C_{r,d}^{-1}\int_{\textrm{Fiber}} F\frac{\omega_{g}^{r-1}}{(r-1)!}. \end{align*}
On the other hand, \eqref{2011eq6} implies that $$\int_{\textrm{Fiber}} F\frac{\omega_{g}^{r-1}}{(r-1)!}=C_{r,d} Tr(\Phi).$$

\end{proof}

\begin{lem}\label{2011lem3}

For any $\varphi \in \Gamma(\textrm{End} ( E))$, we have $T(S^d\varphi)= S^d \varphi $. Conversely,
if $T\Phi=\Phi$ for some $\Phi \in \Gamma(\textrm{End} (\textrm{Sym}^d E))$, then there exists $\varphi \in \Gamma(\textrm{End} ( E))$ such that $\Phi=S^d \varphi$. (See \eqref{2011eq9} for definition of Lie algebra homomorphim $S^d$.)

\end{lem}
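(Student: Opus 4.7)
The overall strategy is to identify $T$ as the linearization at $H=\textrm{Sym}^d h$ of the pushforward operator $P\colon H\mapsto\widetilde{\widehat{H}}$ on hermitian metrics on $\textrm{Sym}^d E$, and then exploit Lemma \ref{lemN1}, which says that the fixed-point set of $P$ is exactly $\mathcal{N}:=\{\textrm{Sym}^d h':h'\in\mathrm{Met}(E)\}$. Since $T$ and $S^d$ are $\mathbb{C}$-linear and both preserve hermiticity (the functions $F(\Phi)$ and $\widetilde{\triangle}F(\Phi)$ are real-valued for hermitian $\Phi$, whence the form $(s,u)\mapsto\langle s,T(\Phi)u\rangle_{\textrm{Sym}^d h}$ in \eqref{2011eq11} is hermitian), it suffices to treat hermitian $\varphi$ and $\Phi$. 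For such $\Phi$ set $H_t:=\textrm{Sym}^d h(I+t\Phi)$; differentiating $\widetilde{\widehat{H_t}}(s,u)=C_{r,d}^{-1}\int\langle\hat s,\hat u\rangle_{g_t(\Phi)}\,\omega_{g_t(\Phi)}^{r-1}/(r-1)!$ at $t=0$ and using \eqref{2011eq7}--\eqref{2011eq8} together with the fibre identity $\omega_g^{r-2}\wedge i\bar\partial\partial F/(r-2)!=\widetilde{\triangle}F\cdot\omega_g^{r-1}/(r-1)!$ yields the clean linearization formula
$$\left.\tfrac{d}{dt}\right|_{t=0}P(H_t)\;=\;\textrm{Sym}^d h\cdot T(\Phi)$$
by the definition \eqref{2011eq11} of $T$.

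For the forward implication, the family $h_t:=h(I+t\varphi)$ on $E$ induces $\textrm{Sym}^d h_t=\textrm{Sym}^d h\,(I+tS^d\varphi)+O(t^2)$, because $\textrm{Sym}^d$ is a representation and hence $\textrm{Sym}^d(I+t\varphi)=I+tS^d\varphi+O(t^2)$. With $\Phi:=S^d\varphi$ this says $H_t$ coincides with the curve $\textrm{Sym}^d h_t\subset\mathcal N$ to first order in $t$. Since Lemma \ref{lemN1} yields $P|_{\mathcal N}=\mathrm{id}$, differentiation of $P(\textrm{Sym}^d h_t)=\textrm{Sym}^d h_t$ at $t=0$ combined with the linearization formula gives $\textrm{Sym}^d h\cdot T(S^d\varphi)=\textrm{Sym}^d h\cdot S^d\varphi$, so $T(S^d\varphi)=S^d\varphi$.

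For the converse, assume $T\Phi=\Phi$; the linearization formula then gives $P(H_t)=H_t+O(t^2)$, so $\Phi$ lies in the $1$-eigenspace of $T$. The forward direction establishes the inclusion $S^d(\textrm{End}(E))\subseteq\ker(T-I)$, and the remaining task is to prove equality. A clean approach is representation-theoretic: the integral defining $T$ depends only on $h$ and on the $U(r)$-homogeneous Fubini--Study structure of $\mathbb{P}E_x^*$, so $T$ is fibrewise $U(r)$-equivariant and, by Schur's lemma, acts as a scalar on each $U(r)$-isotypic summand of $\textrm{End}(\textrm{Sym}^d V)$. The image $S^d(\textrm{End}(V))$ realizes exactly the trivial summand together with the unique adjoint summand (corresponding to $\mathbb{C}\cdot I\oplus\mathfrak{su}(V)\subset\textrm{End}(V)$), both pinned to eigenvalue $1$ by the forward direction. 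The main obstacle is to verify that on every \emph{other} $U(r)$-isotypic component of $\textrm{End}(\textrm{Sym}^d V)$ the $T$-eigenvalue is strictly less than $1$; this can be done by evaluating $T$ on a highest-weight vector in each such summand and reducing the fibre integral to the explicit $\mathbb{P}^{r-1}$ integral against $(1+\sum_j|\xi_j|^2)^{-(r+d)}d\xi\wedge d\bar\xi$ from \eqref{eqN1}, using the rank-one presentation $\lambda_d(v,\textrm{Sym}^d h)=\|w\|_{\textrm{Sym}^d h}^{-2}\,w\otimes w^{*_{\textrm{Sym}^d h}}$ with $w=\bar v_d$ and inhomogeneous fibre coordinates.
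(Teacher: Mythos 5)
Your forward direction is sound and is essentially the paper's own argument: identify $T$ as the $k^{-1}$-coefficient of the fibrewise pushforward $P\colon H\mapsto\widetilde{\widehat H}$, note that $\mathrm{Sym}^d\big(h(I+t\varphi)\big)=\mathrm{Sym}^d h\,(I+tS^d\varphi)+O(t^2)$, and differentiate the identity $P|_{\mathcal N}=\mathrm{id}$ supplied by the first half of Lemma \ref{lemN1}. That part needs no repair.

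The converse is where your proposal has a genuine gap. You correctly observe that the linearization argument only yields $S^d(\mathrm{End}(E))\subseteq\ker(T-I)$ and that the reverse inclusion is the real content; but your plan for it --- decompose $\mathrm{End}(\mathrm{Sym}^d V)$ into $U(r)$-isotypic pieces $W_0\oplus W_1\oplus\cdots\oplus W_d$ with $W_0\oplus W_1=S^d(\mathrm{End}(V))$, invoke Schur's lemma, and then ``verify that on every other component the eigenvalue is strictly less than $1$'' --- stops exactly at the step that carries all the weight. You never compute, or even bound, the eigenvalues $c_a$ of $T$ on $W_a$ for $a\ge 2$, so the converse is asserted rather than proved. (The equivariance of $T$ and the multiplicity-freeness of $\mathrm{Sym}^d V\otimes \mathrm{Sym}^d V^*$ do make this a viable strategy, and one only needs $c_a\ne 1$ rather than $c_a<1$; but the fibre integral on a highest-weight vector of each $W_a$, including the $\widetilde{\triangle}F$ contribution, is a genuine computation and it is the entire point of the converse.) The paper closes this direction by a different route that avoids any eigenvalue computation: the second half of Lemma \ref{lemN1} identifies the exact fixed points of $P$ as precisely the metrics $\mathrm{Sym}^d h'$ --- this is the uniqueness of balanced metrics on $(\mathbb{P}V^*,\mathcal{O}_{\mathbb{P}V^*}(d))$ up to $PGL(V)$ --- and then matches $k^{-1}$-coefficients in $P\big(\mathrm{Sym}^d h(I+k^{-1}\Phi)\big)=\mathrm{Sym}^d h\big(I+k^{-1}T\Phi\big)+O(k^{-2})$, so that $T\Phi=\Phi$ forces $\mathrm{Sym}^d h(I+k^{-1}\Phi)$ to be of the form $\mathrm{Sym}^d h_k$ to first order, i.e.\ $\Phi\in S^d(\mathrm{End}(E))$. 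If you want to keep your structure, the cleanest repair is to import that uniqueness statement (which you already cite for the fixed-point set of $P$) rather than attempt the isotypic integrals.
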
 

\begin{proof}

The equations \eqref{2011eq7} and \eqref{2011eq8} imply that $$g_{k}(\Phi)\omega_{g_{k}(\Phi)}^{r-1}=\Big(1+k^{-1} \big(    F(\Phi)+ \widetilde{\triangle}F(\Phi)  \big)+ O(k^{-2})\Big) g \omega_{g}^{r-1}.$$ Therefore 
\begin{align*}\int_{\mathbb{P}E_{x}^*} \langle \widehat{s},\widehat{t}\, \rangle_{g_{k}(\Phi)} \frac{\omega_{g_{k}(\Phi)}^{r-1}}{(r-1)!}&=  \int_{\mathbb{P}E_{x}^*} \Big(1+  \big(F(\Phi)+ \widetilde{\triangle}F(\Phi)\big)k^{-1}+O(k^{-2}) \Big)\langle \widehat{s},\widehat{t}\, \rangle_{g} \frac{\omega_{g}^{r-1}}{(r-1)!}\\&= \int_{\mathbb{P}E_{x}^*}\langle \widehat{s},\widehat{t}\, \rangle_{g} \frac{\omega_{g}^{r-1}}{(r-1)!}+k^{-1}\int_{\mathbb{P}E_{x}^*}   \big(F(\Phi)+ \widetilde{\triangle}F(\Phi)\big)\langle \widehat{s},\widehat{t}\, \rangle_{g} \frac{\omega_{g}^{r-1}}{(r-1)!}\\&+O(k^{-2}). \end{align*}
Lemma \ref{lemN1} and \eqref{2011eq11} imply that 
$$C_{r,d}^{-1}\int_{\mathbb{P}E_{x}^*} \langle \widehat{s},\widehat{t}\, \rangle_{g_{k}(\Phi)} \frac{\omega_{g_{k}(\Phi)}^{r-1}}{(r-1)!}= \langle s,t\rangle_{\textrm{Sym}^d h}+k^{-1}\langle s,(T\Phi)(t)\rangle_{\textrm{Sym}^d h}+O(k^{-2}),$$since $g=\widehat{\textrm{Sym}^d h}$. On the other hand, Lemma \ref{lemN1} implies that $$C_{r,d}^{-1}\int_{\mathbb{P}E_{x}^*} \langle \widehat{s},\widehat{t}\, \rangle_{g_{k}(\Phi)} \frac{\omega_{g_{k}(\Phi)}^{r-1}}{(r-1)!}=\langle s,t\rangle_{\textrm{Sym}^d h(I+k^{-1}\Phi)}$$ if and only if there exists  $\varphi \in \Gamma(\textrm{End} ( E))$ such that $\Phi=S^d \varphi$. This concludes the proof.

\end{proof}

\begin{Def}

 Let $h$ be a hermitian metric on $E$ and  $\Phi \in \Gamma(\textrm{End} (\textrm{Sym}^d E)) $ be hermitian with respect to  $\textrm{Sym}^d h$.
We define $$\rho_{k}(g, \omega, \Phi):= \rho_{k}( g_{k}(\Phi), \omega)=\rho_{k}( \widehat{\textrm{Sym}^d h(I+k^{-1}\Phi)}, \omega),$$ where $g=\widehat{\textrm{Sym}^d h}.$

\end{Def}

In order to prove Theorem \ref{thmH1a}, we need to find an asymptotic expansion for the Bergman kernel $\rho_{k}(g, \omega, \Phi):= \rho_{k}( g_{k}(\Phi), \omega).$ By definition $\rho_{k}(g, \omega, \Phi)$ is the Bergman kernel of
$\mathcal{O}_{\mathbb{P}E^*}(d) \otimes L^k \rightarrow
\mathbb{P}E^*$ with respect to the inner product $L^2(g_{k}(\Phi), d\mu_{g,k,\Phi} )$ defined on
$H^{0}(\mathbb{P}E^*,\mathcal{O}_{\mathbb{P}E^*}(d)\otimes
\pi^*L^{k})$. Clearly the $L^2-$ inner products $L^2(g_{k}(\Phi), d\mu_{g,k,\Phi} )$ and $L^2(g_{k}(\Phi)\frac{d\mu_{g,k,\Phi}}{d\mu_{g}}, d\mu_{g} )$ are the same. Therefore we can replace the complicated volume form $d\mu_{g,k, \Phi}$ by the product volume form  $d\mu_{g}$ and the fibre metric $g_{k}(\Phi) \otimes \sigma^k$ with $\frac{d\mu_{g,k,\Phi}}{d\mu_{g}} g_{k}(\Phi)\otimes \sigma^k$. Then we push forward the
metric $g_{k}(\Phi)\frac{d\mu_{g,k,\Phi}}{d\mu_{g}}$ to get a metric $h(k, \Phi)$ on $\textrm{Sym}^d  E$ (See
Definition \ref{2011def1}). In order to conclude the theorem, we apply the result on the
asymptotics of the Bergman kernel on $\textrm{Sym}^d E$ to the metric $h(k,\Phi)$. 

\begin{Def}\label{2011def1}

We define the hermitian metric $h(k, \Phi)$ on $\textrm{Sym}^d E$ as follows:
 \begin{equation}\label{2011eq3}h(k, \Phi)=C_{r,d}^{-1}\pi_{*} \Big(g_{k}(\Phi)\frac{d\mu_{g,k,\Phi}}{d\mu_{g}}\frac{\omega_{g}^{r-1}}{(r-1)!}\Big),\end{equation} i.e. for any $x \in X$ and $s, t \in \textrm{Sym}^d E_{x}$, we have 
 $$\langle s, t \rangle_{h(k, \Phi)}=C_{r,d}^{-1}\int_{\mathbb{P}E_{x}^*} \langle \widehat{s},\widehat{t}\, \rangle_{g_{k}(\Phi)}\frac{d\mu_{g,k,\Phi}}{d\mu_{g}} \frac{\omega_{g}^{r-1}}{(r-1)!}.$$

\end{Def}

If $h$, $\omega$ and $\Phi$ vary in a bounded family, then the metrics $h(k,\Phi)$ vary in a bounded family. More precisely, we have the following.

\begin{thm}\label{2011thm 1a}
Let $\nu_{0}$ be a fixed K\"ahler form on $X$  and $h_{0}$ be a fixed hermitian metric on $E$. For any positive numbers $l$ and $l'$ and any
positive integer $p$, there exists a positive number $C_{l,l',p}$
such that if $$||\omega||_{C^p(\nu_{0})}, ||h||_{C^{p+2}(h_{0},\nu_{0})}, ||\Phi||_{C^{p+2}(h_{0},\nu_{0})}
\leq l \,\,\,\text{and}\,\,\,$$  $$ \inf_{x \in X} ||h(x)||_{h_{0}(x)},  \inf_{x \in X}|\omega(x)^m|_{\nu_{0}(x)}\geq l',$$
then $||h(k,\Phi)||_{C^p(h_{0}, \nu_{0})} \leq C_{l,l',p}$, for $k \gg 0$.
\end{thm}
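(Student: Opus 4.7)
The plan is to mimic the proof of Theorem~\ref{2011thm 1} (which handles the $\Phi = 0$ case), carrying the dependence on $\Phi$ through each step. The key observation is that $h(k,\Phi)$ is a fiber integral whose integrand admits a full asymptotic expansion in $k^{-1}$, with coefficients that are smooth endomorphisms of $\textrm{Sym}^d E$ depending polynomially on $h$, $\omega$, $\Phi$, and finitely many of their derivatives. Since the fiber $\mathbb{P}E_x^*$ is compact, differentiating under the integral sign converts such $C^p$ bounds on the integrand into $C^p$ bounds on $h(k,\Phi)$.

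First I would expand the fiber density $g_k(\Phi)/g$ in powers of $k^{-1}$. Since $h_{k^{-1}}(\Phi) = \textrm{Sym}^d h (I + k^{-1}\Phi)$, a Neumann series expansion gives
\begin{equation*}
\frac{g_k(\Phi)}{g}([v]) = 1 + k^{-1} F(\Phi)([v]) + \sum_{j\ge 2} k^{-j} G_j(\Phi, h)([v]),
\end{equation*}
where each $G_j$ is a smooth function on $\mathbb{P}E^*$, polynomial in $\Phi$ and with coefficients smooth in $h$. Similarly, from $\omega_{g_k(\Phi)} = \omega_g + k^{-1} i\bar{\partial}\partial F(\Phi) + O(k^{-2})$ together with the binomial expansion of $(\omega_{g_k(\Phi)} + k\omega)^{m+r-1}$, one obtains
\begin{equation*}
\frac{d\mu_{g,k,\Phi}}{d\mu_g} = \sum_{j=0}^m k^{j-m} \tilde{f}_j(\Phi, h, \omega) + O(k^{-m-1}),
\end{equation*}
where each $\tilde{f}_j$ is a smooth function on $\mathbb{P}E^*$ whose $C^p$ norm is controlled, via the same computations as in the proof of Theorem~\ref{2011thm 1}, by the $C^{p+2}$ norms of $h$, $\omega$, $\Phi$ and the lower bounds on $h$ and $\omega^m$. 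Multiplying the two expansions yields a full asymptotic expansion of $g_k(\Phi)\frac{d\mu_{g,k,\Phi}}{d\mu_g}$ in $k^{-1}$.

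Next I would push forward this expansion fiber by fiber as in Definition~\ref{2011def1}, obtaining
\begin{equation*}
h(k,\Phi) = \sum_{j=0}^{m} k^{j-m} \, H_j(h,\omega,\Phi) + R_k(h,\omega,\Phi),
\end{equation*}
where $H_0 = \textrm{Sym}^d h$ (up to normalization by $C_{r,d}$), $H_1$ involves $\Psi_{m-1}$ together with $T\Phi$ via \eqref{2011eq11}, and the higher $H_j$ are fiber integrals of the explicit coefficients above. Each $H_j$ is a smooth endomorphism of $\textrm{Sym}^d E$ whose $C^p$-norm is bounded in terms of the $C^{p+2}$-norms of $(h,\omega,\Phi)$ and the lower bounds on $h$ and $\omega^m$, because fiber integration over the compact fibers of $\mathbb{P}E^* \to X$ commutes with differentiation in the base and the volume of the fibers is uniformly controlled.

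The main technical obstacle will be showing that the remainder $R_k(h,\omega,\Phi)$ is $O(k^{-1})$ in $C^p$ uniformly over the bounded family, which requires a Taylor remainder estimate on the integrand with explicit control of the constants. Here one applies the standard integral form of the Taylor remainder in $k^{-1}$ to the smooth function $g_k(\Phi) \frac{d\mu_{g,k,\Phi}}{d\mu_g}$, writing the constants as polynomial expressions in partial derivatives of $h$, $\omega$, $\Phi$ up to a fixed order $s = s(p)$, and then integrating over the fiber. Combined with the invertibility of $H_0 = C_{r,d}^{-1} \textrm{Sym}^d h$ (which follows from the lower bound on $h$), this yields the uniform $C^p$-bound on $h(k,\Phi)$ for $k \gg 0$.
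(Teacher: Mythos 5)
Your proposal is correct in strategy, but it takes a genuinely different route from the paper. You rebuild the entire asymptotic expansion of the fiber density $g_k(\Phi)\,\frac{d\mu_{g,k,\Phi}}{d\mu_g}$ in powers of $k^{-1}$, bound each coefficient $H_j$ by pushing the $C^p$ estimates of Proposition \ref{2011prop 2} and Theorem \ref{2011thm 2} through with $\Phi$ carried along, and then control a full Taylor remainder. The paper instead argues by perturbation off the already-proven $\Phi=0$ case: Lemma \ref{2011lem5} gives $\big|\big|\tfrac{g_k(\Phi)}{g}-1\big|\big|_{C^{p+2}}=O(k^{-1})$ via the Neumann series $\tfrac{g}{g_k(\Phi)}=\sum_i(-1)^ik^{-i}\mathrm{tr}(\lambda_d\Phi^i)$, which forces $\omega_{g_k(\Phi)}$ and hence the volume ratio $\tfrac{d\mu_{g,k,\Phi}}{d\mu_{g,k}}$ to be within $O(k^{-1})$ of the $\Phi=0$ quantities in $C^p$; then Lemma \ref{2011lem6} (a $C^p$-stability estimate for the fiber-integration operator in its density) converts this into $||h(k,\Phi)-h(k)||_{C^p}=O(k^{-1})$, and Theorem \ref{2011thm 1} bounds $h(k)$. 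The paper's route is shorter because it only needs a first-order proximity estimate rather than uniform control of every coefficient and the remainder; your route is more self-contained and would additionally deliver uniform bounds on all expansion coefficients $H_j(h,\omega,\Phi)$, which is more than the theorem asks for. Two small points to tidy in your write-up: your indexing $\sum_{j=0}^m k^{j-m}H_j$ with leading coefficient $H_0=\textrm{Sym}^d h$ is internally inconsistent (the leading, $k^0$, term should carry the index $j=m$, matching $\Psi_m=I$ in the paper), and the invertibility of $\textrm{Sym}^d h$ is not what is needed for an upper bound on $||h(k,\Phi)||_{C^p}$ --- the lower bound $\inf_x||h(x)||_{h_0(x)}\geq l'$ enters rather because the inverse matrix entries $h^{ij}$ appear in the denominators of the homogeneous fiber densities.
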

We prove Theorem \ref{2011thm 1a} in Section $5$.

\begin{proof}[Proof of Theorem  \ref{thmH1a}]


Recall that $$h(k, \Phi)=C_{r,d}^{-1}\pi_{*} \Big(g_{k}(\Phi)\frac{d\mu_{g,k,\Phi}}{d\mu_{g}}\frac{\omega_{g}^{r-1}}{(r-1)!}\Big).$$
Therefore \eqref{eqH8a}, \eqref{eqH8}, \eqref{2011eq10} and \eqref{2011eq11} imply that $$h(k, \Phi)= \textrm{Sym}^d h(I+k^{-1}(T(\Phi)+\Psi_{m-1})+O(k^{-2})).$$
 
 Let $B_{k}(h(k, \Phi),\omega)$ be the
Bergman kernel of $\textrm{Sym}^d E\otimes L^k$ with respect to the $L^2$-metric
defined by the hermitian metric $h(k,\Phi)\otimes \sigma^k$ on
$\textrm{Sym}^d E\otimes L^k$ and the volume form $\frac{ \omega^m}{m!}$ on $X$.
Therefore, if $s_{1},..., s_{N}$ is an orthonormal basis for
$H^0(X,\textrm{Sym}^d E\otimes L^k)$ with respect to the metric $L^2(h(k)\otimes
\sigma^k,\frac{ \omega^m}{m!})$, then
\begin{equation} B_{k}(h(k, \Phi),\omega)=\sum s_{i}\otimes s_{i}^{*_{h(k, \Phi)\otimes
\sigma^k}},\end{equation} We define $\widetilde{B}_{k}(h,\omega, \Phi)$
as follow
\begin{equation}\widetilde{B}_{k}(h,\omega, \Phi)=\sum s_{i}\otimes s_{i}^{*_{\textrm{Sym}^d h(I+k^{-1}\Phi)\otimes
\sigma^k}}.\end{equation} Let
$\widehat{s_{1}},....,\widehat{s_{N}} $ be the corresponding basis
for $H^{0}(\mathbb{P}E^*,\mathcal{O}_{\mathbb{P}E^*}(d)\otimes
L^{k})$. Hence,
\begin{align*}\int_{\mathbb{P}E^*} \langle \widehat{s_{i}}, \widehat{s_{j}}
\rangle_{g_{k}(\Phi) \otimes \sigma^k}d\mu_{g,\Phi, k}&=\int_{\mathbb{P}E^*}
\langle \widehat{s_{i}}, \widehat{s_{j}} \rangle_{g_{k}(\Phi) \otimes
\sigma^k}\frac{d\mu_{g,\Phi, k}}{d\mu_{g}}d\mu_{g}\\&=C_{r,d} \int_{X}\langle
s_{i}, s_{j} \rangle_{h(k,\Phi) \otimes
\sigma^k}\frac{\omega^m}{m!}\\&=C_{r,d}\delta_{ij}.\end{align*} Therefore
$\frac{1}{\sqrt{C_{r,d}}}\widehat{s_{1}},....,\frac{1}{\sqrt{C_{r,d}}}\widehat{s_{N}}
$ is an orthonormal basis for
$H^{0}(\mathbb{P}E^*,\mathcal{O}_{\mathbb{P}E^*}(d)\otimes L^{k})$
with respect to $L^2(g_{k}(\Phi) \otimes \sigma^k, d\mu_{g,\Phi,k})$. Hence, 
$$C_{r,d} \rho_{k}(g,\Phi, \omega)=C_{r,d} \rho_{k}(g_{k}(\Phi), \omega)= \sum |\hat{s_{i}}|_{g_{k}(\Phi)}^2$$ and therefore Lemma \ref{lemH4} implies that $$ \rho_{k}(g,\Phi, \omega)([v])=C_{r,d}^{-1}tr \big( \lambda_{d}(v,\textrm{Sym}^d h(I+k^{-1}\Phi))\widetilde{B}_{k}(h,\omega, \Phi)\big).$$

In order to conclude the proof, it suffices to show that there exist smooth
endomorphisms $A_{i}(h, \omega, \Phi) \in \Gamma(X,End(\textrm{Sym}^d E))$ such that
$$ \widetilde{B}_{k}(h,\omega, \Phi)\sim k^m+A_{1}(h,\omega, \Phi)k^{m-1}+\dots              .$$

The same argument as in the proof of Theorem \ref{thmH1} implies that there exist smooth
endomorphisms $B_{i} \in \Gamma(X,End(\textrm{Sym}^d E))$ such that
$$ B_{k}(h(k,\Phi),\omega)\sim k^m+B_{1}k^{m-1}+\dots              ,$$
where the first coefficient $B_{i}$ is given by \begin{equation}B_{1}(\textrm{Sym}^d h)=
\frac{i}{2\pi} \Lambda F_{(\textrm{Sym}^d E,\textrm{Sym}^d h)}+ \frac{1}{2} S(\omega) I_{\textrm{Sym}^d E}.\end{equation} 
On the other hand
\begin{align*}B_{k}(h(k,\Phi),\omega)&=\sum s_{i}\otimes s_{i}^{*_{h(k, \Phi)\otimes \sigma^k}}\\&=\sum s_{i}\otimes s_{i}^{*_{\textrm{Sym}^d h(I+k^{-1}\Phi)\otimes \sigma^k}} (I+k^{-1}\Phi)^{-1}(I+k^{-1}(T\Phi+\Psi_{m-1})+\dots)\\&=\widetilde{B}_{k}(h,\omega, \Phi)
(I+k^{-1}\Phi)^{-1}(I+k^{-1}(T\Phi+\Psi_{m-1})+\dots).\end{align*}
Therefore, \begin{align*}\widetilde{B_{k}}(h, \omega,\Phi)&=B_{k}(h(k,\Phi),\omega)(I+k^{-1}(T\Phi+\Psi_{m-1})+O(k^{-2}))^{-1}(I+k^{-1}\Phi)\\&=B_{k}(h(k,\Phi),\omega)(I+k^{-1}(\Phi-T\Phi-\Psi_{m-1})+O(k^{-2}))\\&\sim k^m+\big(B_{1}-\Psi_{m-1}+\Phi-T\Phi \big)k^{m-1}+ \dots. \end{align*}
Hence \eqref{2011eq12} imply that

\begin{align*}A_{1}(h, \omega, \Phi)&= \frac{ir}{2\pi(r+d)}  \Big (  \Lambda F_{\textrm{Sym}^d h} - \frac{1}{\textrm{rank}\big(\textrm{Sym}^d E\big)} tr( \Lambda F_{\textrm{Sym}^d h} )I  \Big )\\& +\frac{1}{2} S(\omega)I+\Phi -T\Phi= A_{1}(h,\omega)+\Phi-T\Phi.\end{align*}


Notice that Theorem \ref{2011thm 1a} implies that if $h$, $\Phi$ and $\omega$ vary in a bounded family and $\omega, h$ are bounded from below, then the metrics $h(k,\Phi)$ vary
in a bounded family. Thus the asymptotic expansion that we obtained for $\widetilde{B}_{k}(h,\omega, \Phi)$ is uniform as long as $h$, $\Phi$ and $\omega$ vary in a bounded family and $\omega$ and $h$ are bounded from below.

\end{proof}

\begin{prop}\label{propH3}
Suppose that  $\omega_{\infty} \in 2\pi c_{1}(L)$ be a K\"ahler
form with constant scalar curvature and $h_{\textrm{HE}}$ be a
Hermitian-Einstein  metric on $E$, i.e.
$\Lambda_{\omega_{\infty}} F_{(E,h_{\textrm{HE}})}=\mu I_{E}$, where
$\mu$ is the $\omega_{\infty}-$slope of the bundle $E$. We have
\begin{align*}A_{1,1}(\varphi, \eta, \Phi)&:=\frac{d}{dt}\Big|_{t=0}A_{1}(h_{\textrm{HE}}(I_{E}+t\varphi),\omega_{\infty}+it\overline{\partial}\partial
\eta, I_{\textrm{Sym}^d E}+t\Phi)\\&=\mathcal{D}^*\mathcal{D}\eta I_{\textrm{Sym}^d E} +
\frac{ir}{2\pi (r+d)}\Big( S^{d}\Lambda_{\omega_{\infty}}\overline{\partial}\partial
\varphi+\Lambda^2_{\omega_{\infty}}(F_{\textrm{Sym}^d h_{\textrm{HE}}}\wedge
i\overline{\partial}\partial \eta
)\\&\qquad-\frac{1}{R}tr\big(S^d\Lambda_{\omega_{\infty}}\overline{\partial}\partial
\varphi+\Lambda^2_{\omega_{\infty}}(F_{h_{\textrm{HE}}}\wedge
i\overline{\partial}\partial \eta )\big)\Big)+ \Phi- T \Phi,\end{align*} where
$\mathcal{D}^*\mathcal{D}$ is Lichnerowicz operator (cf.
\cite[Page 515]{D}) and $R$ is the rank of the vector bundle $\textrm{Sym}^d E$.
\end{prop}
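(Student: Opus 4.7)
The plan is to differentiate the explicit formula for $A_{1}(h,\omega,\Phi)$ supplied by Theorem \ref{thmH1a}, namely
$$A_{1}(h,\omega,\Phi)=\frac{r}{r+d}\Big(\Lambda_{\omega} F_{\textrm{Sym}^{d}h}-\tfrac{1}{R}\mathrm{tr}(\Lambda_{\omega} F_{\textrm{Sym}^{d}h})I\Big)+\tfrac{1}{2}S(\omega)I+\Phi-T(\Phi),$$
term by term, at $(h_{\textrm{HE}},\omega_{\infty},I_{\textrm{Sym}^{d}E})$. The three variations $(\varphi,\eta,\Phi)$ act on essentially disjoint pieces, so I will handle them in turn.

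First I would treat the $\Phi$-variation. Since $I_{\textrm{Sym}^{d}E}=\tfrac{1}{d}S^{d}(I_{E})$ lies in the image of $S^{d}$, Lemma \ref{2011lem3} gives $T(I_{\textrm{Sym}^{d}E})=I_{\textrm{Sym}^{d}E}$; in particular $(\Phi-T\Phi)|_{\Phi=I}=0$, and differentiating $\Phi\mapsto \Phi-T(\Phi)$ in the direction $\Phi$ yields exactly $\Phi-T(\Phi)$ (the operator $T$ is linear). The $(h,\omega)$-dependence of $T$ enters only through its value at $I$, which is identically $I$, so it contributes nothing.

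Next I would linearize the curvature term in the pair $(\varphi,\eta)$. Using $\textrm{Sym}^{d}(h(I_{E}+t\varphi))=\textrm{Sym}^{d}h\cdot(I+tS^{d}\varphi+O(t^{2}))$ and the standard formula for the variation of the Chern curvature under the conformal change $H\mapsto H(I+tA)$, together with $F_{\textrm{Sym}^{d}h}=S^{d}F_{h}$, one obtains
$$\tfrac{d}{dt}\Big|_{t=0}F_{\textrm{Sym}^{d}(h_{\textrm{HE}}(I+t\varphi))}=\bar{\partial}\partial\bigl(S^{d}\varphi\bigr)=S^{d}(\bar{\partial}\partial\varphi).$$
For the contraction, the standard formula for the derivative of $\Lambda_{\omega}$ under $\omega\mapsto \omega+it\bar{\partial}\partial\eta$ contributes the term $\Lambda_{\omega_{\infty}}^{2}(F_{\textrm{Sym}^{d}h_{\textrm{HE}}}\wedge i\bar{\partial}\partial\eta)$. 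Collecting these pieces with the prefactor $\frac{ir}{2\pi(r+d)}$ and applying the same procedure to the trace-normalization term yields the traceless combination in the proposition.

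Finally I would linearize the scalar curvature piece $\tfrac{1}{2}S(\omega)I_{\textrm{Sym}^{d}E}$. At a K\"ahler metric of \emph{constant} scalar curvature, the well-known formula (see e.g. \cite[Prop.\ 9]{D}) reduces the linearization of $S$ in the direction $i\bar{\partial}\partial\eta$ to (a constant multiple of) $\mathcal{D}^{*}\mathcal{D}\eta$, the first-order term $\langle\nabla S,\nabla\eta\rangle$ dropping out thanks to the CSCK hypothesis on $\omega_{\infty}$. After fixing the normalization convention this produces the $\mathcal{D}^{*}\mathcal{D}\eta\,I_{\textrm{Sym}^{d}E}$ summand. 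Summing the three contributions and noting that the Hermitian--Einstein condition $\Lambda_{\omega_{\infty}}F_{h_{\textrm{HE}}}=\mu I_{E}$ implies $\Lambda_{\omega_{\infty}}F_{\textrm{Sym}^{d}h_{\textrm{HE}}}=d\mu I_{\textrm{Sym}^{d}E}$ (so that the terms coming from the $\mathrm{tr}$ correction at the base point align with those in the proposition) gives the stated formula.

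The routine verifications of signs and numerical constants (especially in the $\Lambda^{2}$ formula and the Lichnerowicz normalization) are the most error-prone part, but there is no analytic obstacle: all the required linearization formulas are standard and the identity $T(I)=I$ is what makes the $\Phi-T\Phi$ summand decouple cleanly from the $(\varphi,\eta)$ variation.
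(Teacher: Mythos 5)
Your proposal is correct and follows essentially the same route as the paper: differentiate the explicit formula for $A_{1}$ term by term, compute the variation of $\Lambda_{\omega}F_{\textrm{Sym}^{d}h}$ in the $(\varphi,\eta)$ directions, quote the Lichnerowicz linearization of the scalar curvature at a CSCK metric, and use linearity of $T$ together with $T(I_{\textrm{Sym}^{d}E})=I_{\textrm{Sym}^{d}E}$ (Lemma \ref{2011lem3}) for the $\Phi$-direction. The only detail you elide is the extra summand $-(\Lambda_{\omega_{\infty}}F_{\textrm{Sym}^{d}h_{\textrm{HE}}})\,\Lambda_{\omega_{\infty}}(i\overline{\partial}\partial\eta)$ arising in the variation of the contraction, which the paper computes explicitly via the wedge identity $mF\wedge\omega^{m-1}=f(t)\,\omega^{m}$ and which is a multiple of the identity by the Hermitian--Einstein condition, hence drops out of the traceless combination --- precisely the alignment you point to at the end.
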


\begin{proof}
Define
$f(t)=\Lambda_{\omega_{\infty}+it\overline{\partial}\partial
\eta}F_{(\textrm{Sym}^d h_{\textrm{HE}}(I+tS^d\varphi))}$. Therefore, we have
$$mF_{(\textrm{Sym}^d h_{\textrm{HE}}(I+tS^d\varphi))} \wedge
(\omega_{\infty}+it\overline{\partial}\partial \eta)^{m-1}= f(t)
(\omega_{\infty}+it\overline{\partial}\partial \eta)^{m}.$$
Differentiating with respect to $t$ at $t=0$, we obtain
$$m S^d\overline{\partial}\partial
\varphi \wedge \omega_{\infty}^{m-1}+m(m-1)F_{\textrm{Sym}^d h_{\textrm{HE}}}\wedge
(i\overline{\partial}\partial \eta)\wedge
\omega_{\infty}^{m-2}=f'(0)\omega_{\infty}^m+mf(0)(i\overline{\partial}\partial
\eta)\wedge \omega_{\infty}^{m-1}.$$ Since $f(0)=\mu I_{E}$, we
get
$$f'(0)=S^d\Lambda_{\omega_{\infty}}\overline{\partial}\partial \varphi
+\Lambda^2_{\omega_{\infty}}(F_{\textrm{Sym}^d h_{\textrm{HE}}}\wedge
(i\overline{\partial}\partial \eta))-\mu \Lambda_{\omega_{\infty}}(i\overline{\partial}\partial \eta)I_{E}.$$
On the other hand (cf. \cite[pp. 515, 516]{D}.)
$$\frac{d}{dt}\Big|_{t=0}S(\omega_{\infty}+it\overline{\partial}\partial
\eta)=\mathcal{D}^*\mathcal{D}\eta.$$
\end{proof}

\begin{cor}\label{corH2}
Suppose that $Aut(X,L)/\mathbb{C}^*$ is discrete and $E$ is
stable. Then the map $$A_{1,1}: \Gamma(End( E))\oplus C^{\infty}(X)\oplus     \Gamma(End(\textrm{Sym}^d E))   \rightarrow \Gamma_{0}(End(\textrm{Sym}^d E)) $$ is surjective, where $\Gamma_{0}(End(\textrm{Sym}^d E))$
is the space of smooth hermitian (with respect to $\textrm{Sym}^d h_{HE}$) endomorphisms $\Psi  $  of $\textrm{Sym}^d E$ satisfying
$\int_{X}tr(\Psi)\omega_{\infty}^m=0$.
\end{cor}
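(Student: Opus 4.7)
The plan is to split $\Gamma_{0}(\textrm{End}(\textrm{Sym}^{d}E))$ into the pointwise image of $S^{d}$ and its orthogonal complement, and to hit these two pieces using the $(\varphi,\eta)$ and $\Phi$ components of $A_{1,1}$ respectively. At each $x\in X$ let $V_{x}:=S^{d}(\textrm{End}(E_{x}))\subset \textrm{End}(\textrm{Sym}^{d}E_{x})$. Since $S^{d}$ is injective (apply $S^{d}\varphi$ to $v \dots v$), $V$ is a smooth subbundle, so each $\Psi\in\Gamma_{0}(\textrm{End}(\textrm{Sym}^{d}E))$ decomposes smoothly and uniquely as $\Psi=\Psi_{1}+\Psi_{2}$ with $\Psi_{1}\in\Gamma(V)$ and $\Psi_{2}\in\Gamma(V^{\perp})$, both hermitian, the splitting being fibrewise Hilbert--Schmidt orthogonal with respect to $\textrm{Sym}^{d}h_{\textrm{HE}}$.

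The first step is to solve $\Phi-T\Phi=\Psi_{2}$. The crux is that $T$ is pointwise self-adjoint with respect to the Hilbert--Schmidt pairing: a direct calculation from the definition of $T$, together with the identity $\sum_{i}\langle\widehat{\Psi E_{i}},\widehat{E_{i}}\rangle_{g}=F(\Psi)$ that underlies Lemma \ref{lemH4}, yields
\[
\textrm{tr}\bigl(\Psi\,T\Phi\bigr)=C_{r,d}^{-1}\int_{\mathbb{P}E_{x}^{*}}\bigl(F(\Phi)+\widetilde{\triangle}F(\Phi)\bigr)F(\Psi)\,\frac{\omega_{g}^{r-1}}{(r-1)!},
\]
and fibrewise integration by parts for the second-order operator $\widetilde{\triangle}$ makes the right-hand side symmetric in $(\Phi,\Psi)$. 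Lemma \ref{2011lem3} identifies $\ker(I-T)=V$, so by self-adjointness $\textrm{image}(I-T)=V^{\perp}$ pointwise and $(I-T)|_{V^{\perp}}$ is a fibrewise isomorphism. Inverting it produces a unique smooth hermitian $\Phi\in\Gamma(V^{\perp})$ with $\Phi-T\Phi=\Psi_{2}$.

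The second step is to solve $A_{1,1}(\varphi,\eta,0)=\Psi_{1}$ with the chosen $\Phi$. Using $F_{\textrm{Sym}^{d}h_{\textrm{HE}}}=S^{d}F_{h_{\textrm{HE}}}$, $I_{\textrm{Sym}^{d}E}=\tfrac{1}{d}S^{d}I_{E}$, and $\textrm{tr}\circ S^{d}=\tfrac{dR}{r}\textrm{tr}$, I write $\Psi_{1}=S^{d}\psi$ for the unique smooth hermitian $\psi\in\Gamma(\textrm{End}(E))$ and apply $(S^{d})^{-1}$ to reduce the equation to
\[
\tfrac{1}{d}\mathcal{D}^{*}\mathcal{D}\eta\cdot I_{E}+\tfrac{ir}{2\pi(r+d)}\bigl(\Lambda\bar\partial\partial\varphi+\Lambda^{2}(F_{h_{\textrm{HE}}}\wedge i\bar\partial\partial\eta)-\tfrac{1}{r}\textrm{tr}(\cdots)\,I_{E}\bigr)=\psi.
\]
Lemma \ref{2011lem4} yields $\textrm{tr}(\Psi_{2})=0$, whence $\int_{X}\textrm{tr}(\psi)\,\omega_{\infty}^{m}=0$. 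Up to absorbing the positive constants $\tfrac{1}{d}$ and $\tfrac{r}{r+d}$ into rescalings of $\eta$ and $\varphi$, this is exactly the linearization whose surjectivity was proved in the $d=1$ case in \cite{S}; the present hypotheses---discreteness of $\mathrm{Aut}(X,L)/\mathbb{C}^{*}$ giving invertibility of $\mathcal{D}^{*}\mathcal{D}$ on zero-average functions, and Mumford stability of $E$---are exactly what is needed there.

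I expect the principal obstacle to be the self-adjointness statement for $T$ in Step 1 and its consequence $\textrm{image}(I-T)=V^{\perp}$; the verification requires care with the fibrewise integration by parts for $\widetilde{\triangle}$ against the volume $\omega_{g}^{r-1}/(r-1)!$, and with passing from a pointwise inverse on $V^{\perp}$ to a smooth global section.
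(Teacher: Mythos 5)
Your proposal is correct and follows the same overall strategy as the paper: split $\Psi$ into a piece lying in $S^d(\textrm{End}(E))$ and a complementary piece, kill the complementary piece with $\Phi\mapsto\Phi-T\Phi$, and kill the $S^d$ piece with $(\varphi,\eta)$ via surjectivity of $\mathcal{D}^*\mathcal{D}$ on zero-average functions and of $\varphi\mapsto\Lambda\bar\partial\partial\varphi$ on $\Gamma_0(\textrm{End}(E))$ (simplicity of $E$). The genuine difference is in how the splitting is justified. The paper decomposes $\textrm{End}(\textrm{Sym}^dE)=\ker(\widetilde{T})\oplus \textrm{Im}(\widetilde{T})$ with $\widetilde{T}=I-T$ and simply asserts this direct sum after identifying $\ker(\widetilde{T})=S^d(\textrm{End}(E))$ via Lemma \ref{2011lem3}; knowing the kernel alone does not give $\ker\oplus\textrm{Im}$ for a general bundle endomorphism, so something more is implicitly being used. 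Your observation that $T$ is fibrewise self-adjoint for the Hilbert--Schmidt pairing --- via $\textrm{tr}(\Psi\,T\Phi)=C_{r,d}^{-1}\int(F(\Phi)+\widetilde{\triangle}F(\Phi))F(\Psi)\,\omega_g^{r-1}/(r-1)!$ and fibrewise integration by parts (which is legitimate: wedging with $\omega^m$ isolates the purely vertical part of $i\bar\partial\partial F$, so $\widetilde{\triangle}$ restricts to a genuine second-order operator on each closed fibre) --- supplies exactly the missing justification and additionally identifies $\textrm{Im}(I-T)$ as the orthogonal complement of $S^d(\textrm{End}(E))$, making $(I-T)$ invertible there. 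The second step also differs cosmetically: you descend to $\textrm{End}(E)$ by $(S^d)^{-1}$ and invoke the $d=1$ case of \cite{S}, whereas the paper reruns the argument directly on $\textrm{Sym}^dE$ (first choosing $\eta_0$ to match the trace using $\textrm{tr}(\Psi_2)=0$ from Lemma \ref{2011lem4}, then $\varphi_0$ for the trace-free remainder). One small caveat: the phrase ``absorbing the constants into rescalings of $\eta$ and $\varphi$'' is imprecise, since $\eta$ enters both through $\mathcal{D}^*\mathcal{D}\eta$ and through $\Lambda^2(F\wedge i\bar\partial\partial\eta)$ with $d$-dependent relative weights; but this is harmless because the system is triangular --- $\eta$ is chosen to solve the trace equation and $\varphi$ then absorbs whatever trace-free term results --- so surjectivity is unaffected.
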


\begin{proof}

In this proof we let $F=F_{(E,h_{\textrm{HE}})}$ and $\Lambda=\Lambda_{\omega_{\infty}}$.  Define the bundle map $\widetilde{T}: \textrm{End} (\textrm{Sym}^d E) \rightarrow \textrm{End} (\textrm{Sym}^d E)$ by $\widetilde{T}\Phi=\Phi-T\Phi$. Lemma \ref{2011lem3} implies that $\ker (\widetilde{T})= S^d(End(E))$. Therefore $\ker (\widetilde{T})$ and $Im (\widetilde{T})$ are smooth subbundles of $End(\textrm{Sym}^d E)$ and as smooth bundles, we have  $$End(\textrm{Sym}^d E)=\ker (\widetilde{T}) \oplus Im (\widetilde{T}). $$

Let $\Psi \in \Gamma_{0}(End(\textrm{Sym}^d E)).$ There exist $\Psi_{1} \in \ker (\widetilde{T})$ and $\Psi_{2} \in Im (\widetilde{T})$ such that $\Psi=\Psi_{1}+\Psi_{2}$. Hence there exists $\Phi_{0} \in \Gamma(End(\textrm{Sym}^d E))$ such that \begin{equation}\label{2011eq1}\Psi_{2}=\widetilde{T} (\Phi_{0})=\Phi_{0}- T\Phi_{0} .\end{equation} 

We know that the map $\eta \in C^{\infty}_{0}\rightarrow \mathcal{D}^*\mathcal{D}\eta \in C^{\infty}_{0}$ is surjective since $Aut(X,L)/\mathbb{C}^*$ is discrete (cf. \cite[pp. 515, 516]{D}). Thus, we can find $\eta_{0} \in C^{\infty}(X) $ such that $\mathcal{D}^*\mathcal{D}\eta_{0}=\frac{1}{R}tr(\Psi).$ Note that Lemma \ref{2011lem4} and \eqref{2011eq1} imply that
$$\mathcal{D}^*\mathcal{D}\eta_{0}=\frac{1}{R}tr(\Psi)=\frac{1}{R}tr(\Psi_{1}).$$ On the other hand

 $$\frac{i}{2\pi}\big(\Lambda^2(F \wedge i\overline{\partial}\partial \eta_{0} )-\frac{1}{R}tr(\Lambda^2(F\wedge i\overline{\partial}\partial \eta_{0})\big)-\Psi_{1}+\frac{1}{R}tr(\Psi_{1}) \in \Gamma_{0}(S^dEnd(E)).$$ 
The map $$\varphi \in \Gamma_{0}(End(E))\rightarrow \frac{i}{2\pi}\Lambda\overline{\partial}\partial \varphi \in \Gamma_{0}(End(E))$$ is surjective since $E$ is simple (cf. \cite{K}). Therefore,  there exists $\varphi_{0} \in \Gamma(End(E))$ such that  
$$-\frac{i}{2\pi} S^d\Lambda\overline{\partial}\partial \varphi_{0}=\frac{i}{2\pi}\big(\Lambda^2(F \wedge i\overline{\partial}\partial \eta_{0} )-\frac{1}{R}tr(\Lambda^2(F\wedge i\overline{\partial}\partial \eta_{0})\big)-\Psi_{1}+\frac{1}{R}tr(\Psi_{1}).$$
This together with \eqref {2011eq1} imply that  $$A_{1,1}(\varphi_{0},\eta_{0}, \Phi_{0})=\Psi.$$ Note that $tr \big( \frac{i}{2\pi} S^d\Lambda\overline{\partial}\partial \varphi_{0}\big)=0$.

 \end{proof}

\section{Constructing Almost Balanced Metrics}
Let $\sigma_{\infty}$ be a hermitian metric on $L$ such that
$\omega_{\infty}=i\overline{\partial}\partial \log \sigma_{\infty} $ is a K\"ahler form with
constant scalar curvature. Let  $h_{\textrm{HE}}$ be the
corresponding Hermitian-Einstein metric on $E$, i.e.
$$\Lambda_{\omega_{\infty}} F_{(E,h_{\textrm{HE}})}=\mu I_{E},$$ where
$\mu$ is the slope of the bundle $E$. Define
\begin{equation}\label{2011eq4}\omega_{0}=i\overline{\partial}\partial \log\widehat{\textrm{Sym}^d h_{\textrm{HE}}}=di\overline{\partial}\partial \log\widehat{h_{\textrm{HE}}}.\end{equation} After tensoring by
high power of $L$, we can assume without loss of generality that
$\omega_{0}$ is a K\"ahler form on $\mathbb{P}E^*$. We fix an
integer $a \geq 4.$ In order to prove the following, we use ideas
introduced by Donaldson in (\cite[Theorem 26]{D})

\begin{thm}\label{thmH3}

Suppose $Aut(X,L)$ is discrete. There exist smooth functions
$\eta_{1},\eta_{2},...$ on $X$, smooth endomorphisms $\varphi_{1}, \varphi_{2}, \dots$  of $E$ and
$\Phi_{1},\Phi_{2},...$ smooth endomorphisms  of $\textrm{Sym}^d E$ 
such that for any positive integer
$q$ if $$\nu_{k,q}=\omega_{\infty}+i\overline{\partial}\partial
(\sum_{j=1}^q k^{-j}\eta_{j}),$$ $$h_{k,q}=h_{\textrm{HE}}(I_{E}+\sum_{j=1}^q k^{-j}\varphi_{j})$$ and
$$\Phi_{k,q}=I_{\textrm{Sym}^d E}+\sum_{j=1}^q k^{-j} \Phi_{j},$$

 then

\begin{equation}\label{eqH14}\widetilde{B}_{k}(h_{k,q},\nu_{k,q}, \Phi_{k,q})=
\frac{k^mC_{r, d}N_{k}}{V_{k}}(I_{\textrm{Sym}^d E}+\delta_{q}),\end{equation}
where $||\delta_{q}||_{C^{a+2}}=O(k^{-q-1})$. Here $V_{k}=Vol(\mathbb{P}E^*,\mathcal{O}_{\mathbb{P}E^*}(d)\otimes
L^{k})$ and $N_{k}=h^{0}(\mathbb{P}E^*,\mathcal{O}_{\mathbb{P}E^*}(d)\otimes
L^{k})$ are topological invariants.

\end{thm}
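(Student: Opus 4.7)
The plan is to proceed by induction on $q$, following the strategy introduced by Donaldson in \cite[Theorem 26]{D}. The starting observation is that at the base data $(h_{\textrm{HE}}, \omega_\infty, I_{\textrm{Sym}^d E})$ the leading coefficient $A_{1}(h_{\textrm{HE}}, \omega_{\infty})$ of Theorem \ref{thmH1} is already a constant scalar multiple of $I_{\textrm{Sym}^d E}$: the Hermitian--Einstein condition $\Lambda F_{h_{\textrm{HE}}}=\mu I_{E}$ gives $\Lambda F_{\textrm{Sym}^d h_{\textrm{HE}}}=d\mu I_{\textrm{Sym}^d E}$, annihilating the trace-free piece in the formula of Theorem \ref{thmH1}(2); and the constant scalar curvature condition makes the $\tfrac{1}{2}S(\omega)I_{\textrm{Sym}^d E}$ term constant as well. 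So the expansion of $\widetilde{B}_{k}$ at the base point has constant scalar next-to-leading coefficient, which matches the structure of the desired right-hand side in \eqref{eqH14}.

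Next I would expand $\widetilde{B}_{k}(h_{k,q},\nu_{k,q},\Phi_{k,q})$ as a double power series in $k^{-1}$. Applying Theorem \ref{thmH1a} gives $\widetilde{B}_{k} \sim \sum_{j\ge 0} A_{j}(h_{k,q},\nu_{k,q},\Phi_{k,q})\, k^{m-j}$, and Taylor expanding each $A_{j}$ at the base point $(h_{\textrm{HE}},\omega_{\infty},I_{\textrm{Sym}^d E})$ collects terms of the form $A_{1,1}(\varphi_{l},\eta_{l},\Phi_{l}) + P_{l}(\varphi_{1},\dots,\varphi_{l-1};\eta_{1},\dots,\eta_{l-1};\Phi_{1},\dots,\Phi_{l-1})$ as the coefficient of $k^{m-l-1}$, where $A_{1,1}$ is the linearization computed in Proposition \ref{propH3} and $P_{l}$ is a differential-polynomial expression in the previously chosen corrections. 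On the target side, Hirzebruch--Riemann--Roch applied to $\mathcal{O}_{\mathbb{P}E^{*}}(d)\otimes\pi^{*}L^{k}$ together with the Hilbert--Samuel expansion of $V_{k}$ gives an asymptotic $\tfrac{C_{r,d}N_{k}}{V_{k}} \sim 1+\sum_{l\ge 1} c_{l} k^{-l}$. The inductive step is then: having solved through order $l-1$, find $(\varphi_{l},\eta_{l},\Phi_{l})$ satisfying
\begin{equation*}
A_{1,1}(\varphi_{l},\eta_{l},\Phi_{l}) \;=\; c_{l+1}\, I_{\textrm{Sym}^d E} \;-\; P_{l+1}(\varphi_{1},\dots,\varphi_{l-1};\eta_{1},\dots,\eta_{l-1};\Phi_{1},\dots,\Phi_{l-1}),
\end{equation*}
which is solvable by Corollary \ref{corH2} provided the right-hand side lies in $\Gamma_{0}(\textrm{End}(\textrm{Sym}^d E))$. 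This trace-matching hypothesis is the step I expect to be most delicate: it must be verified that integrating the trace of the right-hand side against $\omega_{\infty}^{m}$ gives zero. The cleanest way is to note that integrating $\textrm{tr}\,\widetilde{B}_{k}(h_{k,q-1},\nu_{k,q-1},\Phi_{k,q-1})$ against the appropriate volume form computes the topological invariant $N_{k}$, and the same Riemann--Roch expansion that defines the constants $c_{l}$ then forces the necessary trace cancellation order by order.

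Finally, to promote the formal power-series identity to the $C^{a+2}$ estimate $\|\delta_{q}\|_{C^{a+2}}=O(k^{-q-1})$ claimed in \eqref{eqH14}, I would invoke part (3) of Theorem \ref{thmH1a}. Because the corrections $\eta_{j},\varphi_{j},\Phi_{j}$ are fixed smooth quantities independent of $k$, the perturbed data $(h_{k,q},\nu_{k,q},\Phi_{k,q})$ stays in a $C^{s}$-bounded family uniformly in $k\gg 0$ (with $\omega$ and $h$ bounded below), so the constants $K_{a,p,\omega,h,\Phi}$ in Theorem \ref{thmH1a}(3) are uniformly controlled. Truncating the asymptotic expansion at $p=q+1$ and taking $a$ as given, this yields exactly the $C^{a+2}$ error bound required. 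The main obstacle throughout is the bookkeeping of the trace-matching condition; once that is verified, surjectivity from Corollary \ref{corH2} and uniformity from Theorem \ref{2011thm 1a} handle the rest.
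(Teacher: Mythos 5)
Your proposal is correct and follows essentially the same route as the paper: Donaldson's inductive scheme, Taylor expansion of the coefficients $A_{p}$ around $(h_{\textrm{HE}},\omega_{\infty},I_{\textrm{Sym}^d E})$, the observation that $(\varphi_{p},\eta_{p},\Phi_{p})$ enter the $k^{m-p}$ coefficient only through $A_{1,1}$, surjectivity from Corollary \ref{corH2}, and uniformity of the expansion to upgrade the formal solution to the $C^{a+2}$ bound. The one place you make the argument harder than necessary is the ``trace-matching'' step you flag as delicate: the paper does not predetermine the constants from a Riemann--Roch expansion of $C_{r,d}N_{k}/V_{k}$, but instead treats $c_{p}$ as an additional unknown in the equation $A_{1,1}(\varphi_{p},\eta_{p},\Phi_{p})-c_{p}I_{\textrm{Sym}^d E}=P_{p-1}$, so that $c_{p}$ can simply be chosen to put the right-hand side into $\Gamma_{0}(End(\textrm{Sym}^d E))$, and the identification of the resulting constant with the expansion of $k^{m}C_{r,d}N_{k}/V_{k}$ then comes for free from integrating the trace at the end rather than being an input to the induction.
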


\begin{proof}
The error term in the asymptotic expansion is uniformly bounded in
$C^{a+2}$ for all $h$, $\Phi$ and $\omega$ varying in a bounded family. Therefore
there exists a positive integer $s$ depends only on $p$ and $q$
such that
\begin{align}\label{eqH15}A_{p}(h(1+\varphi),\omega+i\overline{\partial}\partial
\eta, I+\Phi)&=A_{p}(h,\omega, I)+\sum_{j=1}^{q} A_{p,j}(\varphi,\eta, \Phi)\\&
\qquad+O(||(\varphi,\eta, \Phi)||_{C^s}^{q+1}),\notag\end{align} where $
A_{p,j}$ are homogeneous polynomials of degree $j$ , depending on
$h$ and $\omega$, in $\varphi$ , $\eta$  and $\Phi$ and its covariant
derivatives. Let $\varphi_{1}, \dots, \varphi_{q}$  be smooth endomorphisms of $E$, $\Phi_{1},\dots,\Phi_{q} $ be smooth endomorphisms
of $\textrm{Sym}^d E$ and $\eta_{1},\dots,\eta_{q} $ be smooth functions on $X$. We
have \begin{align}\label{eqH16}&A_{p}(h(1+\sum_{j=1}^q
k^{-j}\varphi_{j}),\omega+i\overline{\partial}\partial (\sum_{j=1}^q
k^{-j}\eta_{j}), I+\sum_{j=1}^q
k^{-j}\Phi_{j})\\&\qquad
\qquad=A_{p}(h,\omega, I)+\sum_{j=1}^{q}b_{p,j}k^{-j}+O(k^{-q-1}),\notag\end{align}
where $b_{p,j}$'s are multi linear expression on $\varphi_{i}$, $\Phi_{i}$'s and
$\eta_{i}$'s. Hence
\begin{align}\label{eqH17}&\widetilde{B}_{k}(h(1+\sum_{j=1}^q
k^{-j}\Phi_{j}),\omega+i\overline{\partial}\partial (\sum_{j=1}^q
k^{-j}\eta_{j}), I+\sum_{j=1}^q
k^{-j}\Phi_{j})\\&=k^m+A_{1}(h,\omega, I)k^{m-1}+....\notag\\&\qquad+(A_{q}(h,\omega, I)+b_{q-1,1}+...
+b_{1,q-1})k^{m-q}+O(k^{m-q-1}).\notag\end{align} We need to
choose $\varphi_{j}$, $\Phi_{j}$ and $\eta_{j}$ such that coefficients of
$k^m,...k^{m-q}$ in the right hand side of \eqref{eqH17} are
constant. Donaldson's key observation is that $\eta_{p}$, $\varphi_{p}$ and
$\Phi_{p}$ only appear in the coefficient of $k^{m-p}$ in the form
of $A_{1,1}(\varphi_{p}, \eta_{p}, \Phi_{p})$. Hence, we can do this
inductively. Assume that we choose
$\eta_{1},\eta_{2}, \dots ,  \eta_{p-1}$ , $\varphi_{1},\varphi_{2}, \dots ,\varphi_{p-1}$ and
$\Phi_{1},\Phi_{2}, \dots ,\Phi_{p-1}$ so that  the coefficients of
$k^m,...k^{m-p+1}$ are constant. Now we need to choose $\eta_{p}$, $\varphi_{p}$
and $\Phi_{p}$ such that the coefficient of $k^{m-p}$ is constant.
This means that  we need to solve the equation
\begin{equation}\label{eqH12}A_{1,1}(\varphi_{p},\eta_{p}, \Phi_{p})-c_{p}I_{\textrm{Sym}^d E}= P_{p-1},\end{equation}
for $\varphi_{p}$, $\Phi_{p},\eta_{p}$ and the constant $c_{p}$. In this equation
$P_{p-1}$ is determined by $\varphi_{1}, \dots,\varphi_{p-1}$ $\Phi_{1},\dots,\Phi_{p-1}$ and
$\eta_{1},\dots,\eta_{p-1}$. Corollary \ref{corH2} implies that we
can always solve the equation \eqref{eqH12}.

\end{proof}

\begin{cor}\label{corH3}

For any positive integer $q$, there exist hermitian metrics
$g_{k,q}$ on $\mathcal{O}_{\mathbb{P}E^*}(d)$ and K\"ahler forms
$\nu_{k,q}$ on $X$ in the class of $2\pi c_{1}(L)$ so that
$$\rho_{k}(g_{k,q},\nu_{k,q})=\frac{k^mN_{k}}{V_{k}}(1+\epsilon_{k,q}),$$
where $||\epsilon_{k,q}||_{C^{a+2}}=O(k^{-q-1}).$ Moreover,
\begin{equation}\label{neq6}||\omega_{g_{k,q}}+k\nu_{k,q}-(\omega_{0}+k\omega_{\infty})||_{C^{a}(\omega_{0}
+k\omega_{\infty})}=O(k^{-1}),\end{equation} where $\omega_{g_{k,q}}=i\overline{\partial}\partial \log g_{k,q}$, $\omega_{0}$ is defined by \eqref{2011eq4} and $\omega_{\infty}$ is the constant scalar curvature K\"ahler metric in the class of $2\pi c_{1}(L)$.

\end{cor}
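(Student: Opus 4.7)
The plan is to combine Theorem~\ref{thmH3} with the explicit formula for $\rho_k$ in Theorem~\ref{thmH1a}, and then compare the resulting metrics to the reference pair $(\omega_0, \omega_\infty)$ using the anisotropic scaling of $\omega_0 + k\omega_\infty$.

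First I would apply Theorem~\ref{thmH3} to obtain the smooth data $\varphi_j, \eta_j, \Phi_j$ and form $h_{k,q}, \Phi_{k,q}, \nu_{k,q}$ as in its statement, so that
$$\widetilde{B}_k(h_{k,q}, \nu_{k,q}, \Phi_{k,q}) = \frac{k^m C_{r,d} N_k}{V_k}\bigl(I_{\textrm{Sym}^d E} + \delta_q\bigr),$$
with $\|\delta_q\|_{C^{a+2}} = O(k^{-q-1})$. I would then define
$$g_{k,q} := \widehat{\textrm{Sym}^d h_{k,q}\bigl(I + k^{-1}\Phi_{k,q}\bigr)},$$
so that, by the definition immediately preceding Theorem~\ref{thmH1a}, $\rho_k(g_{k,q}, \nu_{k,q}) = \rho_k(\widehat{\textrm{Sym}^d h_{k,q}}, \nu_{k,q}, \Phi_{k,q})$. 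Substituting the display above into the identity from Theorem~\ref{thmH1a} yields
$$\rho_k(g_{k,q}, \nu_{k,q})([v]) = \frac{k^m N_k}{V_k}\, tr\bigl(\lambda_d(v, \textrm{Sym}^d h_{k,q}(I+k^{-1}\Phi_{k,q}))(I + \delta_q)\bigr).$$
Since $\lambda_d(v, H) = |v_d|_H^{-2}\, v_d \otimes v_d^{*_{H}}$ is a rank-one projection on $\textrm{Sym}^d E_x$, its trace equals $1$. Hence $\rho_k(g_{k,q}, \nu_{k,q}) = \frac{k^m N_k}{V_k}(1 + \epsilon_{k,q})$ with $\epsilon_{k,q}([v]) := tr(\lambda_d(v, \textrm{Sym}^d h_{k,q}(I+k^{-1}\Phi_{k,q}))\,\delta_q)$, and $\|\lambda_d(\cdot,\cdot)\|_{C^{a+2}}$ bounded together with $\|\delta_q\|_{C^{a+2}} = O(k^{-q-1})$ immediately gives $\|\epsilon_{k,q}\|_{C^{a+2}} = O(k^{-q-1})$.

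For \eqref{neq6}, I would use the expansions $h_{k,q} = h_{\textrm{HE}} + O(k^{-1})$ and $I + k^{-1}\Phi_{k,q} = I + O(k^{-1})$ to conclude that $\log g_{k,q} - \log \widehat{\textrm{Sym}^d h_{\textrm{HE}}} = O(k^{-1})$ in $C^{a+2}$, hence $\omega_{g_{k,q}} - \omega_0 = O(k^{-1})$ in $C^a$; likewise $\nu_{k,q} - \omega_\infty = i\overline{\partial}\partial(\sum_{j\ge 1} k^{-j}\eta_j) = O(k^{-1})$ in $C^a$. Consequently $\omega_{g_{k,q}} + k\nu_{k,q} - (\omega_0 + k\omega_\infty)$ is $O(k^{-1})$ in its fibre components and at worst $O(1)$ in its base components with respect to any fixed background metric.

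The main technical point — the one I expect will need the most care — is the final rescaling: in the norm $|\cdot|_{\omega_0 + k\omega_\infty}$ a $(1,1)$-form supported in base directions shrinks in pointwise size by a factor $k^{-1}$, and each base covariant derivative contributes an additional $k^{-1/2}$. This precisely absorbs the prefactor $k$ in $k(\nu_{k,q} - \omega_\infty)$, and gives the claimed $O(k^{-1})$ bound in $C^a(\omega_0 + k\omega_\infty)$.
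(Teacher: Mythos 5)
Your proposal is correct and follows essentially the same route as the paper: take $g_{k,q}=\widehat{\textrm{Sym}^d h_{k,q}(I+k^{-1}\Phi_{k,q})}$, combine Theorem~\ref{thmH3} with the trace identity of Theorem~\ref{thmH1a} and $tr(\lambda_d)=1$ to get $\epsilon_{k,q}=tr(\lambda_d\,\delta_q)$, and then obtain \eqref{neq6} from the triangle inequality together with the rescaling of the $C^a(\omega_0+k\omega_\infty)$ norm on base directions, which absorbs the factor $k$ in $k(\nu_{k,q}-\omega_\infty)$ exactly as in the paper's estimate $k\|\cdot\|_{C^a(k\omega_\infty)}=\|\cdot\|_{C^a(\omega_\infty)}$.
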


\begin{proof}
Let $H_{k,q}= \textrm{Sym}^d h_{k,q} (I+k^{-1}\Phi_{k,q})$ and $g_{k,q}=\widehat{H_{k,q}}.$ Lemma \ref {lemH4} and Theorem \ref{thmH3} imply that \begin{align*}\rho_{k}(g_{k,q},\nu_{k,q})&=\frac{N_{k}}{k^{-m}C_{r,d}V_{k}}Tr(\lambda_{d}(v^*,H_{k,q})(I_{\textrm{Sym}^d E}+\delta_{q}))\\&=\frac{N_{k}}{k^{-m}V_{k}}(1+Tr(\lambda_{d}(v^*,H_{k,q})\delta_{q}))).\end{align*}
It concludes the first part of corollary, since $H_{k,q}$ is bounded and $||\delta_{k,q}||_{C^{a+2}}=O(k^{-q-1})$. 

For the second part, define $\widetilde{\omega_{0}}=\omega_{0}+k\omega_{\infty}$ and $g_{k,q}^{\prime}=\widehat{\textrm{Sym}^d h_{k,q}}$.  Notice that $g_{k,q}=g_{k,q}^{\prime}(\Phi_{k,q})$ (c.f. \eqref{2011eq5}). Now \eqref{2011eq6} implies that 
$$F(\Phi_{k,q})= tr(\lambda_{d}\Phi_{k,q})=tr(\lambda_{d}(I+k^{-1}\Phi_{1}+\dots))=1+k^{-1}F(\Phi_{1})+O(k^{-2}).$$ Therefore \eqref{2011eq7} shows that $$\frac{g_{k,q}}{g_{k,q}^{\prime}}=1+k^{-2}F(\Phi_{1})+O(k^{-3}).$$ Hence $$\omega_{g_{k,q}}-\omega_{g_{k,q}^{\prime}}= -k^{-2}i\overline{\partial}\partial F(\Phi_{1})+O(k^{-3}),$$ which implies that 
$||\omega_{g_{k,q}}-\omega_{g_{k,q}^{\prime}}||_{C^{a}(\omega_{0})}=O(k^{-2}).$  Thus,  \begin{align*}||\omega_{g_{k,q}}+&k\nu_{k,q}-(\omega_{0}+k\omega_{\infty})||_{C^{a}(\widetilde{\omega_{0}} )} \\&\leq ||\omega_{g_{k,q}}-\omega_{g_{k,q}^{\prime}}||_{C^{a}(\widetilde{\omega_{0})}}+||\omega_{g_{k,q}}^{\prime}-\omega_{0}||_{C^{a}(\widetilde{\omega_{0}})}+k||\nu_{k,q}-\omega_{\infty}||_{C^{a}(\widetilde{\omega_{0}} )}\\&\leq ||\omega_{g_{k,q}}-\omega_{g_{k,q}^{\prime}}||_{C^{a}(\omega_{0})}+||\omega_{g_{k,q}}^{\prime}-\omega_{0}||_{C^{a}(\omega_{0})}+k||\nu_{k,q}-\omega_{\infty}||_{C^{a}(k\omega_{\infty} )}\\&=||\omega_{g_{k,q}}-\omega_{g_{k,q}^{\prime}}||_{C^{a}(\omega_{0})}+||\omega_{g_{k,q}}^{\prime}-\omega_{0}||_{C^{a}(\omega_{0})}+||\nu_{k,q}-\omega_{\infty}||_{C^{a}(\omega_{\infty} )}\\&=O(k^{-1}).\end{align*} Notice that by definition, we have \begin{align*}&||\omega_{g_{k,q}}^{\prime}-\omega_{0}||_{C^a(\omega_{0})}=O(k^{-1}),\\&||\nu_{k,q}-\omega_{\infty}||_{C^a(\omega_{\infty})}=O(k^{-1}).\end{align*}
\end{proof}

\section{Proof of Theorem \ref{2011thm 1} and \ref{2011thm 1a}}

The goal of this section is to prove theorem \ref{2011thm 1} and Theorem \ref{2011thm 1a}.
In this section, we fix a background metric $h_{0}$ on $E$ and a K\"ahler form $\nu_{0}$ on $X$. We denote the chern connections on $E$ and $\textrm{Sym}^d E$ with respect to $h_{0}$ and $\textrm{Sym}^d h_{0} $ by $\nabla$.  All norms are with respect to $h_{0}$, $\textrm{Sym}^d h_{0}$ and $\nu_{0}$. In this section we use the multi-index notation as follows:

For a multi-index $I=(i_{1}, \dots , i_{r})$, define $|I|=i_{1}+\dots +i_{r}$ and $\lambda^I=\lambda_{1}^{i_{1}} \dots \lambda_{r}^{i_{r}} $.  Let $h$ be a hermitian metric on $E$ and $e_{1}, \dots , e_{r}$ be a local holomorphic frame for $E$. We define $h_{ij}=\langle e_{i}, e_{j} \rangle _{h}$ and denote the inverse of the matrix $(h_{ij})$ by $(h^{ij})$. The hermitian metric $h$ on $E$ induces a hermitian metric $\textrm{Sym}^d h $ on $\textrm{Sym}^d E$. Any multi-index $I=(i_{1}, \dots , i_{r})$ such that $|I|=d$ defines a local holomorphic section $e^I=e_{1}^{i_{1}} \dots e_{r}^{i_{r}}$ and the set $\{ e^I | |I|=d \}$ is a holomorphic local frame for $\textrm{Sym}^d E$. Define $h_{IJ}=\langle  e^I, e^J \rangle_{\textrm{Sym}^d h}$ and as before the matrix $(h^{IJ})$ is the inverse of the matrix $(h_{IJ})$.

\begin{Def}
 A smooth function $f$ on $\mathbb{P}E^*$ is called homogenous of order $k$ with respect to the hermitian metric $h$ if there exists a local holomorphic frame $\underline{e}=(e_{1},\dots,e_{r})$ on $E$ and smooth functions $f_{IJ}$ on $X$ such that $$f(\lambda)=\frac{\sum_{|I|=|J|=k} f_{IJ} \lambda^I\overline{\lambda}^J}{(\sum h^{ij}\lambda_{i}\overline{\lambda_{j}})^k},$$ where $\lambda_{1}, \dots \lambda_{r}$ are the homogenous coordinates on the fibres with respect to the local frame $\underline{e}$. 
We define $||f||_{p, \underline{e}, h}= \max_{I,J} ||F_{IJ}||_{c^p}$

\end{Def}

From now on, let $h$ be  a hermitian metric on $E$ and $g= \widehat{\textrm{Sym}^d h}$ be the induced metric on $\mathcal{O}_{\mathbb{P}E^*}(d)$. The smooth functions $f_{1}, \dots f_{m}$ on $\mathbb{P}E^*$ and smooth hermitian endomorphisms $\Psi_{1}, \dots \Psi_{m}$ of $\textrm{Sym}^d E$ are defined in section 3 (See \eqref{eqH3}  and \eqref{eqH8}) The first step to prove  Theorem \ref{2011thm 1} is to estimate  $||f_{1}||_{p, \underline{e}, h}, \dots ||f_{m}||_{p, \underline{e}, h}.$ We establish such an estimate in Proposition \ref{2011prop 2}. The second step is to find an upper bound for  $||\Psi_{i}||_{p, \underline{e}}$ in terms of $||f_{i}||_{p, \underline{e}}$. This is the content of Theorem \ref{2011thm 2}.

The following lemma is straightforward since $X$ is compact.

\begin{lem}\label{2011lem1}
For any $x \in X $, there exists a holomorphic local frame $e_{1},\dots,e_{r}$ on $E$ around $x$ such that 
\begin{equation}\label{2011eq2}\frac{1}{2}\leq ||[\langle e_{i}, e_{j}\rangle_{h_{0}}]||_{op} \leq 2 \,\,\,\,\, \textrm{and} \,\,\,\,\,\, ||\nabla^k e_{i}|| \leq C(p)  \,\,\,\,\,\, k=0, \dots, p,\end{equation} where $C(p)$  is a constant depends only on $p, \nu_{0}$ and $h_{0}$. 
\end{lem}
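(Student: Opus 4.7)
The plan is straightforward and rests on the compactness of $X$ together with the freedom to change a holomorphic frame by a constant element of $GL_r(\mathbb{C})$. The first condition is realized at the single point $x$ by choosing the right constant matrix; the second then follows because a finite atlas with fixed trivializations of $E$ yields uniform bounds on the ingredients.

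First, I would cover $X$ by finitely many open sets $U_\alpha$ with $\overline{U_\alpha}$ contained in a slightly larger open $U_\alpha'$ on which $E$ admits a holomorphic frame $\tilde{e}^\alpha_1,\dots,\tilde{e}^\alpha_r$. Set $M_\alpha := [\langle \tilde{e}^\alpha_i,\tilde{e}^\alpha_j\rangle_{h_0}]$. Since $h_0$ is smooth and $\overline{U_\alpha}$ is compact, the matrix $M_\alpha$ is smooth, uniformly positive-definite with eigenvalues bounded above and bounded away from $0$, and the covariant derivatives $\nabla^k \tilde{e}^\alpha_i$ are uniformly $C^0$-bounded for $k\le p$. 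Taking the maximum over the finite index set $\{\alpha\}$ produces bounds depending only on $p$, $h_0$, $\nu_0$.

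Given $x \in X$, I would pick $\alpha$ with $x \in U_\alpha$ and define a new holomorphic frame by a \emph{constant} linear change of basis: set $e_i = \sum_j A_{ji}\tilde{e}^\alpha_j$, where $A \in GL_r(\mathbb{C})$ satisfies $A^* M_\alpha(x) A = I_r$ (such $A$ is provided by a Cholesky factorization of $M_\alpha(x)^{-1}$). The uniform bounds on the eigenvalues of $M_\alpha$ on $\overline{U_\alpha}$ give a uniform bound on $\|A\|_{op}$ and $\|A^{-1}\|_{op}$ depending only on $h_0$, $\nu_0$. Then $[\langle e_i,e_j\rangle_{h_0}](x) = I_r$, and the uniform $C^1$-bound on $M_\alpha$ (together with the mean value theorem) ensures that on a neighborhood of $x$ whose size is bounded below uniformly in $x$ and $\alpha$, the operator-norm estimate $\tfrac12 \le \|[\langle e_i,e_j\rangle_{h_0}]\|_{op} \le 2$ holds.

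The covariant derivative bound is then immediate: because $A$ is constant, $\nabla^k e_i = \sum_j A_{ji}\,\nabla^k \tilde{e}^\alpha_j$, and both the uniform bound on $\|A\|_{op}$ and the uniform bound on $\|\nabla^k \tilde{e}^\alpha_j\|$ over $\overline{U_\alpha}$ combine to yield $\|\nabla^k e_i\| \le C(p)$ with $C(p)$ depending only on $p$, $h_0$, $\nu_0$. The only (very mild) technical point is ensuring the shrinking radius needed for the operator-norm estimate is uniform in $x$, which is what the uniform $C^1$-control on $M_\alpha$ delivers; no serious obstacle arises in this lemma.
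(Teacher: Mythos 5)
Your proof is correct and is essentially the argument the paper has in mind: the paper offers no written proof, stating only that the lemma "is straightforward since $X$ is compact," and your finite-cover-plus-constant-change-of-frame argument is the standard way to fill in that assertion. No issues.
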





\begin{lem}\label{2011lem2}(\cite[Lem.\ 5.2 ]{S})
For any positive integer $p$ there exists a constant $C$ such that for any
$(j,j)$-form $\gamma$, we have
$$||\nabla^p( \Lambda^j
\gamma)||_{\nu_{0}} \leq \frac{C}{\inf_{x \in
X}|\omega(x)^m|_{\nu_{0}(x)}}(||\gamma||_{C^p(\nu_{0})}+
||\Lambda^j
\gamma||_{C^{p-1}(\nu_{0})})(\sum_{i=1}^{m}||\omega||_{C^p(\nu_{0})}^i),$$
where $\nabla$ is the connection defined with respect to $\nu_{0}$.
\end{lem}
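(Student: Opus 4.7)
The plan is to reduce the estimate to a scalar identity between ordinary smooth functions on $X$ and then differentiate. The pointwise algebraic identity
$$\Lambda^j\gamma \cdot \omega^m \;=\; c_{m,j}\, \gamma \wedge \omega^{m-j},$$
valid for every $(j,j)$-form $\gamma$ with an explicit combinatorial constant $c_{m,j}>0$, expresses $\Lambda^j\gamma$ as a ratio of top-degree forms whose denominator is bounded below by exactly the quantity $\inf_{x\in X}|\omega(x)^m|_{\nu_0(x)}$ appearing in the claim. Setting $\Phi:=\omega^m/\nu_0^m$ and $\Psi:=\gamma\wedge\omega^{m-j}/\nu_0^m$, the identity becomes the ordinary scalar relation $\Phi\cdot \Lambda^j\gamma=c_{m,j}\Psi$ on $X$, so one no longer has to commute $\nabla$ with wedge products.

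Apply $\nabla^p$ to both sides via the Leibniz rule and isolate the top-order term on the left:
$$\Phi \cdot \nabla^p(\Lambda^j\gamma) \;=\; c_{m,j}\,\nabla^p\Psi \;-\; \sum_{k=0}^{p-1}\binom{p}{k}\,\nabla^k(\Lambda^j\gamma)\otimes \nabla^{p-k}\Phi.$$
The crucial structural feature is that the top derivative $\nabla^p$ of $\Lambda^j\gamma$ appears only in the isolated term, so in the remaining sum only $\nabla^k(\Lambda^j\gamma)$ with $k \leq p-1$ enters; this is the origin of the factor $\|\Lambda^j\gamma\|_{C^{p-1}}$, rather than $\|\Lambda^j\gamma\|_{C^p}$, on the right-hand side of the claim. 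Dividing by $\Phi$, using $\Phi\geq\inf_X|\omega^m|_{\nu_0}$, and taking pointwise $\nu_0$-norms yields
$$|\nabla^p(\Lambda^j\gamma)| \;\leq\; \frac{1}{\inf_{x\in X}|\omega(x)^m|_{\nu_0(x)}}\Bigl(c_{m,j}|\nabla^p\Psi|+\sum_{k=0}^{p-1}\binom{p}{k}|\nabla^k(\Lambda^j\gamma)|\cdot |\nabla^{p-k}\Phi|\Bigr).$$

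The final step is polynomial bookkeeping. Since $\Phi$ is a polynomial of total degree $m$ in $\omega$ with $\nu_0$-dependent coefficients, and $\Psi$ is linear in $\gamma$ and of degree $m-j$ in $\omega$, applying Leibniz once more to each $\nabla^\ell\Phi$ and to $\nabla^p\Psi$ shows that $|\nabla^\ell\Phi|\lesssim \sum_{i=1}^m \|\omega\|_{C^p}^i$ for all $1\leq \ell\leq p$, and $|\nabla^p\Psi|\lesssim \|\gamma\|_{C^p}\sum_{i=1}^m \|\omega\|_{C^p}^i$. Substituting into the displayed inequality and taking the supremum over $X$ produces the claim with a constant $C$ depending only on $p$ (after absorbing $m$ and the finitely many possible values of $j$).

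The main care point is combinatorial rather than conceptual: one has to verify that every Leibniz term contributing on the right-hand side has $\omega$-degree between $1$ and $m$, so that the polynomial factor is of the clean form $\sum_{i=1}^m \|\omega\|_{C^p}^i$ with no spurious $\|\omega\|^0$ term; and that the extraction of the top-order contribution is genuinely clean, i.e.\ that no other monomial in the Leibniz expansion of $\nabla^p(\Phi \cdot \Lambda^j\gamma)$ secretly contains a factor of $\nabla^p(\Lambda^j\gamma)$. Both are immediate from the Leibniz rule but demand a moment of verification.
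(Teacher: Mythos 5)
The paper does not actually prove this lemma: it is quoted verbatim from \cite[Lem.\ 5.2]{S}, so there is no internal proof to compare against. Your strategy is the natural one and is sound in its main lines. The pointwise identity $\Lambda^j\gamma\cdot\omega^m=c_{m,j}\,\gamma\wedge\omega^{m-j}$ does hold for every $(j,j)$-form (in the Lefschetz decomposition of $\gamma$ only the component $\beta_0\,\omega^j$ with $\beta_0$ a function survives both $\Lambda^j$ and wedging with $\omega^{m-j}$, and both operations return a nonzero multiple of $\beta_0$); it reduces everything to a scalar Leibniz computation in which $\nabla$ never has to act on $\Lambda$ itself, and isolating the unique term containing $\nabla^p(\Lambda^j\gamma)$ correctly accounts for the appearance of $\|\Lambda^j\gamma\|_{C^{p-1}}$ rather than $\|\Lambda^j\gamma\|_{C^p}$ on the right-hand side. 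This is, in substance, the argument one expects behind the cited lemma.

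The one point you dismiss as ``immediate'' is not, and it is the only place the argument can break: when $j=m$ you have $\Psi=\gamma\wedge\omega^{m-j}/\nu_0^m=\gamma/\nu_0^m$, which contains no factor of $\omega$, so $|\nabla^p\Psi|\lesssim\|\gamma\|_{C^p}$ carries $\omega$-degree zero and cannot be absorbed into a polynomial factor $\sum_{i=1}^m\|\omega\|_{C^p}^i$ that starts at $i=1$ unless $\|\omega\|_{C^p}$ is bounded below. (Rescaling $\omega\mapsto\epsilon\omega$ and taking $\gamma=f\nu_0^m$ with $f$ highly oscillatory shows the inequality with the sum starting at $i=1$ genuinely fails in this degenerate regime when $m\ge 2$.) This is as much an imprecision of the quoted statement as a defect of your proof: when the paper actually invokes the lemma in the proof of Proposition \ref{2011prop 2}, it uses the factor $\sum_{i=0}^m\|\omega\|_{C^p}^i$, which your argument does deliver for all $0\le j\le m$. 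So you should either restrict to $j<m$, add a lower bound on $\omega$ (harmless in every application here, where $\omega$ runs in a family bounded below), or prove the estimate with the sum starting at $i=0$; with any of these amendments your proof is complete.
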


\begin{prop}\label{2011prop 2}
For any  $1\leq j \leq m$, the function $f_{j}$ is homogenous of order $j$ with respect to $h$ ( For definition of $f_{j}$, see \eqref{eqH3}).

 Moreover there exists a constant $C$ depends only on $p$ and $m$ such that  for any local holomorphic frame $\underline{e}=(e_{1}, \dots e_{r})$ satisfying \eqref{2011eq2}, we have    

$$||f_{i}||_{p, \underline{e}, h} \leq C \max \Big ( \Big | \Big | \frac{\nu_{0}^m}{\omega ^m} \Big |\Big|_{C^0}^p , 1\Big )  ||h||_{C^{p+2}}^j \Big (  \sum_{i=0}^m ||\omega||_{C^p}^i \Big)^{p+1}  $$
\end{prop}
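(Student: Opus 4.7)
The plan is to expand $f_j$ explicitly in a local holomorphic trivialization and then control its $C^p$-norm using Lemma~\ref{2011lem2} together with Leibniz-type bookkeeping. First, I would fix a local holomorphic frame $\underline{e}=(e_1,\dots,e_r)$ satisfying \eqref{2011eq2}, let $\lambda_1,\dots,\lambda_r$ be the associated fiber coordinates on $E^*$, and set $\Phi=h^{ij}\lambda_i\bar\lambda_j$. Since $g=\hat h^{\otimes d}$, one has $\omega_g=d\cdot i\bar\partial\partial\log\Phi$, which I decompose as $\omega_g = \omega_V + \omega_M^{(1,0)|(0,1)} + \omega_M^{(0,1)|(1,0)} + \omega_H$ according to the splitting $T\mathbb{P}E^*=T_{\mathrm{vert}}\oplus T_{\mathrm{horiz}}$. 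Each piece has an explicit rational form in $\lambda,\bar\lambda$ whose bidegree in $(\lambda,\bar\lambda)$ is matched by the power of $\Phi$ appearing in the denominator, so that each piece is well-defined on the projectivization.

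To prove the homogeneity statement, I would expand $\omega_g^{m+r-1-j}\wedge\omega^j$ multinomially and isolate the top-bidegree contributions on $\mathbb{P}E^*$. Matching the vertical bidegree to $(r-1,r-1)$ and the total horizontal bidegree (together with $\omega^j$) to $(m,m)$ shows that only the terms with $r-1-b$ factors of $\omega_V$, $b$ factors of each mixed piece, and $m-j-b$ factors of $\omega_H$ contribute, where $b\in\{0,\dots,\min(r-1,m-j)\}$. Dividing by $d\mu_g=\omega_V^{r-1}/(r-1)!\wedge\omega^m/m!$ and consolidating powers of $\Phi$, I would check that each $b$-term takes the form $P_{j,b}(z,\lambda,\bar\lambda)/\Phi^j$ with $P_{j,b}$ of bidegree $(j,j)$ in $(\lambda,\bar\lambda)$ and $z$-coefficients in $C^\infty(X)$; summing over $b$ represents $f_j$ as homogeneous of order $j$.

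For the quantitative estimate, each coefficient $(P_{j,b})_{IJ}(z)$ is built from expressions $\alpha\wedge\omega^j/\omega^m$ where $\alpha\in\Omega^{m-j,m-j}(X)$ is assembled from $F_h$, entries of $h$, and entries of $h^{ij}$. Rewriting this ratio as the iterated trace $\Lambda^{m-j}(\alpha)$ with respect to $\omega$, I would apply Lemma~\ref{2011lem2} $(m-j)$ times to produce the factor $\max(\|\nu_0^m/\omega^m\|_{C^0}^p,1)\bigl(\sum_{i=0}^m\|\omega\|_{C^p}^i\bigr)^{p+1}$. The frame estimate \eqref{2011eq2} together with Cramer's rule controls the entries of $h^{ij}$ in $C^{p+2}$ by $\|h\|_{C^{p+2}}$, and the Leibniz rule then produces the factor $\|h\|_{C^{p+2}}^j$ in the final bound. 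Assembling these estimates and summing over $b$ and the finitely many multi-indices $(I,J)$ with $|I|=|J|=j$ yields the claimed inequality with a constant depending only on $p$ and $m$.

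The main obstacle is the combinatorial bookkeeping needed to force the exponent of $\|h\|_{C^{p+2}}$ to be exactly $j$: a naive count of two derivatives of $h$ per $\omega_H$ factor and one per $\omega_M$ factor suggests exponent $2(m-j)$. The reduction to exponent $j$ requires carefully redistributing factors of $\Phi^{-1}$ between numerator and denominator when promoting the minimal-order representation of $f_j$ up to order $j$, exploiting the identity $\omega_H=(\partial\bar\partial h^{ij}\lambda_i\bar\lambda_j)/\Phi - (\partial h^{ij}\lambda_i\bar\lambda_j)\wedge(\bar\partial h^{kl}\lambda_k\bar\lambda_l)/\Phi^2$ to re-express wedge powers so that each excess factor of $\Phi$ in a numerator cancels one in a denominator, and using that $h$ is bounded from below so that $\|h^{ij}\|_{C^p}$ is comparable to $\|h\|_{C^p}$. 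Once this normal form is in hand the remaining steps reduce to routine applications of Lemma~\ref{2011lem2} and the Leibniz rule.
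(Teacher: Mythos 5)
Your overall strategy --- trivialize in a local holomorphic frame, write $f_j$ as a rational function of $(\lambda,\bar\lambda)$ whose coefficients are iterated traces of forms built from $F_h$, and then invoke Lemma \ref{2011lem2} together with the frame bounds \eqref{2011eq2} --- is the same as the paper's. But the step you yourself flag as ``the main obstacle'' is exactly where your argument has a gap, and your diagnosis of that obstacle is off in two ways. First, the factor $2$ in your count ``$2(m-j)$'' comes from conflating the \emph{number of derivatives} of $h$ with the \emph{power of the norm} $\|h\|_{C^{p+2}}$: each curvature entry $\omega_{ij}$ contributes one factor bounded in $C^p$ by $\|h\|_{C^{p+2}}$ (the ``$+2$'' absorbs the two derivatives), so a product of $N$ curvature entries costs $\|h\|_{C^{p+2}}^N$, not $\|h\|^{2N}$. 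Second, the exponent is the number of curvature factors appearing, which for the function defined by $\omega_g^{r-1+j}\wedge\omega^{m-j}=(\ast)\, f_{m-j}\,\omega_g^{r-1}\wedge\omega^m$ is $j$ (it is $f_{m-j}$, not $f_j$, that is homogeneous of order $j$ --- the proposition's indexing is internally inconsistent, as one already sees from Lemma \ref{lemH2}, where $f_{m-1}$ is homogeneous of order $1$). Once the indices are read correctly there is no discrepancy to resolve, and your proposed fix --- promoting the representation by redistributing powers of $\Phi=\sum h^{ij}\lambda_i\bar\lambda_j$ between numerator and denominator --- is both unnecessary and not actually carried out; moreover such a promotion would introduce extra factors of $h^{ij}$ into the numerator coefficients and spoil the clean bound $\|\Omega_{IJ}\|_{C^p}\le C\|h\|_{C^{p+2}}^j$.

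The missing idea that makes the paper's argument short is the collapse of your multinomial expansion to a single term. Writing $\omega_g=\omega_{\mathrm{FS},g}+\frac{\sum\omega_{ij}\lambda_i\bar\lambda_j}{\sum h^{ij}\lambda_i\bar\lambda_j}$ (all vertical and mixed contributions absorbed into $\omega_{\mathrm{FS},g}$), degree considerations give
$$f_{m-j}\,\omega^m=\binom{m}{j}\Big(\tfrac{\sum\omega_{ij}\lambda_i\bar\lambda_j}{\sum h^{ij}\lambda_i\bar\lambda_j}\Big)^j\wedge\omega^{m-j},$$
after cancelling $\omega_{\mathrm{FS},g}^{r-1}$ (equivalently $\omega_g^{r-1}$) from both sides of the defining relation \eqref{eqH3}. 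Hence $f_{m-j}=\big(\sum_{I,J}\Lambda^j\Omega_{IJ}\,\lambda^I\bar\lambda^J\big)\big/\big(\sum h^{ij}\lambda_i\bar\lambda_j\big)^j$ with each $\Omega_{IJ}$ a $(j,j)$-form that is a sum of products of exactly $j$ curvature entries, so $\|\Omega_{IJ}\|_{C^p}\le C\|h\|_{C^{p+2}}^j$, and a single induction on $p$ via Lemma \ref{2011lem2} (whose recursive bound in $C^{p-1}$ is what produces the exponent $p+1$ on $\sum_i\|\omega\|_{C^p}^i$ and the power $p$ on $\|\nu_0^m/\omega^m\|_{C^0}$) yields the stated estimate. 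Your sketch never establishes this normal form, and your separate tracking of vertical, mixed, and horizontal pieces over the index $b$ leaves the crucial cancellation unproven.
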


\begin{proof}
Let $e_{1},...,e_{r}$ be a local
holomorphic frame for $E$ around $p$. Define $h_{ij}=\langle e_{i},
e_{j}\rangle _{h}$ and $\frac{i}{2\pi
}F_{h}=(\omega_{ij})$. Let $\lambda_{1},...,\lambda_{r}$ be the homogeneous coordinates
on the fibre. We have $$\omega_{g}=
\omega_{\textrm{FS},g}+\frac{\sum \omega_{ij} \lambda_{i}\overline{\lambda_{j}}}{\sum
h^{ij}\lambda_{i}\overline{\lambda_{j}}}.$$
Therefore, $\displaystyle \omega_{g}^{r+j-1} \wedge
\omega^{m-j}={r+j-1 \choose r-1}\omega_{\textrm{FS},g}^{r-1}
\wedge \Big(\frac{\sum \omega_{ij} \lambda_{i}\overline{\lambda_{j}}}{\sum
h^{ij}\lambda_{i}\overline{\lambda_{j}}}\Big)^j \wedge \omega^{m-j}$.
The definition of $f_{m-j}$ gives
$$f_{m-j}\omega_{g}^{r-1}\wedge
\omega^m={m \choose j}\omega_{g}^{r-1} \wedge \Big(
\Big(\frac{\sum \omega_{ij} \lambda_{i}\overline{\lambda_{j}}}{\sum
h^{ij}\lambda_{i}\overline{\lambda_{j}}}\Big)^j \wedge \omega^{m-j} \Big)$$
Hence
$$f_{m-j}\omega_{\textrm{FS},g}^{r-1}\wedge
\omega^m={m \choose j}\omega_{\textrm{FS},g}^{r-1} \wedge \Big(
\Big(\frac{\sum \omega_{ij} \lambda_{i}\overline{\lambda_{j}}}{\sum
h^{ij}\lambda_{i}\overline{\lambda_{j}}}\Big)^j \wedge \omega^{m-j} \Big).$$
Therefore,
$$\omega_{\textrm{FS},g}^{r-1}\wedge\Big(f_{m-j}
\omega^m-{m \choose j} \Big(\frac{\sum \omega_{ij} \lambda_{i}\overline{\lambda_{j}}}{\sum
h^{ij}\lambda_{i}\overline{\lambda_{j}}}\Big)^j\wedge \omega^{m-j}
\Big)=0,$$ which implies
$$f_{m-j} \omega^m={m \choose j} \Big(\frac{\sum \omega_{ij} \lambda_{i}\overline{\lambda_{j}}}{\sum
h^{ij}\lambda_{i}\overline{\lambda_{j}}}\Big)^j\wedge
\omega^{m-j}={m \choose j} \frac{\sum_{I,J} \Omega_{IJ}\lambda^{I}\overline{\lambda}^{J}}{\big(\sum
h^{ij}\lambda_{i}\overline{\lambda_{j}}\big)^j} \wedge
\omega^{m-j}.$$ 
Thus, $$f_{m-j}= \frac{\sum_{I,J} \Lambda^j \Omega_{IJ}\lambda^{I}\overline{\lambda}^{J}}{\big(\sum
h^{ij}\lambda_{i}\overline{\lambda_{j}}\big)^j}.$$
There exists a constant $C$ depends only on $p $  and $m$ such that 
$$||\Omega_{IJ}||_{C^p} \leq  C\max_{i,j} ||\omega_{ij}||_{C^p}^j \leq C ||h||_{C^{p+2}}^j.$$
Applying Lemma \ref{2011lem2}, we obtain $$||\Lambda^j \Omega_{IJ}||_{C^p}\leq C \max \Big ( \Big | \Big | \frac{\nu_{0}^m}{\omega ^m} \Big |\Big|_{C^0} , 1\Big ) \Big( ||h||_{C^{p+2}}^j+ ||\Lambda^j \Omega_{IJ}||_{C^{p-1}} \Big)\Big (  \sum_{i=0}^m ||\omega||_{C^p}^i \Big). $$ 
Note that $$\Big (\inf_{x \in X} \Big | \frac{\omega(x)^m}{\nu_{0}(x)^m} \Big|\Big)^{-1}= \sup_{x \in X} \Big | \frac{\nu_{0}(x)^m}{\omega(x) ^m} \Big|=\Big | \Big | \frac{\nu_{0}^m}{\omega ^m} \Big |\Big|_{C^0}.$$
Hence induction on $p$ concludes the proof.


\end{proof}

\begin{Def}
For any smooth function $f$ on $\mathbb{P}E^*$, there exists a unique endomorphism $\Psi(f) \in End(\textrm{Sym}^d E)$ which is hermitian with respect to $\textrm{Sym}^d h$ defined as follows: 
$$\langle s, \Psi(f) t      \rangle_{\textrm{Sym}^d h}= C_{r,d}^{-1}\int_{\mathbb{P}E_{x}^*} f \langle \hat{s}, \hat{t}      \rangle_{g} \frac{\omega_{g}^{r-1}}{(r-1)!},$$ for any $x \in M$ and $s, t \in \textrm{Sym}^d E_{x}$. Here $g= \widehat{\textrm{Sym}^d h}$ is the induced metric on $\mathcal{O}_{\mathbb{P}E^*}(d)$ by $h$.
Note that we have $\Psi_{i}= \Psi(f_{i})$.
\end{Def}

\begin{Def}
Let $\Psi$ be an endomorphism of $\textrm{Sym}^d E$ and $\underline{e}=(e_{1},\dots,e_{r})$ be a local holomorphic coordinate on $E$. Then there exist local functions $\Psi_{IJ}$ on $X$ such that 
$$\Psi e^I=\sum_{J} \Psi_{IJ} e^J.$$ We define $$||\Psi||_{p, \underline{e}}= \max ||\Psi_{IJ}||_{c^p}.$$

\end{Def}

\begin {thm}\label{2011thm 2}

There exists a constant $C$ depends only on $p$ such that for any  local holomorphic frame $\underline{e}=(e_{1},\dots,e_{r})$  satisfying \eqref{2011eq2} and  any smooth homogeneous function $f$ of  order $k$ on $\mathbb{P}E^*$, we have
 $$||\Psi(f)||_{C^p} \leq C ||f||_{p, \underline{e}}.$$

\end{thm}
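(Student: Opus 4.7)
The plan is to reduce the pushforward integral defining $\Psi(f)$ to an explicit finite linear expression in the coefficients $f_{IJ}$ and then control $C^p$ norms via the Leibniz rule. Fix a point $x \in X$ and use the frame $\underline{e}$ to identify $\mathbb{P}E_x^* \cong \mathbb{P}^{r-1}$ with homogeneous coordinates $[\lambda_1 : \cdots : \lambda_r]$. Under the canonical isomorphism $\textrm{Sym}^d E_x \cong H^0(\mathbb{P}E_x^*,\mathcal{O}_{\mathbb{P}E_x^*}(d))$, the section $e^K$ corresponds to the polynomial $\lambda^K$, and the fibrewise inner product from $g = \widehat{\textrm{Sym}^d h}$ takes the explicit form
\[
\langle \widehat{e^K}, \widehat{e^L} \rangle_g([\lambda]) = \frac{\lambda^K \overline{\lambda}^L}{\bigl(\sum h^{pq}\lambda_p\overline{\lambda}_q\bigr)^d}.
\]

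Substituting this together with the homogeneous expansion of $f$ into the defining integral for $\Psi(f)$, each bilinear value $\langle e^K, \Psi(f) e^L \rangle_{\textrm{Sym}^d h}(x)$ becomes a finite linear combination, with coefficients $f_{IJ}(x)$, of fibre integrals
\[
\int_{\mathbb{P}E_x^*} \frac{\lambda^M \overline{\lambda}^N}{\bigl(\sum h^{pq}\lambda_p\overline{\lambda}_q\bigr)^{d+k}} \frac{\omega_g^{r-1}}{(r-1)!}.
\]
A linear change of variables $\mu = (h^{ij})^{1/2}\lambda$ reduces these to the standard monomial integrals of $\mu^M\overline{\mu}^N(1+|\mu|^2)^{-(d+k+r)}$ on $\mathbb{C}^{r-1}$, which vanish unless $M=N$ and otherwise evaluate to explicit beta-function constants. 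Tracking the $h$-dependence through the Jacobian of this change of variables, and inverting the Gram matrix $(h_{IJ}) = \langle e^I, e^J\rangle_{\textrm{Sym}^d h}$ whose entries are polynomial in $h_{ij}$ by the formulae of Section 2, one arrives at
\[
\Psi(f)_{IJ}(x) = \sum_{I',J'} A^{I'J'}_{IJ}(h(x))\, f_{I'J'}(x),
\]
where each coefficient $A^{I'J'}_{IJ}$ is a universal rational function of the entries of $h$ and $h^{-1}$ (becoming polynomial after multiplying by an appropriate power of $\det h$).

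Finally, condition \eqref{2011eq2} furnishes uniform two-sided control of $(h_{ij})$ relative to the background $h_0$, hence of $(h^{ij})$ and $\det h$; combined with the $C^{p+2}$-bound on $h$ implicit in the ambient hypotheses, this bounds each coefficient $A^{I'J'}_{IJ}(h)$ together with all its derivatives up to order $p$. Applying the Leibniz rule to the bilinear expression above and maximizing over the multi-indices then yields $\|\Psi(f)\|_{C^p}\leq C\,\|f\|_{p,\underline{e},h}$. The main obstacle is the middle step: carrying out the fibre integration cleanly while tracking the Jacobian factors, the symmetrization constants arising from the basis $e^I$, and the inversion of $(h_{IJ})$; once this bookkeeping is in place, the derivative estimates in the last step are essentially a routine Leibniz calculation.
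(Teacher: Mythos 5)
Your argument is essentially the paper's own proof: the paper likewise writes $\Psi(f)e^I=\sum_J\Psi_{IJ}e^J$, expands $\Psi_{IJ}=\sum f_{KL}\,c_{KLIJ}$ with $c_{KLIJ}$ the explicit fibre integrals of $\lambda^K\overline{\lambda}^L\lambda^I\overline{\lambda}^J(\sum h^{ij}\lambda_i\overline{\lambda}_j)^{-(d+N)}$ against $\omega_g^{r-1}$, bounds these coefficients, and then passes from the frame norm $\|\Psi\|_{p,\underline{e}}$ to $\|\Psi\|_{C^p}$ by a Leibniz/induction argument using the bounds \eqref{2011eq2} on $\nabla^k e_i$ -- exactly your two steps, just packaged as two separate lemmas. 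Your version is if anything slightly more explicit about the change of variables and the residual dependence of the coefficients on $h$, which the paper leaves implicit; the substance is the same.
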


The proof follows from the following lemmas.

\begin{lem}

There exists a constant $C$ depends only on $p$ such that for any  local holomorphic frame $\underline{e}=(e_{1},\dots,e_{r})$  and  any smooth homogeneous function $f$ of  order $N$ on $\mathbb{P}E^*$, we have
 $$||\Psi(f)||_{p, \underline{e}} \leq C ||f||_{p, \underline{e}}.$$

\end{lem}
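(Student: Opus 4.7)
The plan is to compute the matrix entries of $\Psi(f)$ in the frame $\{e^I\}$ via explicit fiber integration, and then estimate the result in $C^p$ by applying Leibniz to a closed-form expression.

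First, I would express $\Psi(f)$ concretely: writing $\Psi(f) e^I = \sum_J \Psi(f)_{IJ} e^J$, the defining property of $\Psi(f)$ paired with $e^K$ against $\textrm{Sym}^d h$ gives
\begin{equation*}
\sum_J h_{KJ}\,\Psi(f)_{IJ} \;=\; C_{r,d}^{-1}\int_{\mathbb{P}E_x^*} f \cdot \langle \widehat{e^I},\widehat{e^K}\rangle_g \,\frac{\omega_g^{r-1}}{(r-1)!}.
\end{equation*}
In homogeneous fiber coordinates $\lambda_1,\dots,\lambda_r$ determined by $\underline{e}$, both $f$ and $\langle \widehat{e^I},\widehat{e^K}\rangle_g$ are rational functions with denominator a power of $\sum h^{ab}\lambda_a\bar\lambda_b$, and the numerator of the integrand is linear in the coefficients $f_{AB}$ of $f$.

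Second, after restriction to an affine fiber chart $\mathbb{C}^{r-1}\subset \mathbb{P}E_x^*$ and a linear change of variables diagonalizing $h^{-1}$ at $x$, each such fiber integral reduces to explicitly computable integrals of the form
\begin{equation*}
\int_{\mathbb{C}^{r-1}} \frac{\xi^A\bar\xi^B}{(1+|\xi|^2)^{N+d+r}}\,d\xi\wedge d\bar\xi.
\end{equation*}
Inverting the matrix $(h_{KJ})$ then yields a pointwise linear identity
\begin{equation*}
\Psi(f)_{IJ}(x) \;=\; \sum_{A,B} c_{IJ}^{AB}(h(x))\, f_{AB}(x),
\end{equation*}
where each coefficient $c_{IJ}^{AB}(h)$ is a rational function (polynomial after multiplying by a power of $\det h$) in the entries of $h$ and $h^{-1}$.

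Third, I would differentiate up to order $p$ in $x$. By Leibniz, $\nabla^p\Psi(f)_{IJ}$ is a finite sum of products of covariant derivatives of the entries of $h$ and $h^{-1}$, of total order at most $p$, times $\nabla^j f_{AB}$ for some $j\leq p$. The frame bounds from Lemma \ref{2011lem1}, together with the standing assumption that $h$ is bounded, uniformly control each factor of $h$ and $h^{-1}$ in $C^p$ by a constant depending only on $p$ (and the fixed background data $h_0,\nu_0$). This gives $\|\Psi(f)_{IJ}\|_{C^p}\leq C\max_{A,B}\|f_{AB}\|_{C^p}=C\|f\|_{p,\underline{e}}$, as required. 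The main obstacle is the bookkeeping in step two: writing a closed form for the fiber integrals that simultaneously displays linearity in the $f_{AB}$'s and smooth dependence on $h$, so that step three becomes a routine Leibniz estimate.
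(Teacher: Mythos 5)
Your proposal follows the same route as the paper: expand $f$ in the fibre coordinates attached to $\underline{e}$, compute the matrix entries of $\Psi(f)$ by explicit fibre integration, and write $\Psi(f)_{IJ}$ as a linear combination of the coefficients $f_{AB}$ with coefficients $c_{IJ}^{AB}$ given by the integrals you display. The one place the two arguments part ways is the treatment of the $x$-dependence of those coefficients, and it is exactly the point where your version stops matching the statement. The paper never differentiates the $c$'s: it records only the pointwise bound $\abs{c_{KLIJ}}\leq 1$ and passes directly to the $C^p$ estimate, so that the constant can be claimed to depend on $p$ alone and all dependence on $h$ is carried by $\norm{f}_{p,\underline{e},h}$ (this is where the factors $\norm{h}_{C^{p+2}}^j$ in Proposition \ref{2011prop 2} and Theorem \ref{2011thm 1} live). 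Your Leibniz step is the more honest account of what happens --- the $c_{IJ}^{AB}$ are rational in the entries of $h$ and $h^{-1}$ at $x$, so $\nabla^p\Psi(f)_{IJ}$ genuinely involves their derivatives --- but to control them you invoke a ``standing assumption that $h$ is bounded'' and Lemma \ref{2011lem1}, neither of which is available here: this lemma assumes nothing about the frame (condition \eqref{2011eq2} enters only in the companion lemma, and in any case bounds the frame against the background metric $h_0$, not against $h$), and no $C^p$ bound or positive lower bound on $h$ is hypothesized; a $C^0$ bound on $h$ would not control $h$ and $h^{-1}$ in $C^p$ in any case. So what your argument actually yields is the estimate with a constant depending additionally on $\norm{h}_{C^p}$ and $\inf h$. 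That is adequate for every application in Section 5, where $h$ does run in a bounded family, but it is not the literal statement, and it double-counts the $h$-dependence that the paper's bookkeeping assigns entirely to $\norm{f}_{p,\underline{e},h}$. To get a constant depending on $p$ alone you must either argue, as the paper implicitly does, without differentiating the coefficients, or restate the lemma with the $h$-dependence made explicit and propagate it through Theorem \ref{2011thm 1}.
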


\begin{proof}

We have $$\Psi(f) e^I=\sum_{J} \Psi_{IJ} e^J.$$ Let  $\lambda_{1},\dots,\lambda_{r}$ be the homogeneous coordinates
on the fibre. Suppose that $$f(\lambda)=\frac{\sum f_{IJ} \lambda^I\overline{\lambda}^J}{(\sum h^{ij}\lambda_{i}\overline{\lambda_{j}})^N}.$$Therefore 

\begin{align*}\Psi_{IJ}&= \int f(\lambda) \frac{ \lambda^I\overline{\lambda}^J}{(\sum h^{ij}\lambda_{i}\overline{\lambda_{j}})^d} \frac{\omega_{g}^{r-1}}{(r-1)!}\\&= \sum f_{KL} \int \frac{  \lambda^K\overline{\lambda}^L\lambda^I\overline{\lambda}^J}{(\sum h^{ij}\lambda_{i}\overline{\lambda_{j}})^{d+N}} \frac{\omega_{g}^{r-1}}{(r-1)!}\\&= \sum f_{KL} c_{KLIJ},\end{align*}

Here $$c_{KLIJ}=\int \frac{  \lambda^K\overline{\lambda}^L\lambda^I\overline{\lambda}^J}{(\sum h^{ij}\lambda_{i}\overline{\lambda_{j}})^{d+N}} \frac{\omega_{g}^{r-1}}{(r-1)!}.$$ An easy computation shows that $|C_{KLIJ}| \leq 1.$
Thus $$||\Psi_{IJ}||_{c^p} \leq C \max ||f_{KL}||_{c^p} = C ||f||_{p, \underline{e}}.$$

\end{proof}

\begin{lem}

There exists a constant $C$ depends only on $p$ such that for any  local holomorphic frame $\underline{e}=(e_{1},\dots,e_{r})$ satisfying \eqref{2011eq2} and smooth endomorphism $\Psi$ of $\textrm{Sym}^d E$, we have
 $$||\Psi||_{C^p} \leq C ||\Psi||_{p, \underline{e}}.$$

\end{lem}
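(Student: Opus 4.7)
The plan is to expand $\Psi$ in the given local holomorphic frame and control the $C^p$ norm via the Leibniz rule for covariant derivatives. Writing $\Psi = \sum_{I,J} \Psi_{IJ}\, e^J \otimes (e^I)^{\vee}$, where $\{(e^I)^{\vee}\}$ denotes the dual frame of $\{e^I\}$ in $(\textrm{Sym}^d E)^*$, the covariant derivative $\nabla^p \Psi$ expands into a finite sum of terms of the schematic form $(\nabla^a \Psi_{IJ}) \otimes (\nabla^b e^J) \otimes (\nabla^c (e^I)^{\vee})$ with $a+b+c \leq p$. It therefore suffices to bound each of the three factors uniformly in terms of $||\Psi||_{p,\underline{e}}$ and constants depending only on $p$.

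The first factor is immediate from the definition: $||\nabla^a \Psi_{IJ}||_{C^0} \leq ||\Psi_{IJ}||_{C^a} \leq ||\Psi||_{p,\underline{e}}$ for all $a \leq p$. For the second factor, note that $e^J = e_{j_1}\cdots e_{j_d}$ is a symmetric product, so the Leibniz rule expresses $\nabla^b e^J$ as a sum of symmetric products of factors $\nabla^{b_l} e_{j_l}$ with $b_l \leq b \leq p$. The bound $||\nabla^k e_i|| \leq C(p)$ from \eqref{2011eq2} then yields a uniform bound on $||\nabla^b e^J||$ depending only on $p$, $d$, and $r$.

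The main obstacle is the third factor, the dual frame. Introduce the Gram matrix $G_{IJ} = \langle e^I, e^J \rangle_{\textrm{Sym}^d h_0}$. Each entry is a universal polynomial in $(h_0)_{ij} = \langle e_i, e_j \rangle_{h_0}$. Differentiating with respect to the Chern connection of $(E, h_0)$, so that $\nabla h_0 = 0$ and $\bar{\partial} e_i = 0$, expresses $\nabla^k(h_0)_{ij}$ in terms of $\nabla^l e_i$ for $l \leq k$; combined with \eqref{2011eq2}, this gives a $C^p$ bound on every $G_{IJ}$. The two-sided bound on $||[(h_0)_{ij}]||_{\mathrm{op}}$ controls the eigenvalues of $G$ from above and below, since the Gram matrix of the symmetric frame inherits two-sided bounds from those of $[(h_0)_{ij}]$. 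Consequently $G^{-1}$ exists and is uniformly bounded, and differentiating $G\,G^{-1} = I$ iteratively yields a $C^p$ bound on $G^{-1}$. The Chern connection of $\textrm{Sym}^d E$ in the frame $\{e^I\}$ has connection matrix $\partial G \cdot G^{-1}$, while the induced connection on $(\textrm{Sym}^d E)^*$ has connection matrix $-(\partial G \cdot G^{-1})^T$; iterating $\nabla$ on $(e^I)^{\vee}$ therefore produces a uniform bound on $||\nabla^c (e^I)^{\vee}||$ for $c \leq p$.

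Combining the three factor estimates through Leibniz yields $||\nabla^p \Psi||_{C^0} \leq C\, ||\Psi||_{p,\underline{e}}$, where $C$ depends only on $p$ (and on the fixed $d$, $r$, together with the constants in \eqref{2011eq2}). Summing over $0 \leq j \leq p$ gives the desired estimate. The crux is the dual-frame estimate: it is the only place where the algebraic structure of $\textrm{Sym}^d E$ and the two-sided bound on $[\langle e_i, e_j\rangle_{h_0}]$ must both be used, and propagating the inverse bound $G^{-1}$ to all orders of derivatives is what makes the argument non-trivial.
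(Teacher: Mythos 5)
Your proof is correct, but it takes a genuinely different route from the paper's. The paper never introduces the dual frame $(e^I)^{\vee}$ or the inverse Gram matrix: it applies $\nabla^p$ to the identity $\Psi e^I=\sum_J \Psi_{IJ}e^J$, uses Leibniz to isolate $(\nabla^p\Psi)e^I$ as a combination of $\nabla^i\Psi\,\nabla^{p-i}e^I$ (with $i<p$) and $\nabla^i\Psi_{IJ}\,\nabla^{p-i}e^J$, bounds everything using \eqref{2011eq2}, and closes the argument with the recursion $||\Psi||_{C^p}\leq C\big(||\Psi||_{C^{p-1}}+||\Psi||_{p,\underline{e}}\big)$ and induction on $p$. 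You instead expand $\Psi=\sum_{I,J}\Psi_{IJ}\,e^J\otimes(e^I)^{\vee}$ and pay the price of controlling $\nabla^c(e^I)^{\vee}$, which forces you to propagate $C^p$ bounds on $G^{-1}$ by differentiating $G\,G^{-1}=I$; this is the extra work the paper's induction sidesteps. What your version buys is a closed-form, non-inductive bound and an explicit identification of where the two-sided bound on $[\langle e_i,e_j\rangle_{h_0}]$ enters (invertibility of the symmetric Gram matrix); note that the paper also needs that nondegeneracy, only implicitly, when passing from a bound on $(\nabla^p\Psi)e^I$ for all $I$ to a bound on the operator norm of $\nabla^p\Psi$. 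Your observation that the Gram matrix of $\{e^I\}$ inherits two-sided bounds from that of $\{e_i\}$ (via $\mathrm{Sym}^d$ of the change-of-basis matrix) is the one genuinely nontrivial algebraic input, and it is correct. Both arguments are valid; yours is more self-contained per step, the paper's is shorter.
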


\begin{proof}

We have $$\Psi e^I=\sum_{J} \Psi_{IJ} e^J.$$ Applying $\nabla^p$, we get

$$\sum_{i=0}^p\nabla^i \Psi \nabla^{p-i} e^I=\sum_{J} \sum_{i=0}^p\nabla^i\Psi_{IJ} \nabla^{p-i}e^J.$$
This implies that $$(\nabla^p \Psi) e^I= -\sum_{i=0}^{p-1}\nabla^i \Psi \nabla^{p-i} e^I+\sum_{J} \sum_{i=0}^p\nabla^i\Psi_{IJ} \nabla^{p-i}e^J.$$ Therefore,

\begin{align*}||(\nabla^p \Psi) e^I| &\leq \sum_{i=0}^{p-1}||\nabla^i \Psi|| ||\nabla^{p-i} e^I||+\sum_{J} \sum_{i=0}^p||\nabla^i\Psi_{IJ}|| ||\nabla^{p-i}e^J||\\ & \leq C(p)\Big(\sum_{i=0}^{p-1}||\nabla^i \Psi|| +\sum_{J} \sum_{i=0}^p||\Psi_{IJ}||_{C^p} \Big)\\& \leq CC(p) \Big(\sum_{i=0}^{p-1}||\nabla^i \Psi|| +|\Psi||_{p, \underline{e}} \Big).\end{align*}
Thus $$||\Psi||_{C^p} \leq C \Big(||\Psi||_{C^{p-1}}+ ||\Psi||_{p, \underline{e}}\Big).$$
Now we can conclude the lemma by induction on $p$.
\end{proof}

\begin{proof}[Proof of Thm.\ \ref{2011thm 1}]
applying Proposition \ref{2011prop 2} and Theorem \ref{2011thm 2}, we have
\begin{align*}
||\Psi_{i}||_{C^p} &=||\Psi(f_{i})||_{C^p} \leq C ||f_{i}||_{p, \underline{e}, h}\\& \leq C \max \Big ( \Big | \Big | \frac{\nu_{0}^m}{\omega ^m} \Big |\Big|_{C^0}^p , 1\Big )  ||h||_{C^{p+2}}^j \Big (  \sum_{i=0}^m ||\omega||_{C^p}^i \Big)^{p+1}.
\end{align*}

This concludes the proof.

\end{proof}

Let $\Phi \in \Gamma(\textrm{End} (\textrm{Sym}^d E)) $ be hermitian with respect to  $\textrm{Sym}^d h$. As in section $3$, we can define hermitian metrics $h_{t}(\Phi)=\textrm{Sym}^d h(I+t\Phi)$ and $g_{k}(\Phi)=\widehat{h_{k^{-1}}(\Phi)}$ on $\textrm{Sym}^d E$ and $\mathcal{O}_{\mathbb{P}E^*}(d)$. In the rest of this section $C$ denotes a constant depends only on $p, d, m, r, \nu_{0} $ and $h_{0}$ that might change from line to line.

\begin{lem}\label{2011lem5}

There exists a constant $C$ depends only on $p, d, m, r, \nu_{0}$ and $h_{0}$ such that $$\Big| \Big|\frac{g_{k}(\Phi)}{g}-1\Big | \Big |_{C^p} \leq C ||\Phi||_{C^p} ||h||_{C^p}^d (\inf_{x \in X} ||h(x)||_{h_{0}(x)})^{-2d}$$ for $k \gg ||\Phi||_{C^p}$.

\end{lem}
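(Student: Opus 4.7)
The plan is to compute the ratio $g_k(\Phi)/g$ on each fibre of $\pi\colon\mathbb{P}E^*\to X$ explicitly and then bound it in $C^p$ by tracking derivatives through the resulting rational expression.

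Since both metrics arise from the formula $|\hat s|_{\widehat H}^2([v]) = |v^{\otimes d}(s)|^2/\|v_d\|_H^2$, the $|v^{\otimes d}(s)|^2$ factor cancels in the ratio and one obtains
$$
\frac{g_k(\Phi)}{g}([v]) \;=\; \frac{\|v_d\|_{\textrm{Sym}^d h,\,\mathrm{dual}}^2}{\|v_d\|_{\textrm{Sym}^d h (I+k^{-1}\Phi),\,\mathrm{dual}}^2}.
$$
Write $P_k = I+k^{-1}\Phi$. Choosing a local holomorphic frame $(e_1,\dots,e_r)$ of $E$ satisfying \eqref{2011eq2} together with the induced frame $\{e^I\}_{|I|=d}$ of $\textrm{Sym}^d E$, let $H$ be the matrix of $\textrm{Sym}^d h$ and $\xi$ the coordinate vector of $v_d$ in the dual frame. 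The matrix of the dual metric of $\textrm{Sym}^d h\cdot P_k$ is a product of $H^{-1}$ and $P_k^{-1}$, and the identity $I - P_k^{-1} = k^{-1}P_k^{-1}\Phi$ lets us write
$$
\frac{g_k(\Phi)}{g} - 1 \;=\; \frac{k^{-1}\,Q_k(\Phi,H^{-1},\xi)}{\|v_d\|_{\textrm{Sym}^d h\cdot P_k,\,\mathrm{dual}}^2},
$$
where $Q_k$ is a quadratic form in $\xi$ whose coefficients are linear in $\Phi$ (modulo the factor $P_k^{-1}$). For $k\gg\|\Phi\|_{C^0}$ the operator $P_k^{-1}$ is uniformly close to $I$, so the denominator is uniformly comparable to $\langle H^{-1}\xi,\xi\rangle = \|v_d\|_{\textrm{Sym}^d h,\mathrm{dual}}^2$, a positive homogeneous quadratic form of order $d$ in the fibre coordinates $\lambda$, and the quotient is a smooth, order-zero homogeneous function on the fibre.

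The $C^p$ estimate then follows from a Leibniz-rule argument applied to the numerator $Q_k$, the denominator, and the factor $P_k^{-1}$. The entries of $H=\textrm{Sym}^d h$ are polynomials of degree $d$ in the entries of $h$, so $\|H\|_{C^p}\le C\|h\|_{C^p}^d$, and the standard formulas for derivatives of a matrix inverse yield a $C^p$ bound for $H^{-1}$ controlled by $\|H\|_{C^p}$ together with $\inf_x\|h(x)\|_{h_0(x)}$. Partial cancellation of the $H^{-1}$ factors appearing in both numerator and denominator, combined with the prefactor $k^{-1}$, keeps the bound linear in $\|\Phi\|_{C^p}$. The main bookkeeping task is to organize the Leibniz expansion so that each resulting term is absorbed into the product $C\,\|\Phi\|_{C^p}\,\|h\|_{C^p}^d\,(\inf_x\|h(x)\|_{h_0(x)})^{-2d}$; this parallels the estimates in the proof of Proposition \ref{2011prop 2}, though now in the simpler setting where no $\Lambda^j$-type operators of Lemma \ref{2011lem2} enter.
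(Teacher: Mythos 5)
Your argument is essentially the paper's: the paper computes the same fibrewise ratio by expanding $g/g_{k}(\Phi)=\sum_{i\ge 0}(-1)^{i}k^{-i}\,tr(\lambda_{d}\Phi^{i})$ as a Neumann series and bounding each homogeneous term by $C^{i}\|\Phi\|_{C^p}^{i}\|h\|_{C^p}^{d}(\inf_x\|h(x)\|_{h_0(x)})^{-2d}$ in the frame of \eqref{2011eq2}, whereas you resum that series into the resolvent identity $I-P_k^{-1}=k^{-1}P_k^{-1}\Phi$ before estimating; the content is the same. Both versions end with the same observation that the denominator (equivalently $\|g/g_k(\Phi)\|_{C^p}$) is uniformly comparable to $1$ for $k\gg\|\Phi\|_{C^p}$, so your proposal is correct.
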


\begin{proof}

A straightforward computation Shows that $$\frac{g}{g_{k}(\Phi)}= \sum_{i=0}^{\infty} (-1)^ik^{-i} tr(\lambda_{d}\Phi^i).$$
Let $e_{1}, \dots e_{r}$ be a holomorphic local frame for $E$ satisfying \eqref{2011eq2}. Suppose that $\Phi e^I=\sum_{J}\phi_{IJ}e^{J}$, then
$$tr(\lambda_{d}\Phi^{\alpha})= \frac{\sum \lambda^{I_{0}} \overline{\lambda^{K}} h^{I_{1}K}\phi_{I_{1}I_{2}}\dots \phi_{I_{\alpha}I_{0}} }{(\sum_{ij}h^{ij}\lambda_{i}\overline{\lambda_{j}})^d}.$$ Therefore \begin{align*}||tr(\lambda_{d}\Phi^{\alpha})||_{C^p} &\leq C^{\alpha}||\Phi||^{\alpha}_{C^p}\sup_{IJ} ||h_{IJ}||_{C^p}\sup_{IJ}||h^{IJ}||_{C^0}^2\\& \leq C^{\alpha}||\Phi||^{\alpha}_{C^p} ||h||_{C^p}^d (\inf_{x \in X} ||h(x)||_{h_{0}(x)})^{-2d}.\end{align*}

Therefore 
\begin{align*}\Big| \Big|\frac{g_{k}(\Phi)}{g}-1\Big | \Big |_{C^p} &\leq \sum_{i=1}^{\infty} k^{-i}|| tr(\lambda_{d}\Phi^i)||_{C^p}\\&\leq ||h||_{C^p}^d (\inf_{x \in X} ||h(x)||_{h_{0}(x)})^{-2d}\sum_{i=1}^{\infty} C^i k^{-i}|| \Phi||_{C^p}^i\\&=||h||_{C^p}^d (\inf_{x \in X} ||h(x)||_{h_{0}(x)})^{-2d}\frac{Ck^{-1}||\Phi||_{C^p}}{1-Ck^{-1}||\Phi||_{C^p}}. \end{align*}
This concludes the proof since $$\Big| \Big|\frac{g}{g_{k}(\Phi)}\Big | \Big |_{C^p}\sim 1 \,\,\,\, \textrm{for }  \,\,\,  k \gg ||\Phi||_{C^p}.$$

\end{proof}

\begin{lem}\label{2011lem6}

 Let $H$ be a hermitian metric on $\textrm{Sym}^d E$ and $g= \widehat{H}$
 be the induced metric on $\mathcal{O}_{\mathbb{P}E^*}(d)$. Let $\chi_{1}$ and $\chi_{2}$ be smooth functions on $\mathbb{P}E^*$. Define hermitian metrics $H_{1}$ and $H_{2}$ on $\textrm{Sym}^d E$ as follows: $$\langle s, t      \rangle_{H_{i}}= \int_{\mathbb{P}E_{x}^*} \chi_{i  } \langle \hat{s}, \hat{t}      \rangle_{g} \omega_{g}^{r-1},$$ for any $x \in M$ and $s, t \in \textrm{Sym}^d E_{x}$. There exists a constant $C$ depends only on  $p, d, m, r, \nu_{0}$ and $h_{0}$ such that $$||H_{1}-H_{2}||_{C^p} \leq C ||\chi_{1}-\chi_{2}||_{C^p} ||h||_{C^p}^r  (\inf_{x \in X} ||h(x)||_{h_{0}(x)})^{-2}  .$$

\end{lem}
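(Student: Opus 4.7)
The proof should parallel the estimates in Theorem \ref{2011thm 1} and Proposition \ref{2011prop 2}, and proceed directly by differentiation in a carefully chosen local frame. The key observation is that $H_{1}-H_{2}$ depends linearly on $\chi := \chi_{1}-\chi_{2}$, so it is enough to bound the fiber integral operator
$$
\chi \longmapsto \int_{\mathbb{P}E_{x}^{*}} \chi \,\langle \hat{s},\hat{t}\rangle_{g}\, \omega_{g}^{r-1}
$$
in $C^{p}$-norm in terms of $\|\chi\|_{C^{p}}$ and controlled quantities involving $h$. I would fix, around each $x\in X$, a local holomorphic frame $\underline{e}=(e_{1},\dots,e_{r})$ of $E$ satisfying the bounds of Lemma \ref{2011lem1} and work with the induced frame $\{e^{I}:|I|=d\}$ of $\textrm{Sym}^{d}E$. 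Since $H = \textrm{Sym}^{d} h$ gives $g=\widehat{\textrm{Sym}^{d} h}$, the formula from Section~2 for $\langle \widehat{e^{I}},\widehat{e^{J}}\rangle_{g}$ yields
$$
(H_{1}-H_{2})_{IJ}(x) \,=\, \int_{\mathbb{P}E_{x}^{*}} \chi([\lambda])\cdot \frac{\lambda^{I}\bar{\lambda}^{J}}{\Big(\sum_{i,j} h^{ij}(x)\lambda_{i}\bar{\lambda}_{j}\Big)^{d}} \cdot \frac{\omega_{g}^{r-1}}{(r-1)!}.
$$

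The main step is to apply $\nabla^{p}$ and use the Leibniz rule. Each derivative falls on one of three pieces: $\chi$, which contributes at most $\|\chi\|_{C^{p}}$; the rational factor $(\sum h^{ij}\lambda_{i}\bar{\lambda}_{j})^{-d}$, whose derivatives are handled by the identity $\nabla h^{-1} = -h^{-1}(\nabla h)h^{-1}$ and are controlled by powers of $\|h\|_{C^{p}}$ together with negative powers of $\inf_{x}\|h(x)\|_{h_{0}(x)}$; or the Fubini-Study volume form $\omega_{g}^{r-1}$, whose derivatives introduce polynomial expressions in $\nabla^{k}h$ and the curvature $F_{h}$, controlled in the same manner as in the proof of Proposition \ref{2011prop 2} (via Lemma \ref{2011lem2}). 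Once all derivatives are pulled outside, the remaining fiber integrals reduce to uniformly bounded ones of the form
$$
\int_{\mathbb{P}^{r-1}} \frac{\lambda^{I}\bar{\lambda}^{J}}{\Big(\sum h^{ij}\lambda_{i}\bar{\lambda}_{j}\Big)^{d+k}}\,\omega_{\mathrm{FS}}^{r-1},
$$
which, by passing to an $h$-orthonormal diagonalization and rescaling, can be bounded by a combinatorial constant depending only on $r$, $d$, and $k$.

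The principal obstacle is the bookkeeping of the exponents of $\|h\|_{C^{p}}$ and $(\inf_{x}\|h(x)\|_{h_{0}(x)})^{-1}$ produced at each step: naively, each appearance of the denominator to the power $d$ and each Leibniz application seem to force large powers of $\|h\|^{-1}$, while the claimed exponents are only $r$ and $-2$. The gain comes from the cancellation between the denominator and the fiber volume form $\omega_{g}^{r-1}$: a change of variables to an $h(x)$-orthonormal frame on each fiber absorbs most of the $h$-dependence into the measure, leaving only the residual factors that account for the stated exponents. Once this reduction is carried out and the resulting uniformly bounded fiber integrals are collected, the Leibniz estimates assemble into the claimed bound.
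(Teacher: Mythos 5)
Your strategy is the one the paper uses: fix a local holomorphic frame satisfying the bounds \eqref{2011eq2} of Lemma \ref{2011lem1}, write the matrix entries of $H_1-H_2$ as explicit fiber integrals in the induced frame $\{e^I\}$, and estimate by differentiating in $x$ under the integral sign. The paper's execution is shorter because it first records the closed-form identity
$\langle \widehat{e^{I}}, \widehat{e^{J}}\rangle_{g}\,\omega_{g}^{r-1}
= c\,\det(h_{ij})\,\lambda^{I}\overline{\lambda}^{J}\,
\bigl(\sum h^{ij}\lambda_{i}\overline{\lambda}_{j}\bigr)^{-(r+d)}\, d\lambda\wedge d\overline{\lambda}$,
so that the entire $x$-dependence of the integrand sits in $\chi$, $\det(h_{ij})$ and $h^{ij}$ against the fixed Euclidean measure on the affine chart; in particular no curvature terms $F_h$ and no appeal to Lemma \ref{2011lem2} arise (that lemma concerns $\Lambda^j$ of forms on the base and plays no role here, since only the restriction of $\omega_g^{r-1}$ to the fiber enters). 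The Leibniz bookkeeping then collapses to $\|\det(h_{ij})\|_{C^p}\cdot\|h^{ij}\|_{C^0}$, which is exactly the source of the exponents $r$ and $-2$. One caution about your proposed resolution of the exponent problem: the passage to an $h(x)$-orthonormal frame on each fiber is legitimate only \emph{after} all $x$-derivatives have been taken, i.e.\ as a pointwise device for bounding the residual fiber integrals at fixed $x$; if used before differentiating, the diagonalizing frame is $x$-dependent and is not controlled in $C^p$ (eigenvector fields of $h$ relative to $h_0$ need not even vary smoothly), so the "cancellation" cannot be implemented that way. With that ordering made explicit, your argument matches the paper's.
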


\begin{proof}

Let $e_{1}, \dots e_{r}$ be a holomorphic local frame for $E$ satisfying \eqref{2011eq2}. Then
$$ \langle \widehat{e^{I}}, \widehat{e^{J}}      \rangle_{g} \omega_{g}^{r-1}=d \frac{\det(h_{ij})\sum \lambda^{I} \overline{\lambda^{J}}  }{(\sum_{ij}h^{ij}\lambda_{i}\overline{\lambda_{j}})^r} d\lambda\wedge  d\overline{\lambda}.$$ Therefore

\begin{align*}    ||H_{2}-H_{1}||_{C^p} &\leq C \sum ||\int_{\textrm{Fibre}} \chi_{1} \langle \widehat{e^{I}}, \widehat{e^{J}}      \rangle_{g} \omega_{g}^{r-1}-\int_{\textrm{Fibre}} \chi_{2} \langle \hat{E_{i}}, \hat{E_{j}}      \rangle_{g} \omega_{g}^{r-1}||_{C^p} \\&\leq C ||\chi_{1}-\chi_{2}||_{C^p} ||\det(h_{ij})||_{C^p}||h^{ij}||_{C^0}\\& \leq  C ||\chi_{1}-\chi_{2}||_{C^p} ||h||_{C^p}^r  (\inf_{x \in X} ||h(x)||_{h_{0}(x)})^{-2}.\end{align*}


\end{proof}

\begin{proof}[Proof of Theorem  \ref{2011thm 1a}]

In this proof $C_{l,l^{\prime}}$ denotes a constant depends only on $p, d, m, r, l, l^{\prime} \nu_{0} $ and $h_{0}$ that might change
from line to line. Lemma \ref{2011lem5} implies that $$\Big| \Big|\frac{g_{k}(\Phi)}{g}-1\Big | \Big |_{C^{p+2}} \leq ||\Phi||_{C^{p+2}} ||h||_{C^{p+2}}^d (\inf_{x \in X} ||h(x)||_{h_{0}(x)})^{-2d} \leq Cl^{d+1}l^{\prime2}k^{-1}$$ as long as $k \gg l$. Hence 
\begin{align*} ||(\omega_{g_{k}(\Phi)}+k\omega)- (\omega_{g}+k\omega) ||_{C^p} &= \big |\big| i\bar{\partial}\partial \log \frac{g_{k}(\Phi)}{g} \big |\big|_{C^p}\leq  \big |\big|  \log \frac{g_{k}(\Phi)}{g} \big |\big|_{C^{p+2}}\\&\leq - \log  \Big (1-C   \big| \big|\frac{g_{k}(\Phi)}{g}-1\big | \big |_{C^{p+2}}   \Big)\\& \leq C  \big| \big|\frac{g_{k}(\Phi)}{g}-1\big | \big |_{C^{p+2}} \leq C_{l,l^{\prime}}k^{-1}.\end{align*}
This implies that $$\Big| \Big|\frac{d\mu_{g,k,\Phi}-d\mu_{g,k}}{d\mu_{g,k}}\Big | \Big |_{C^p} \leq C_{ l,l^{\prime}}k^{-1}.$$
Hence  \begin{align*} \Big| \Big|\frac{g_{k}(\Phi)}{g}\frac{d\mu_{g,k,\Phi}}{d\mu_{g}}-\frac{d\mu_{g,k,\Phi}}{d\mu_{g}}\Big | \Big |_{C^p}& \leq C \Big| \Big|\frac{g_{k}(\Phi)}{g}-1\Big | \Big |_{C^p}\Big| \Big|\frac{d\mu_{g,k,\Phi}}{d\mu_{g,k}}\Big | \Big |_{C^p}\\&+\Big| \Big|\frac{d\mu_{g,k,\Phi}}{d\mu_{g,k}}-\frac{d\mu_{g,k,\Phi}}{d\mu_{g,k}}\Big | \Big |_{C^p} \Big| \Big|\frac{g_{k}(\Phi)}{g}\Big | \Big |_{C^p}\Big| \Big|\frac{d\mu_{g,k}}{d\mu_{g}}\Big | \Big |_{C^p}\\& \leq C_{l,l^{\prime}}k^{-1}.\end{align*}
Note that  $$ \Big| \Big|\frac{d\mu_{g,k}}{d\mu_{g}}\Big | \Big |_{C^p} \sim 1  \,\,\, \textrm{for} \,\,\,\, k \gg 0. $$ 
Now, applying Lemma \ref{2011lem6} to the functions $\chi_{1}=\frac{d\mu_{g,k}}{d\mu_{g}}$ and $\chi_{2}=\frac{d\mu_{g,k,\Phi}}{d\mu_{g,k}}\frac{g_{k}(\Phi)}{g}$, we get $$||h(k,\Phi)-h(k)||_{C^p} \leq C_{ l,l^{\prime}} ||\chi_{1}-\chi_{2}||_{C^p} \leq C_{ l,l^{\prime}}k^{-1}.$$
This concludes the proof.

\end{proof}

\section{Proof of the main theorem}
In this section, we prove Thm.\ \ref{thm2}. In order to do that,
we want to apply \cite[Theorem 4.6.]{S}. Hence, we need to
construct a sequence of almost balanced metrics on
$\mathbb{P}E^*,\mathcal{O}_{\mathbb{P}E^*}(d)\otimes L^{\otimes
k}$. In order to apply \cite[Theorem 4.6.]{S}, we also need the following.

\begin{prop} \label{2011prop 3}(\cite[Prop.\ 7.1 ]{S})
Let $E $ be a holomorphic vector bundle over a compact K\"ahler
manifold $X$. Suppose that $X$ has no nonzero holomorphic vector
fields. If $E$ is stable, then  $\mathbb{P}E^*$ has no nontrivial
holomorphic vector fields.
\end{prop}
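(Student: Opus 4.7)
The plan is to analyze holomorphic vector fields on $\mathbb{P}E^*$ by separating their vertical and horizontal components via the projection $\pi:\mathbb{P}E^*\to X$, and to identify vertical vector fields with sections of a bundle that is constrained by the stability of $E$.

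First I would write down the relative tangent sequence
$$0\longrightarrow T_{\mathbb{P}E^*/X}\longrightarrow T_{\mathbb{P}E^*}\longrightarrow \pi^*T_X\longrightarrow 0$$
and take global sections. Since $\pi_*\mathcal{O}_{\mathbb{P}E^*}=\mathcal{O}_X$, the projection formula gives $H^0(\mathbb{P}E^*,\pi^*T_X)=H^0(X,T_X)$, which is zero by hypothesis. Therefore any holomorphic vector field on $\mathbb{P}E^*$ is vertical, i.e. lies in $H^0(\mathbb{P}E^*, T_{\mathbb{P}E^*/X})=H^0(X,\pi_*T_{\mathbb{P}E^*/X})$.

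Next I would compute $\pi_*T_{\mathbb{P}E^*/X}$ using the relative Euler sequence
$$0\longrightarrow \mathcal{O}_{\mathbb{P}E^*}\longrightarrow \pi^*E^*\otimes \mathcal{O}_{\mathbb{P}E^*}(1)\longrightarrow T_{\mathbb{P}E^*/X}\longrightarrow 0.$$
Applying $\pi_*$ and using $\pi_*\mathcal{O}_{\mathbb{P}E^*}(1)=E$ together with the projection formula yields
$$0\longrightarrow \mathcal{O}_X\longrightarrow E^*\otimes E\longrightarrow \pi_*T_{\mathbb{P}E^*/X}\longrightarrow 0,$$
where the first map is the inclusion of scalars $1\mapsto \mathrm{Id}_E$. (The vanishing of $R^1\pi_*\mathcal{O}$ for the projective bundle guarantees exactness on the right.) Taking the trace gives a splitting, so $\pi_*T_{\mathbb{P}E^*/X}\cong \mathrm{End}_0(E)$, the bundle of trace-free endomorphisms.

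Finally, I would invoke the fact that a Mumford stable bundle is simple, i.e. $H^0(X,\mathrm{End}(E))=\mathbb{C}\cdot \mathrm{Id}_E$. This forces $H^0(X,\mathrm{End}_0(E))=0$, which combined with the previous steps gives $H^0(\mathbb{P}E^*,T_{\mathbb{P}E^*})=0$. There is no real obstacle here; the only thing to be careful about is the identification $\pi_*T_{\mathbb{P}E^*/X}\cong\mathrm{End}_0(E)$, which depends on the conventions for $\mathbb{P}E^*$ and $\mathcal{O}_{\mathbb{P}E^*}(1)$ fixed earlier in the paper (in Section 2, where $H^0(\mathbb{P}V^*,\mathcal{O}(d))=\mathrm{Sym}^d V$ is used). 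Once the convention is pinned down the rest is a direct consequence of the relative Euler sequence and simplicity of stable bundles.
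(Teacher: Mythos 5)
Your argument is correct, and it is essentially the argument behind the cited result \cite[Prop.\ 7.1]{S}: the present paper gives no proof of its own but imports the proposition from \cite{S}, where it is established exactly by combining the relative tangent sequence (to kill horizontal components using $H^0(X,T_X)=0$) with the relative Euler sequence (to identify vertical fields with trace-free endomorphisms of $E$, which vanish by simplicity of a stable bundle). Your handling of the convention $\pi_*\mathcal{O}_{\mathbb{P}E^*}(1)=E$ is consistent with Section 2 of the paper, so no adjustments are needed.
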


\begin{proof}[Proof of Thm.\ \ref{thm2}]
Since Chow stability is equivalent to the existence of balanced
metric, it suffices to show that
$(\mathbb{P}E^*,\mathcal{O}_{\mathbb{P}E^*}(d)\otimes \pi^*L^{k})$
admits balanced metric for $k \gg 0.$ Fix a positive integer $q$.
From now on we drop all indexes $q$ for simplicity. Let $\omega_{\infty}$ be a constant scalar curvature K\"ahler metric on $X$ and $\sigma_{\infty}$ be a hermitian metric on $L$ such that $i\bar{\partial}\partial \log \sigma_{\infty}=\omega_{\infty}$. Define $\sigma_{k}=\sigma_{\infty}e^{\sum_{j=1}^q k^{-j}\eta_{j}},$ where $\eta_{j}$'s are given by Thm. \ref{thmH3}. Therefore $i\bar{\partial}\partial \log \sigma_{k}=\nu_{k,q}$. For the rest of the proof, we denote $\nu_{k,q}$ by $\nu_{k}$. Let $t_{1},...,t_{N}$ be an
orthonormal basis for
$H^{0}(\mathbb{P}E^*,\mathcal{O}_{\mathbb{P}E^*}(d)\otimes L^{k})$
with respect to $L^2(g_{k}\otimes \sigma_{k}^{\otimes k},
\frac{(\omega_{g_{k}}+k\nu_{k})^{m+r-1}}{(m+r-1)!}).$ Thus,
Cor.\ \ref{corH3} implies $$\sum |t_{i}|^2_{g_{k}\otimes
\sigma_{k}^{\otimes k}}=\frac{N_{k}}{V_{k}}(1+\epsilon_{k}).$$
Define $g_{k}^{'}=\frac{V_{k}}{N_{k}}(1+\epsilon_{k})^{-1}g_{k}$.
We have $\displaystyle\sum |t_{i}|^2_{g_{k}^{'}\otimes \sigma_{k}^{\otimes
k}}=1$. This implies that the metric $g_{k}^{'}$ is the
Fubini-Study metric on $\mathcal{O}_{\mathbb{P}E^*}(d)\otimes
L^{k}$ induced by the embedding $\iota_{\underline{t}}:
\mathbb{P}E^* \rightarrow \mathbb{P}^{N-1},$ where
$\underline{t}=(t_{1},...,t_{N})$. We prove that this sequence of
embedding is almost balanced of order $q$, i.e
$$\int_{\mathbb{P}E^*} \langle t_{i},
t_{j}\rangle_{g_{k}^{'}\otimes \sigma_{k}^{\otimes
k}}\frac{(\omega_{g_{k}^{'}}+k\nu_{k})^{m+r-1}}{(m+r-1)!}=D^{(k)}\delta_{ij}+M_{ij},$$
where $M^{(k)}=[M_{ij}]$ is a trace free hermitian matrix,
$D^{(k)}=\frac{V_{k}}{N_{k}} \rightarrow C_{r,d}$ as $k \rightarrow \infty$ and
$||M^{(k)}||_{\textrm{op}}=O(k^{-q-1}).$ We have
\begin{align*}M_{ij}^{(k)}&=\int_{\mathbb{P}E^*} \langle
t_{i},t_{j}\rangle_{g_{k}^{'}\otimes\sigma_{k}^{\otimes
k}}\frac{(\omega_{g_{k}^{'}}+k\nu_{k})^{m+r-1}}{(m+r-1)!}-\frac{V_{k}}{N_{k}}\int_{\mathbb{P}E^*}\langle
t_{i}, t_{j}\rangle_{g_{k}\otimes \sigma_{k}^{\otimes
k}}\frac{(\omega_{g_{k}}+k\nu_{k})^{m+r-1}}{(m+r-1)!}\\&=\frac{V_{k}}{N_{k}}
\int_{\mathbb{P}E^*}\langle t_{i}, t_{j}\rangle_{g_{k}\otimes
\sigma_{k}^{\otimes
k}}(f_{k}(1+\epsilon_{k})^{-1}-1)\frac{(\omega_{g_{k}}+k\nu_{k})^{m+r-1}}{(m+r-1)!},\end{align*}
where
$(\omega_{g_{k}^{'}}+k\nu_{k})^{m+r-1}=f_{k}(\omega_{g_{k}}+k\nu_{k})^{m+r-1}$.
By a unitary change of basis, we may assume without loss of
generality that the matrix $M^{(k)}$ is diagonal. Thus
$$||M^{(k)}||_{\textrm{op}} \leq
\frac{V_{k}}{N_{k}}||f_{k}(1+\epsilon_{k})^{-1}-1||_{L^{\infty}}.$$
On the other hand,
\begin{align*}||\omega_{g_{k}^{'}}-\omega_{g_{k}}||_{C^0(\omega_{0})}&=||\overline{\partial}\partial
\log(1+\epsilon_{k})||_{C^0(\omega_{0})}\leq
||\log(1+\epsilon_{k})||_{C^2(\omega_{0})}\\
&\leq
-\log(1-C||\epsilon_{k}||_{C^2(\omega_{0})})=
O(k^{-q-1}).
\end{align*}
 Therefore,
\begin{align*}||f_{k}-1||_{\infty}=\Big|\frac{\omega^{m+r-1}_{g_{k}^{'}}-\omega_{g_{k}}^{m+r-1}}{\omega_{g_{k}}^{m+r-1}}\Big|&=\Big|\frac{\omega^{m+r-1}_{g_{k}^{'}}-\omega_{g_{k}}^{m+r-1}}{\omega_{0}^{m+r-1}}\frac{\omega_{0}^{m+r-1}}{\omega_{g_{k}}^{m+r-1}}\Big|\\ &\leq Ck^{-q-1}\Big|\frac{\omega_{0}^{m+r-1}}{\omega_{g_{k}}^{m+r-1}}\Big|.
\end{align*}
This implies  that $||f_{k}-1||_{\infty}\leq Ck^{-q-1}$, since
$\displaystyle\Big|\frac{\omega_{0}^{m+r-1}}{\omega_{g_{k}}^{m+r-1}}\Big|$ is
bounded. Hence
$||f_{k}(1+\epsilon_{k})^{-1}-1||\leq C^{'}k^{-q-1}$. Therefore
$||M^{(k)}||_{\textrm{op}} =O(k^{-q-1})$.
Prop. \ref{2011prop 3} implies that $\mathbb{P}E^*$ has no
nontrivial holomorphic vector fields. On the other hand $$||\log (g_{k}\otimes \sigma_{k}^{\otimes k})-\log (\widehat{h_{HE}}\otimes \sigma_{\infty}^{\otimes k})||_{C^{a+2}(\widetilde{\omega}_{0})}=O(1).$$Therefore, applying \cite[Theorem 4.6.]{S} and \eqref{neq6} conclude the proof. 
\end{proof}


\end{document}